\newcommand{\multiline}[1]{%
  \begin{tabularx}{\dimexpr\linewidth-\ALG@thistlm}[t]{@{}X@{}}
    #1
  \end{tabularx}
}
\newcommand{\norm}[1]{\left\|#1\right\|}
\newcommand{\operator}[1]{\mathsf{#1}}
\newcommand{\A}{\operator{A}}
\newcommand{\F}{\operator{F}}   
\newcommand{\R}{\operator{R}}
\newcommand{\J}{\operator{J}}
\renewcommand{\H}{\operator{H}}
\newcommand{\T}{\operator{T}}
\newcommand{\G}{\operator{G}}
\renewcommand{\d}{\operator{d}}
\newcommand{\dt}{\,\d t}
\newcommand{\dx}{\,\d\bm x}
\newcommand{\jmp}[1]{\left\llbracket#1\right\rrbracket}
\newcommand{\dprod}[1]{\left<#1\right>}
\newcommand{\rec}[1]{\widetilde{u}^{#1}}
\newcommand{\un}[1]{{u}_N^{#1}}
\renewcommand{\u}{\mathfrak{u}}
\newcommand{\muf}[1]{\mu\left(\left|\nabla #1 \right|^2\right)}
\newcommand{\twon}[2]{\norm{#1}_{L^2(#2)}}
\newcommand{\diam}[1]{\mathrm{diam}({#1})}
\newcommand{\abd}{\beta}
\newcommand{\aco}{\alpha}
\newcommand{\Flc}{L_\operator{F}}
\newcommand{\LFp}{L_{\F'}}
\newcommand{\dpa}{\delta}
\newcommand{\Fsm}{\nu}
\newcommand{\coc}{C_\mathsf{I}}
\newcommand{\fpbd}{\beta_{\F'}}
\newcommand{\fpco}{\alpha_{\F'}}
\newcommand{\tol}{\epsilon_{\textrm{tol}}}
\newtheorem{theorem}{Theorem}[section]
\newtheorem{lemma}[theorem]{Lemma}
\newtheorem{proposition}[theorem]{Proposition}
\theoremstyle{definition}
\newtheorem{remark}[theorem]{Remark}
\title[Adaptive ILG Methods for Nonlinear Problems]{Adaptive Iterative Linearization Galerkin Methods\\ for Nonlinear Problems}
\author[P.~Heid \and T.~P.~Wihler]{Pascal Heid \and Thomas P.~Wihler}
\address{Mathematics Institute, University of Bern, CH-3012 Switzerland}
\email{pascal.heid@math.unibe.ch \and wihler@math.unibe.ch}
\thanks{The authors acknowledge the financial support of the Swiss National Science Foundation under grant no. 200021\underline{\phantom{x}}182524}
\begin{document}

\begin{abstract}
A wide variety of (fixed-point) iterative methods for the solution of nonlinear equations (in Hilbert spaces) exists. In many cases, such schemes can be interpreted as iterative \emph{local linearization} methods, which, as will be shown, can be obtained by applying a suitable preconditioning operator to the original (nonlinear) equation. Based on this observation, we will derive a unified abstract framework which recovers some prominent iterative schemes. In particular, for Lipschitz continuous and strongly monotone operators, we derive a general convergence analysis. Furthermore, in the context of numerical solution schemes for nonlinear partial differential equations, we propose a combination of the iterative linearization approach and the classical Galerkin discretization method, thereby giving rise to the so-called \emph{iterative linearization Galerkin (ILG)} methodology. Moreover, still on an abstract level, based on two different elliptic reconstruction techniques, we derive a posteriori error estimates which separately take into account the discretization and linearization errors. Furthermore, we propose an adaptive algorithm, which provides an efficient interplay between these two effects. In addition, the ILG approach will be applied to the specific context of finite element discretizations of quasilinear elliptic equations, and some numerical experiments will be performed.
\end{abstract}

\keywords{%
Numerical solution methods for nonlinear PDE,
monotone problems,
fixed point iterations,
linearization schemes,
Ka\v{c}anov method,
Newton method,
Galerkin discretizations,
adaptive finite element methods,
a posteriori error estimation%
}

\subjclass[2010]{35J62, 47J25, 47H05, 47H10, 49M15, 65J15, 65N30}

\maketitle

\section{Introduction}

The aim of this paper is to establish a general (adaptive) \emph{iterative linearization Galerkin (ILG)} framework for the numerical solution of nonlinear problems, with application to second-order partial differential equations (PDE) in divergence form. To set the stage, we consider a real Hilbert space $X$ with inner product~$(\cdot,\cdot)_X$ and induced norm denoted by~$\|\cdot\|_X$. We remark that for most of our work it is sufficient for $X$ to be a reflexive Banach space. Then, given a nonlinear operator~$\F:\,X\to X'$, we focus on the equation
\begin{equation}\label{eq:F=0}
u\in X:\qquad\F(u)=0 \quad\text{in }X',
\end{equation} 
where~$X'$ denotes the dual space of~$X$. In weak form, this problem reads
\begin{equation}\label{eq:F=0weak}
u\in X:\qquad \dprod{\F(u),v}=0\quad\text{for all } v\in X,
\end{equation}
with~$\dprod{\cdot,\cdot}$ signifying the duality pairing in~$X'\times X$. 

\subsubsection*{Iterative linearization}
The development of an \emph{iterative linearization scheme} for~\eqref{eq:F=0} is based on applying suitable preconditioning operators. More precisely, for given~$v\in X$, we introduce a linear and invertible \emph{preconditioning operator} \begin{equation}\label{eq:prec}
\A(v):\,X\to X',
\end{equation}
which allows to transform~\eqref{eq:F=0} into $\A(u)^{-1}\F(u)=0$.
This in turn gives rise to a fixed point iteration
\[
u^{n+1}=u^n-\A(u^n)^{-1}\F(u^n),\qquad n\ge 0,
\]
for an initial guess~$u^0\in X$, or equivalently,
\begin{equation}\label{eq:fp0}
u^{n+1}\in X:\qquad \A(u^n)u^{n+1}=\A(u^n)u^n-\F(u^n),\qquad n\ge 0.
\end{equation}
Letting
\begin{equation}\label{eq:f(u)}
f:X \to X',\qquad f(u):=\A(u)u-\F(u),
\end{equation}
the fixed-point iteration~\eqref{eq:fp0} takes the form of the following \emph{iterative linearization scheme}:
\begin{align}\label{eq:it}
\A(u^n)u^{n+1}=f(u^n), \qquad n \geq 0.
\end{align}
We emphasize that, given $u^n\in X$, this is a \emph{linear} problem for~$u^{n+1}\in X$.

The general iteration scheme~\eqref{eq:it} recovers some of the widely used fixed-point iterations occurring in the literature. These include, for instance, the Zarantonello iteration, the Ka\v{c}anov scheme, and the Newton method; see~Section~\ref{sec:particulariterations} for a detailed discussion. In the context of the Zarantonello iteration, the interested reader is referred to the original work~\cite{Zarantonello:60} (cf.~also~\cite{Browder:65} for a generalization), or the monographs~\cite[\S3.3]{Necas:86} and~\cite[\S25.4]{Zeidler:90}. Incidentally, the latter two references also deal with the Ka\v{c}anov approach, see \cite[\S4.5]{Necas:86} or~\cite[\S25.14]{Zeidler:90}. For the (damped and adaptive) Newton method we refer to~\cite{Deuflhard:04} for an extensive overview, or the recent works on adaptive Newton schemes~\cite{AmreinWihler:14,AmreinWihler:15,HoustonWihler:18,Potschka:16,SchneebeliWihler:11}.

\subsubsection*{Iterative linearized Galerkin approach} 
The iteration~\eqref{eq:it} generates a sequence $\{u^n\}_{n \geq 0}$ which potentially converges to a solution~$u^\star\in X$ of \eqref{eq:F=0}. In general, however, the computation of this sequence is not feasible if~$X$ is infinite- or high-dimensional. Therefore, in order to cast the iterative linearization approach described above into a computational framework, we will consider Galerkin discretizations of~\eqref{eq:it} in terms of finite-dimensional conforming subspaces~$X_N\subset X$. Then, a discrete approximation, $u_N^{n+1}\in X_N$, based on a starting guess~$u_N^0\in X_N$, is obtained by solving the \emph{linear} discrete system
\begin{equation}\label{eq:itN1}
u^{n+1}_N\in X_N:\qquad\dprod{\A(u_N^n)u_N^{n+1},v}=\dprod{f(u_N^n),v}\quad\forall v\in X_N,\qquad n\ge 0.
\end{equation}
We note that the discretization of the linearized problem~\eqref{eq:it} coincides with the linearization of the discretized problem~\eqref{eq:itNnonl}, i.e. the discretization and linearization commute; see~\cite{El-AlaouiErnVohralik:11} for a related discussion. For the resulting sequence~$\{u_N^n\}_{n\ge 0}\subset X_N$ of discrete solutions it is possible, under certain conditions, to obtain general a posteriori estimates for the difference to the exact solution, $u^\star\in X$, i.e. for~$\|u^\star-u_N^{n+1}\|_X$, $n\ge 0$. The emphasis of such bounds is that they enable the individual identification of different sources of error in the approximation process, such as, e.g., the linearization and discretization errors (further errors, not to be considered here, may result, for instance, from a linear solver iteration, see, e.g.,~\cite{ErnVohralik:13}, or from quadrature). This can be accomplished by means of two conceptionally different techniques, both of which will be presented in this work:
\begin{enumerate}[(a)]
\item The first approach is based on the assumption that a computable bound for the residual of the \emph{linear} Galerkin discretization of the form~\eqref{eq:itN1} is available. Then, applying an elliptic reconstruction technique (see, e.g., \cite{LakkisMakridakis:06,MakridakisNochetto:03}) yields a computable a posteriori error estimate for the error $\|u^\star-u_N^{n+1}\|_X$, which can be expressed in terms of a discretization and linearization contribution. In fact, these estimators can also be applied to appropriately enrich the space $X_N$, thereby leading to a new space $X_{N+1}$. We note that this approach has been applied previously in~\cite{CongreveWihler:17} in the specific context of the Zarantonello iteration scheme.
\item Alternatively, we may consider, for $n \geq 0$, a \emph{nonlinear} discrete problem which, on the one hand, features the \emph{nonlinear} operator~$\F$ from~\eqref{eq:F=0}, and, on the other hand, possesses the same solution, $\un{n+1}\in X_N$, as the \emph{linear} Galerkin formulation~\eqref{eq:itN1}. Assuming that there exists a computable bound for the residual of the discrete solution to a suitably reconstructed nonlinear problem, our analysis will show that such a bound can be exploited for the purpose of deriving an a posteriori error estimator.
\end{enumerate}

A posteriori error estimates as outlined above constitute an essential building block in the development of adaptive ILG schemes for nonlinear problems~\eqref{eq:F=0}. Indeed, recalling that such bounds allow to distinguish the different sources of error in the approximation process, the key idea of the fully adaptive ILG methodology is to provide an appropriate \emph{interplay} between the fixed-point linearization iteration and possible Galerkin space enrichments (e.g., mesh refinements for finite elements) depending on whether the discretization error or the linearization error is dominant. In this way, the goal of the adaptive ILG approach is to keep the number of fixed-point iterations at a minimum in the sense that no unnecessary iterations are performed if they are not expected to contribute a substantial reduction of the error on the actual Galerkin space. 

The simultaneous control of different sources of error in the context of adaptive finite element methods for monotone problems has been presented in a number of earlier papers. For instance, in the work~\cite{ChaillouSuri:07}, the authors have considered general linearizations of strongly monotone operators, and have derived computable a posteriori estimators for the total error (consisting of the linearization error and the Galerkin error) with identifiable components for each of the error sources. For even more sophisticated a posteriori error estimators in the specific context of the Newton linearization scheme for second-order monotone quasilinear diffusion problems we refer to~\cite{El-AlaouiErnVohralik:11,ErnVohralik:13}. The a posteriori error analysis derived in those papers includes---in addition to the discretization and linearization errors---also the algebraic linear solver error; moreover, the authors have proposed an adaptive iterative procedure, which takes into account all components of the numerical scheme in each refinement step. For a further development of that research in the context of compositional two-phase flow with nonlinear complementarity constraints we refer to \cite{MartinVohralik:19}. Furthermore, first a posteriori error estimates in the framework of the Ka\v{c}anov iteration for quasilinear diffusion problems in divergence form have been presented in~\cite{HaJeSh:97}. Later on, an adaptive iterative linearized Galerkin type approach has been introduced and discussed in~\cite{GarauMorinZuppa:11}; indeed, the convergence of the Ka\v{c}anov-Galerkin iteration is proved therein. Moreover, for semilinear second-order elliptic problems, two different linearization schemes of Ka\v{c}anov type have been analyzed in~\cite{BernardiDakroubMansourSayah:15}. Just recently, based on the ILG approach in~\cite{CongreveWihler:17}, the convergence of an adaptive Zarantonello-Galerkin iterative scheme for monotone elliptic PDE has been proved in~\cite{GantnerHaberlPraetoriusStiftner:17}. Finally, we point to the fact that the ILG methodology has been applied also to high-order (so-called $hp$)~\cite{AmreinMelenkWihler:17} and discontinuous Galerkin~\cite{HoustonWihler:18} finite element discretizations, as well as to nonlinear parabolic problems~\cite{AmreinWihler:17}.

\subsubsection*{Outline of the paper} 
In Section~\ref{sec:itlin} we state and prove a global convergence result for the unified iteration scheme~\eqref{eq:it}. In particular,  in order to provide a few examples, we apply our result to the Zarantonello, Ka\v{c}anov, and (damped) Newton methods, thereby recovering some of the well-known convergence results from the literature. Furthermore, still on an abstract level, in Section~\ref{sec:aposteriorianal} we discuss conforming Galerkin discretizations of~\eqref{eq:it}, and present general a posteriori error estimates based on the two approaches outlined in~(a) and~(b) above. On that account, we propose in Section~\ref{sec:adaptivealgorithm} a fully adaptive algorithm based on the a posteriori error estimates. More specifically, in Section~\ref{sec:scl}, we derive computable error bounds for a second-order PDE in divergence form; finally, in Section~\ref{sec:numexp}, these theoretical estimates are employed within a series of numerical experiments in the framework of the fully adaptive ILG approach.

\section{Iterative linearization} \label{sec:itlin}

The goal of this section is to prove a general convergence result for the iterative linearization iteration~\eqref{eq:it} under the condition that~$\F$ in~\eqref{eq:F=0} is a Lipschitz continuous and strongly monotone operator. Furthermore, we will review a few classical examples.

\subsection{Abstract framework}\label{sec:abstract}
For the purpose of this work, we restrict ourselves to Lipschitz continuous, strongly monotone operators~$\F$:
\begin{enumerate}[(F1)]
\item The operator $\F$ is Lipschitz continuous, i.e.~there exists a constant $\Flc>0$ such that 
\begin{align*}
\left|\dprod{\F(u)-\F(v),w}\right| \leq \Flc \norm{u-v}_X \norm{w}_X,
\end{align*}
for all $u,v,w \in X$.
\item The operator $\F$ is strongly monotone, i.e. there exists a constant $\Fsm>0$ such that 
\begin{align*}
 \Fsm \norm{u-v}_X^2 \leq \dprod{\F(u)-\F(v),u-v},
\end{align*}
for all $u,v \in X$.
\end{enumerate}
Under these conditions, the theory of strongly monotone operators implies that~\eqref{eq:F=0} possesses a unique solution~$u^\star\in X$; see, e.g., \cite[\S3.3]{Necas:86} or~\cite[\S25.4]{Zeidler:90}.

Furthermore, for given~$u\in X$, we introduce the bilinear form
\begin{equation}\label{eq:Aweak}
a(u;v,w):=\dprod{\A(u)v,w}, \qquad v,w\in X,
\end{equation}
where~$\A(\cdot)$ is the preconditioning operator from~\eqref{eq:prec}. Then,
we can write~\eqref{eq:it} in weak form: given~$u^n\in X$,  find~$u^{n+1}\in X$ such that
\begin{equation}\label{eq:itweak}
a(u^n;u^{n+1},w)=\dprod{f(u^n),w}\qquad \forall w\in X.
\end{equation}
Throughout this paper, for any~$u\in X$, we assume that the bilinear form~$a(u;\cdot,\cdot)$ is uniformly coercive and bounded. Those assumptions refer to the fact that there are two constants~$\alpha,\beta>0$ independent of $u \in X$, such that
\begin{equation}\label{eq:coercive}
a(u;v,v) \geq \aco \|v\|_X^2 \qquad\forall v \in X,
\end{equation}
and
\begin{equation}\label{eq:continuity}
a(u;v,w) \leq \abd \norm{v}_X \norm{w}_X \qquad\forall  v,w \in X,
\end{equation}
respectively. In particular, owing to the Lax-Milgram Theorem, these properties imply the well-posedness of the solution~$u^{n+1}\in X$ of the linear equation~\eqref{eq:it}, for any given~$u^n\in X$. 

\subsection{A global convergence result} 

Given the framework introduced in the previous Section~\ref{sec:abstract}, the ensuing proposition is an abstract global convergence result for the iteration scheme~\eqref{eq:it}. We note that it can be extended readily to the case where $X$ is a reflexive Banach space.

\begin{proposition} \label{pro:kacanovgeneral}
Suppose that~{\rm (F2)} (cf.~Section~\ref{sec:abstract}), \eqref{eq:coercive} and~\eqref{eq:continuity} are satisfied, and $u \mapsto a(u;u,\cdot)$ and $u \mapsto \F(u)$ are continuous mappings from $X$ into its dual space~$X'$ with respect to the weak topology on $X'$. If the sequence $\{u^{n}\}_{n \geq 0}$ defined by \eqref{eq:it} satisfies~$\|u^{n+1}-u^n\|_X\to0$ as~$n\to\infty$, then it converges to the unique solution $u^\star\in X$ of~\eqref{eq:F=0}.
\end{proposition}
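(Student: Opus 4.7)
The plan is to extract information about the residual $\F(u^n)$ from the iteration and then convert a residual estimate into an error estimate by means of strong monotonicity.

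First I would subtract $a(u^n;u^n,w)$ from both sides of the weak formulation~\eqref{eq:itweak}. Recalling the definition $f(u)=\A[u]u-\F(u)$ from~\eqref{eq:f(u)}, this rewrites the iteration as
\[
a(u^n;u^{n+1}-u^n,w)=-\dprod{\F(u^n),w}\qquad\forall w\in X.
\]
In other words, the nonlinear residual at $u^n$ is, up to a sign, the action of the preconditioning bilinear form $a(u^n;\cdot,\cdot)$ on the update $u^{n+1}-u^n$.

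Next I would apply the uniform continuity~\eqref{eq:continuity} of $a(u^n;\cdot,\cdot)$, which yields
\[
\abs{\dprod{\F(u^n),w}}\leq \abd\,\norm{u^{n+1}-u^n}_X\norm{w}_X\qquad\forall w\in X,
\]
and therefore $\norm{\F(u^n)}_{X^\star}\leq \abd\,\norm{u^{n+1}-u^n}_X$. By the standing assumption that $\norm{u^{n+1}-u^n}_X\to 0$, this forces $\norm{\F(u^n)}_{X^\star}\to 0$.

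Finally, I would translate this residual convergence into convergence of $u^n$ to the unique solution $u^\star\in X$ of~\eqref{eq:F=0}. Using~$\F(u^\star)=0$, strong monotonicity~(F2) gives
\[
\Fsm\,\norm{u^n-u^\star}_X^2\leq \dprod{\F(u^n)-\F(u^\star),u^n-u^\star}=\dprod{\F(u^n),u^n-u^\star}\leq \norm{\F(u^n)}_{X^\star}\norm{u^n-u^\star}_X,
\]
whence $\norm{u^n-u^\star}_X\leq \Fsm^{-1}\abd\,\norm{u^{n+1}-u^n}_X\to 0$. Note that this argument does not actually rely on the weak-continuity hypotheses on $u\mapsto a(u;u,\cdot)$ and $u\mapsto f(u)$; these would enter only in an alternative approach based on extracting a weakly convergent subsequence (whose boundedness one would deduce from strong monotonicity applied between $u^n$ and a fixed reference point such as $0$) and identifying its limit by passing to the limit in~\eqref{eq:itweak}. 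The only subtle point in the direct route above is the bookkeeping that turns the iteration into an identity for $\F(u^n)$; once that is in place, the coercivity constant $\aco$ is not even needed, and the whole argument is a short chain of inequalities.
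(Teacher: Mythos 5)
Your chain of inequalities is correct as far as it goes, and the key identity $a(u^n;u^{n+1}-u^n,w)=-\dprod{\F(u^n),w}$ together with \eqref{eq:continuity} does give the clean residual bound $\norm{\F(u^n)}_{X^\star}\le\abd\norm{u^{n+1}-u^n}_X$; this is essentially the same mechanism the paper uses (there it appears in the form of the Cauchy estimate $\norm{u^m-u^n}_X\le\nicefrac{\beta}{\nu}(\norm{u^{m+1}-u^m}_X+\norm{u^{n+1}-u^n}_X)$, which is your bound applied at two indices via~(F2)). You are also right that \eqref{eq:coercive} plays no quantitative role beyond the well-posedness of each linear step.

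The genuine gap is your final step and the accompanying claim that the weak-continuity hypotheses are dispensable: your argument presupposes that a solution $u^\star\in X$ of \eqref{eq:F=0} \emph{exists}, and under the hypotheses actually listed in the proposition this is not given. The proposition assumes only (F2), \eqref{eq:coercive}, \eqref{eq:continuity} and the weak continuity of $u\mapsto a(u;u,\cdot)$ and $u\mapsto f(u)$ --- notably \emph{not} (F1) --- so one cannot invoke the standard Lipschitz-plus-strong-monotonicity existence theory quoted in Section~\ref{sec:abstract}. Strong monotonicity alone guarantees uniqueness but not existence. This is precisely why the paper's proof takes the longer route: it first shows $\{u^n\}$ is Cauchy (which your residual bound also yields, via $\nu\norm{u^m-u^n}_X^2\le\dprod{\F(u^m)-\F(u^n),u^m-u^n}$), obtains a limit $u^\star$ by completeness, and then uses the weak continuity of $u\mapsto a(u;u,\cdot)$ and $u\mapsto f(u)$ to pass to the limit in $a(u^n;u^n,v)-\dprod{f(u^n),v}\to 0$ and conclude $\dprod{\F(u^\star),v}=0$ for all $v$. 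In other words, the continuity hypotheses you propose to discard are exactly what identifies the limit as a solution (equivalently, what proves existence). Your proof becomes complete either if you additionally assume that a solution exists (e.g.\ under (F1)), in which case it is shorter and even quantitative, $\norm{u^n-u^\star}_X\le\nicefrac{\abd}{\Fsm}\norm{u^{n+1}-u^n}_X$, or if you append the limit-identification step sketched in your parenthetical remark --- but then you are using the weak-continuity hypotheses after all.
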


\begin{proof}
We begin by showing that $\{u^n\}_{n \geq 0}$ is a Cauchy sequence. Indeed, by virtue of~(F2) and~\eqref{eq:f(u)}, for any $m \geq n \ge0$, it holds that
\begin{align*}
\nu \norm{u^{m}-u^n}_X^2
& \leq \dprod{\F(u^m)-\F(u^n),u^m-u^n}\\
&=\dprod{\A(u^m)u^m-f(u^m),u^m-u^n}-\dprod{\A(u^n)u^n-f(u^n),u^m-u^n}.
\end{align*}
Hence, involving~\eqref{eq:Aweak} and~\eqref{eq:itweak} gives
\begin{align*}
\nu \norm{u^{m}-u^n}_X^2
&\le a(u^{m};u^{m}-u^{m+1},u^m-u^n)-a(u^{n};u^{n}-u^{n+1},u^m-u^n).
\end{align*}
Furthermore, \eqref{eq:continuity} implies that 
\begin{align*}
\norm{u^{m}-u^n}_X \leq \frac{\beta}{\nu} \left(\norm{u^{m+1}-u^{m}}_X + \norm{u^{n+1}-u^n}_X\right)\to 0,
\end{align*}
for $n,m \to \infty$. Hence, $\{u^n\}_{n \geq 0}$ is a Cauchy sequence, and, therefore, converges to some limit $u^\star \in X$.  Next, we show that $u^\star$ is the unique solution of \eqref{eq:F=0}. Owing to~ \eqref{eq:itweak}, we notice the identity
\begin{align*}
 a(u^n;u^n,v)-\dprod{f(u^n),v}+a(u^n;u^{n+1}-u^n,v)=0\qquad\forall v\in X,
\end{align*}
for all $n \geq 0$. Here, due to~\eqref{eq:continuity}, and because $\norm{u^{n+1}-u^n}_X$ is a vanishing sequence, we observe that
\[
a(u^n;u^n,v)-\dprod{f(u^n),v}\to 0\qquad\forall v\in X,
\]
for~$n\to\infty$. Hence, by continuity of~$a$ and~$f$, we deduce that
\begin{align*}
 a(u^\star;u^\star,v)=\dprod{f(u^\star),v} \qquad \forall v \in X,
\end{align*}
i.e.~$u^\star$ is a solution of \eqref{eq:F=0}; we note that $u \mapsto f(u)=a(u;u,\cdot)-\F(u)$ is continuous by the continuity of $a$ and $\F$. It remains to show that~$u^\star$ is the only solution of~\eqref{eq:F=0}. In fact, if $u^\square\in X$ is any other solution, then~(F2) leads to 
\begin{align*}
 \nu \norm{u^\star-u^\square}_X^2 &\leq \dprod{\F(u^\star)-\F(u^\square),u^\star-u^\square}=0,
\end{align*}
i.e.~$u^\star=u^\square$. 
\end{proof}

\subsection{Applications} \label{sec:particulariterations}

In the ensuing section we will discuss the general Proposition~\ref{pro:kacanovgeneral} in the context of the Zarantonello, Ka\v{c}anov, and Newton iterations.

\subsubsection{Zarantonello iteration}
A most simple choice for the preconditioning operator from~\eqref{eq:prec} is~$\A(v) u:=(\delta^{-1}u,\cdot)_X$, where~$\delta>0$ is a fixed constant; in particular, here, $\A=\A(v)$ is independent of~$v$. In this case, the iterative linearization scheme~\eqref{eq:it} turns out to be
\begin{align}\label{eq:zarantonelloit}
 (u^{n+1},\cdot)_X=(u^{n},\cdot)_X-\dpa \dprod{\F(u^{n}),\cdot}.
\end{align}

\begin{theorem}[Convergence of the Zarantonello iteration]\label{thm:zarantonello}
Assuming~{\rm (F1)} and {\rm (F2)} (cf.~Section~\ref{sec:abstract}), the Zarantonello iteration \eqref{eq:zarantonelloit} converges to the unique solution $u^\star$ of~\eqref{eq:F=0} for any $\dpa \in \left]0,\nicefrac{2 \Fsm}{\Flc^2}\right[$.
\end{theorem}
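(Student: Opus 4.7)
The plan is to specialise Proposition~\ref{pro:kacanovgeneral} to the Zarantonello choice of preconditioner. Writing $\J_X\colon X\to X^\star$ for the Riesz isomorphism, the iteration~\eqref{eq:zarantonelloit} corresponds to $\A[v]=\dpa^{-1}\J_X$, independently of $v$, so that the associated bilinear form is $a(u;v,w)=\dpa^{-1}(v,w)_X$. This form is symmetric and satisfies \eqref{eq:coercive} and \eqref{eq:continuity} with $\aco=\abd=\dpa^{-1}$. Furthermore, the maps $u\mapsto a(u;u,\cdot)=\dpa^{-1}(u,\cdot)_X$ and $u\mapsto f(u)=\dpa^{-1}(u,\cdot)_X-\F(u)$ are strongly, hence weakly, continuous from $X$ into $X^\star$: the first by the linearity and isometry of $\J_X$, the second additionally by the Lipschitz assumption~(F1).

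The only non-routine hypothesis of Proposition~\ref{pro:kacanovgeneral} left to check is therefore the null-step property $\|u^{n+1}-u^n\|_X\to 0$. My strategy is to present the update as $u^{n+1}=T(u^n)$, where
\begin{equation*}
T(u):=u-\dpa\,\J_X^{-1}\F(u),
\end{equation*}
and to show that $T$ is a Banach contraction on $X$ for $\dpa\in\left]0,\nicefrac{2\Fsm}{\Flc^2}\right[$. Using the defining properties $\|\J_X^{-1}\varphi\|_X=\|\varphi\|_{X^\star}$ and $(\J_X^{-1}\varphi,v)_X=\dprod{\varphi,v}$ of the Riesz map, expansion of the square gives
\begin{equation*}
\|T(u)-T(v)\|_X^2=\|u-v\|_X^2-2\dpa\,\dprod{\F(u)-\F(v),u-v}+\dpa^2\,\|\F(u)-\F(v)\|_{X^\star}^2.
\end{equation*}
Invoking strong monotonicity~(F2) on the middle term and Lipschitz continuity~(F1) on the last yields
\begin{equation*}
\|T(u)-T(v)\|_X^2\le\bigl(1-2\dpa\,\Fsm+\dpa^2\,\Flc^2\bigr)\|u-v\|_X^2,
\end{equation*}
and the quadratic $q^2:=1-2\dpa\,\Fsm+\dpa^2\,\Flc^2$ is strictly less than $1$ precisely for $\dpa\in\left]0,\nicefrac{2\Fsm}{\Flc^2}\right[$. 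Consequently $\|u^{n+1}-u^n\|_X\le q^n\|u^1-u^0\|_X\to 0$ geometrically.

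All assumptions of Proposition~\ref{pro:kacanovgeneral} being in place, the proposition delivers convergence of $\{u^n\}_{n\ge 0}$ to the unique solution $u^\star$ of~\eqref{eq:F=0}. The genuine obstacle is the contraction estimate above; the remaining items are straightforward verifications in the abstract framework. One could of course finish the proof directly by applying the Banach fixed-point theorem to $T$, but routing through Proposition~\ref{pro:kacanovgeneral} has the advantage of exhibiting the Zarantonello iteration as a clean instance of the general ILG methodology.
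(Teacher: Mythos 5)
Your proof is correct and follows essentially the same route as the paper's: verify the hypotheses of Proposition~\ref{pro:kacanovgeneral} with $\alpha=\beta=\delta^{-1}$, exhibit the iteration as $u^{n+1}=\T(u^n)$ with $\T(u)=u-\delta\J^{-1}\F(u)$, and expand $\|\T(u)-\T(v)\|_X^2$ using the Riesz isometry, (F1), and (F2) to obtain the contraction factor $1-2\delta\Fsm+\delta^2\Flc^2$. The only cosmetic difference is that you phrase the estimate as a contraction bound for general $u,v$ before specialising to consecutive iterates, whereas the paper plugs in $u=u^n$, $v=u^{n-1}$ directly; the content is identical.
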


\begin{proof}
We verify the assumptions required for Proposition~\ref{pro:kacanovgeneral} to hold. For~$a(u,v)=(\delta^{-1}u,v)_X$, $u,v\in X$, we note that~\eqref{eq:coercive} and~\eqref{eq:continuity} are satisfied with
\begin{equation}\label{eq:Zab}
\alpha=\beta=\delta^{-1}>0. 
\end{equation}
Moreover, both $u \mapsto a(u,\cdot)=(\dpa^{-1}u,\cdot)_X$ and $u \mapsto \F(u)$ are continuous on~$X$. It remains to show that $\norm{u^{n+1}-u^n}_X$ vanishes. For that purpose, we denote by $\J:X \to X'$ the Riesz-Fr\'{e}chet isometry. The iteration \eqref{eq:zarantonelloit} can then be written, in strong form, as $u^{n+1}=\T(u^n)$, where $\T(u):=u-\dpa \J^{-1}\F(u)$. This leads to
\begin{align*}
 \norm{u^{n+1}-u^n}_X^2 &= \norm{\T(u^n)-\T(u^{n-1})}_X^2 \\
 &= \norm{u^n-u^{n-1}}_X^2 -2 \dpa \dprod{\F(u^{n})-\F(u^{n-1}),u^n-u^{n-1}}\\
 & \quad + \dpa^2 \norm{\J^{-1}(\F(u^n)-\F(u^{n-1}))}_X^2,
\end{align*}
where we have used the linearity of $\J^{-1}$. Invoking (F1) and (F2), together with the fact that $\J^{-1}$ is isometric, we further get
\begin{align*}
 \norm{u^{n+1}-u^n}_X^2 \leq \left(1-2 \dpa \Fsm+\dpa^2 \Flc^2\right)\norm{u^{n}-u^{n-1}}_X^2. 
\end{align*}
We note that 
\begin{equation}\label{eq:gamma1}
\gamma:=\left(1-2 \dpa \Fsm+\dpa^2 \Flc^2\right)<1
\end{equation} 
if and only if $\dpa \in \left]0,\nicefrac{2 \Fsm}{\Flc^2}\right[$. Hence, by induction,
\[
\norm{u^{n+1}-u^n}_X^2\le \gamma^n\norm{u^{1}-u^0}_X^2,
\]
which shows that~$\|u^{n+1}-u^n\|_X\to 0$ as~$n\to 0$.
\end{proof}

\begin{remark}
We notice that the contraction factor~$\gamma$ from~\eqref{eq:gamma1} is minimal for the choice~$\delta=\nicefrac{\nu}{\Flc^2}$.
\end{remark}

\subsubsection{Ka\v{c}anov iteration}

Here we assume that the nonlinear operator~$\F$ from~\eqref{eq:F=0} takes the form $\F(u)=\A(u)u-g$, where $\A(u):\,X\to X'$ is linear (for given $u\in X$), and $g=-\F(0) \in X'$ is fixed. Then, the Ka\v{c}anov iteration is defined by
\begin{align} \label{eq:kacanovstrong}
\A(u^n)u^{n+1}=g,\qquad n\ge 0.
\end{align}
Note that this iteration can be cast into the setting of~\eqref{eq:it}, where~$\A(u^n)$ takes the role of the preconditioning operator, and $f(u^n)=\A(u^n)u^n-\F(u^n)=g$ is constant.
We make the assumption that
there exists a Gateaux differentiable functional $\G:X \to \mathbb{R}$ which satisfies the following properties:
\begin{enumerate}[(K1)]
\item $\G'(u)=a(u;u,\cdot)$ on~$X$, and $\G'$ is continuous and strongly monotone, i.e. there exists a real number $c_0 >0$ such that, for any~$u,v\in X$, it holds
\begin{align}\label{eq:Gmono}
\dprod{\G'(u)-\G'(v),u-v} \geq c_0 \norm{u-v}_X^2;
\end{align}
\item  For each $u,v\in X$ we have the bound $\G(u)-\G(v) \geq \nicefrac12 \left(a(u;u,u)-a(u;v,v)\right)$.
\end{enumerate}

In order to be able to apply Proposition~\ref{pro:kacanovgeneral}, we need an auxiliary result, which will also be crucial in the analysis of the Newton method in Section~\ref{sc:Newton} below.

\begin{lemma} \label{lemma:Hdifference}
If $\H:X \to \mathbb{R}$ is Gateaux differentiable with $\H'$ continuous and strongly monotone, then $\H$ is bounded from below.
\end{lemma}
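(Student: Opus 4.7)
The plan is to show that $\H(u)$ grows at least quadratically in $\|u\|_X$, from which a global lower bound follows immediately. First I would fix $u \in X$ arbitrary and consider the real-valued function $\phi:[0,1]\to\mathbb{R}$, $\phi(t):=\H(tu)$. Since $\H$ is G\^ateaux differentiable and $\H'$ is continuous as a map $X\to X^\star$, the function $\phi$ is continuously differentiable with $\phi'(t)=\dprod{\H'(tu),u}$. The fundamental theorem of calculus then yields the representation
\begin{equation*}
\H(u)-\H(0)=\int_0^1\dprod{\H'(tu),u}\,\d t.
\end{equation*}

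Next, I would invoke the strong monotonicity of $\H'$ with arguments $tu$ and $0$: for some $c>0$,
\begin{equation*}
\dprod{\H'(tu)-\H'(0),tu}\geq c\,\|tu\|_X^2 = c\,t^2\|u\|_X^2.
\end{equation*}
Dividing by $t>0$ gives $\dprod{\H'(tu),u}\geq c\,t\,\|u\|_X^2+\dprod{\H'(0),u}$, and integrating over $[0,1]$ yields
\begin{equation*}
\H(u)-\H(0)\geq \frac{c}{2}\|u\|_X^2+\dprod{\H'(0),u}.
\end{equation*}

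Finally, I would estimate the linear term by the duality bound $|\dprod{\H'(0),u}|\leq \|\H'(0)\|_{X^\star}\|u\|_X$. The right-hand side is then a quadratic polynomial in $r:=\|u\|_X$ with positive leading coefficient, so it attains its global minimum on $[0,\infty)$, giving the explicit bound
\begin{equation*}
\H(u)\geq \H(0)-\frac{\|\H'(0)\|_{X^\star}^2}{2c},
\end{equation*}
which is independent of $u$ and finishes the proof.

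The only delicate point is the justification of the integral representation of $\H(u)-\H(0)$, since G\^ateaux differentiability alone does not guarantee that $\phi$ is $C^1$; here, however, the assumed continuity of $\H':X\to X^\star$ forces $t\mapsto\dprod{\H'(tu),u}$ to be continuous on $[0,1]$, so the classical fundamental theorem of calculus applies to $\phi$ without difficulty. Everything else is routine.
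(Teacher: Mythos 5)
Your proof is correct and follows essentially the same route as the paper: fundamental theorem of calculus along the ray through the origin, strong monotonicity to produce the quadratic lower bound $\frac{c}{2}\|u\|_X^2$, a duality estimate on the linear term, and minimization of the resulting quadratic. The closing remark justifying the integral representation via continuity of $\H'$ is a sensible clarification, but otherwise the argument matches the paper's proof.
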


\begin{proof}
For fixed $v \in X$, and~$t\in[0,1]$, we define the function $\varphi(t):=\H(tv)$. We note that $\varphi'(t)=\dprod{\H'(tv),v}$, and, invoking the fundamental theorem of calculus, we find that
\begin{equation}\label{eq:H1}
 \H(v)-\H(0)=\int_0^1 \dprod{\H'(tv),v} \dt = \int_0^1 \dprod{\H'(tv)-\H'(0),v} \dt + \dprod{\H'(0),v}.
\end{equation}
Since $\H'$ is strongly monotone, there exists a constant~$\gamma>0$ such that
\begin{align*}
 \dprod{\H'(tv)-\H'(0),v}&=\frac{1}{t} \dprod{\H'(tv)-\H'(0),tv} \geq \gamma t \norm{v}_X^2,
\end{align*}
for any~$t \in ]0,1]$. Inserting this bound into~\eqref{eq:H1}, integrating with respect to~$t$, and using the submultiplicativity of the operator norm, yields
\begin{align*}
 \H(v) \geq \frac{\gamma}{2} \norm{v}_X^2-\norm{\H'(0)}_{X'} \norm{v}_X + \H(0).
\end{align*}
It is elementary to verify that the right-hand side is minimal for $\norm{v}_X=\gamma^{-1}\norm{\H'(0)}_{X'}$. With this choice we arrive at $\H(v) \geq \H(0)-\nicefrac{1}{2\gamma}\norm{\H'(0)}^2_{X'}$ for all~$v \in X$,
i.e.~$\H$ is bounded from below.
\end{proof}

\begin{theorem}[Convergence of the Ka\v{c}anov iteration] \label{thm:kacanov}
Suppose that {\rm (K1)} and~{\rm (K2)} hold. Furthermore, assume that the bilinear form $a(u;\cdot,\cdot)$ induced by $\A$ satisfies \eqref{eq:coercive} and \eqref{eq:continuity}, and is symmetric for all $u \in X$. Then the sequence $\{u^{n}\}_{n \geq 0}$ defined by \eqref{eq:kacanovstrong} converges to the unique solution $u^\star$ of \eqref{eq:F=0}.
\end{theorem}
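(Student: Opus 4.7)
The plan is to verify the hypotheses of Proposition~\ref{pro:kacanovgeneral}. Under~(K1), the nonlinear operator $\F(u) = \A[u]u - g = \G'(u) - g$ is strongly monotone with constant $\Fsm := c_0$, so~(F2) holds. The uniform coercivity~\eqref{eq:coercive} and boundedness~\eqref{eq:continuity} of $a$ are part of the hypothesis. Moreover, the mapping $u \mapsto a(u;u,\cdot) = \G'(u)$ is continuous from $X$ into $X^\star$ (strong topology) by~(K1), hence a fortiori continuous into $X^\star$ endowed with the weak topology; and $f(u) \equiv g$ is trivially continuous. It therefore only remains to establish that $\|u^{n+1}-u^n\|_X \to 0$ as $n \to \infty$.

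For this null-sequence property I will exploit the variational characterization of $u^{n+1}$. Since $a(u^n;\cdot,\cdot)$ is symmetric, coercive and bounded, the linear problem~\eqref{eq:kacanovstrong} is equivalent to the strictly convex minimization
\[
\Phi_n(w) := \tfrac{1}{2} a(u^n;w,w) - \dprod{g,w} \longrightarrow \min, \qquad w \in X.
\]
Introducing the functional $\H(u) := \G(u) - \dprod{g,u}$, property~(K2) followed by the minimization characterization yields
\[
\H(u^n) - \H(u^{n+1}) \geq \tfrac{1}{2}\bigl(a(u^n;u^n,u^n) - a(u^n;u^{n+1},u^{n+1})\bigr) - \dprod{g, u^n - u^{n+1}} = \Phi_n(u^n) - \Phi_n(u^{n+1}).
\]
Combining the elementary identity $\Phi_n(w) - \Phi_n(u^{n+1}) = \tfrac{1}{2} a(u^n; w-u^{n+1}, w-u^{n+1})$, valid because $u^{n+1}$ is the minimizer of the quadratic $\Phi_n$, with the coercivity bound~\eqref{eq:coercive}, this gives the energy-decrease estimate
\[
\H(u^n) - \H(u^{n+1}) \geq \tfrac{\aco}{2}\,\|u^{n+1}-u^n\|_X^2.
\]

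Finally, $\H'(u) = \G'(u) - g$ is continuous and strongly monotone by~(K1), so Lemma~\ref{lemma:Hdifference} guarantees that $\H$ is bounded from below on $X$. Consequently, $\{\H(u^n)\}_{n\geq 0}$ is monotonically decreasing and bounded below, hence convergent, and the displayed inequality forces $\|u^{n+1}-u^n\|_X \to 0$. An appeal to Proposition~\ref{pro:kacanovgeneral} then yields convergence of $\{u^n\}_{n\geq 0}$ to the unique solution $u^\star$ of~\eqref{eq:F=0}. The crux of the argument is the energy-decrease step, which relies essentially on both~(K2) and the symmetry of $a(u^n;\cdot,\cdot)$, as the latter is what allows the linear Ka\v{c}anov step to be reinterpreted as a quadratic minimization.
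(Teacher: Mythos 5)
Your proof is correct and follows essentially the same route as the paper: you introduce the potential $\H(u)=\G(u)-\dprod{g,u}$, use (K2) together with the symmetry of $a(u^n;\cdot,\cdot)$ and the iteration equation to obtain the energy decrease $\H(u^n)-\H(u^{n+1})\ge\frac{\aco}{2}\norm{u^{n+1}-u^n}_X^2$ (your quadratic-minimization identity for $\Phi_n$ is just a repackaging of the paper's direct algebraic manipulation), invoke Lemma~\ref{lemma:Hdifference} for boundedness from below, and conclude via Proposition~\ref{pro:kacanovgeneral}. No gaps.
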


\begin{proof}
Because of~(K1) it follows that~$u\mapsto a(u;u,\cdot)=\operator{G}'(u)$ is continuous. Moreover, $u\mapsto f(u)$ is constant, and thus continuous. Consequently, $u \mapsto \F(u)=a(u;u,\cdot)-f(u)$ is continuous as well.
We show that $\norm{u^{n+1}-u^n}_X$, $n\ge 0$, is a vanishing sequence. To this end, we follow closely along the lines of the proof of~\cite[Theorem~25.L]{Zeidler:90}. Let us introduce the functional $\H(u):=\G(u)-\dprod{g,u}$. We note that 
$\H'(u)=\G'(u)-g=\A(u)u-g=\F(u)$, i.e.~$\H$ is the potential of $\F$. Moreover, by virtue of~(K1), the derivative $\H'=\G'-g$ is continuous and strongly monotone, and thus $\F$ satisfies~(F2). In particular, with the aid of Lemma~\ref{lemma:Hdifference}, we deduce that $\H$ is bounded from below. Next, we will verify that $\left\{\H(u^n)\right\}_{n \geq 0}$ is a monotone decreasing sequence. Indeed, noticing that $a(u^n;u^{n+1},u^{n+1}-u^n)=\dprod{g,u^{n+1}-u^n}$, and employing~(K2), yields
\begin{align*}
 \H(u^n)-\H(u^{n+1})
 &= \dprod{g,u^{n+1}-u^n}+\G(u^{n})-\G(u^{n+1})\\
 &\geq a(u^n;u^{n+1},u^{n+1}-u^n)+\frac12a(u^n;u^n,u^n)-\frac12a(u^n;u^{n+1},u^{n+1})\\
 &\geq \frac12a(u^n;u^n,u^n)-a(u^n;u^{n+1},u^n)+\frac12a(u^n;u^{n+1},u^{n+1}),
 \end{align*}
for any~$n\ge 0$. Then, employing the symmetry of~$a(u^n;\cdot,\cdot)$, and involving~\eqref{eq:coercive}, we obtain
\begin{equation}\label{eq:H2}
 \H(u^n)-\H(u^{n+1})
 \ge \frac12a(u^n;u^{n+1}-u^n,u^{n+1}-u^n)
 \ge\frac{\alpha}{2}\norm{u^{n+1}-u^n}_X^2\ge 0,
\end{equation}
which shows that~$\{\H(u^n)\}_{n\ge 0}$ is monotone decreasing. Then, recalling the  boundedness from below, we conclude that $\H(u^n)-\H(u^{n+1})\to0$ as~$n\to\infty$. Hence, exploiting~\eqref{eq:H2}, it follows that~$\norm{u^{n+1}-u^n}_X$ vanishes, and the proof is complete.
\end{proof}

\subsubsection{Newton iteration}\label{sc:Newton}

For the Newton iteration the preconditioning operator in~\eqref{eq:fp0} is selected to be~$\A(v)=\delta(v)^{-1}\F'(v)$, $v\in X$, where~$\delta(v)>0$ is a (damping) parameter, and $\F'$ signifies the Gateaux derivative of~$\F$. Then, the (damped) Newton iteration is given by
\begin{align} \label{eq:newtonstrong}
 \F'(u^{n})u^{n+1}=\F'(u^{n})u^{n}-\dpa(u^n) \F(u^{n}),\qquad n\ge 0.
\end{align}
For the purpose of applying Proposition~\ref{pro:kacanovgeneral}, we make the following assumptions:

\begin{enumerate}[(N1)]
\item The operator $\F$ is Gateaux differentiable.
Moreover, $\F'$ is coercive and bounded in the sense that, for any given $u \in X$, it holds
\begin{equation}\label{eq:N11}
 \dprod{\F'(u)v,v} \geq \fpco \norm{v}_X^2 \qquad \forall v \in X,
\end{equation}
and
\begin{equation}\label{eq:N12}
 \dprod{\F'(u)v,w} \leq \fpbd \norm{v}_X \norm{w}_X \qquad \forall v,w \in X, 
\end{equation}
where $\fpco,\fpbd >0$ are independent of $u$.
\item It exists a Gateaux differentiable functional $\operator{G}:\,X \to \mathbb{R}$ such that $\G'(u)=\F'(u)u$ in~$X'$ for any~$u\in X$, and $\G'$ is continuous when $X'$ is endowed with the weak topology.
\item It exists a Gateaux differentiable functional $\operator{H}:\,X \to \mathbb{R}$ such that $\operator{H}'=\F$. 
\item There are some constants~$0<\delta_{\min}\le\delta_{\max}<\infty$ such that $\delta:\,X\to[\delta_{\min},\delta_{\max}]$ is a continuous functional.
\end{enumerate} 

\begin{theorem}[Convergence of the damped Newton iteration] \label{thm:newton}
Assume {\rm (F1)} and~{\rm (F2)} (cf.~Section~\ref{sec:abstract}), as well as {\rm (N1)}--{\rm (N4)}. Then, for $\delta_{\max} <\nicefrac{2\fpco}{\Flc}$ in~{\rm (N4)} the damped Newton iteration \eqref{eq:newtonstrong} converges to the unique solution $u^\star\in X$ of~\eqref{eq:F=0} .
\end{theorem}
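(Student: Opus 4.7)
The plan is to verify the hypotheses of Proposition~\ref{pro:kacanovgeneral} for the bilinear form induced by the Newton preconditioner, namely $a(u;v,w):=\delta(u)^{-1}\dprod{\F'(u)v,w}$, so that by \eqref{eq:f(u)} and (N2) we have $f(u)=\delta(u)^{-1}\G'(u)-\F(u)$. Symmetry of $a(u;\cdot,\cdot)$ is immediate from \eqref{eq:symF}, while the uniform coercivity \eqref{eq:coercive} and continuity \eqref{eq:continuity} hold with $\aco=\nicefrac{\fpco}{\delta_{\max}}$ and $\abd=\nicefrac{\fpbd}{\delta_{\min}}$, thanks to \eqref{eq:N11}, \eqref{eq:N12} and (N4). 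For the weak continuity hypothesis, suppose $u_n\to u$ strongly in $X$: then $\delta(u_n)^{-1}\to\delta(u)^{-1}$ in $\mathbb{R}$ by (N4), and $\G'(u_n)\rightharpoonup\G'(u)$ in $X^\star$ by (N2), so that $a(u_n;u_n,\cdot)=\delta(u_n)^{-1}\G'(u_n)\rightharpoonup a(u;u,\cdot)$; since (F1) renders $\F$ strongly continuous, $f$ inherits weak continuity.

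The heart of the argument is to show $\norm{u^{n+1}-u^n}_X\to 0$, for which I would use the potential $\H$ from (N3) as a Lyapunov function. Because $\H'=\F$ is continuous and strongly monotone by (F1)--(F2), Lemma~\ref{lemma:Hdifference} guarantees that $\H$ is bounded from below. To derive a descent estimate, I would apply the fundamental theorem of calculus to $t\mapsto\H(u^n+t(u^{n+1}-u^n))$, in the spirit of the proof of Lemma~\ref{lemma:Hdifference}, to obtain
\begin{equation*}
\H(u^{n+1})-\H(u^n)=\int_0^1\dprod{\F(u^n+t(u^{n+1}-u^n))-\F(u^n),u^{n+1}-u^n}\dt+\dprod{\F(u^n),u^{n+1}-u^n}.
\end{equation*}
The integral is bounded above by $\nicefrac{\Flc}{2}\,\norm{u^{n+1}-u^n}_X^2$ via (F1). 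For the remaining term, rearranging the iteration \eqref{eq:newtonstrong} yields $\F(u^n)=-\delta(u^n)^{-1}\F'(u^n)(u^{n+1}-u^n)$; then \eqref{eq:N11}, together with $\delta(u^n)\le\delta_{\max}$, gives
\begin{equation*}
\dprod{\F(u^n),u^{n+1}-u^n}\le -\frac{\fpco}{\delta_{\max}}\norm{u^{n+1}-u^n}_X^2.
\end{equation*}

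Adding the two contributions, I arrive at the one-step descent
\begin{equation*}
\H(u^{n+1})-\H(u^n)\le\left(\frac{\Flc}{2}-\frac{\fpco}{\delta_{\max}}\right)\norm{u^{n+1}-u^n}_X^2,
\end{equation*}
in which the standing hypothesis $\delta_{\max}<\nicefrac{2\fpco}{\Flc}$ renders the prefactor strictly negative. Consequently $\{\H(u^n)\}_{n\ge 0}$ is monotone decreasing and bounded from below, hence convergent, and the descent bound therefore forces $\norm{u^{n+1}-u^n}_X\to 0$. Proposition~\ref{pro:kacanovgeneral} then delivers convergence of $\{u^n\}_{n\ge 0}$ to the unique solution $u^\star\in X$ of \eqref{eq:F=0}.

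I expect the main technical subtlety to be the descent inequality itself, which requires simultaneously handling the Lipschitz perturbation of $\F$ along the segment from $u^n$ to $u^{n+1}$ via (F1), inserting the Newton correction into $\dprod{\F(u^n),u^{n+1}-u^n}$ through the coercivity \eqref{eq:N11}, and extracting a uniform rate from the damping range in (N4). The remaining ingredients -- symmetry, coercivity, boundedness, and weak continuity -- are essentially bookkeeping that matches the various assumptions to the hypotheses of Proposition~\ref{pro:kacanovgeneral}.
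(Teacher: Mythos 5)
Your proposal is correct and follows essentially the same route as the paper's own proof: verify the hypotheses of Proposition~\ref{pro:kacanovgeneral} with $\aco=\nicefrac{\fpco}{\delta_{\max}}$, $\abd=\nicefrac{\fpbd}{\delta_{\min}}$, use Lemma~\ref{lemma:Hdifference} for boundedness of $\H$ from below, and establish the one-step descent estimate with constant $\nicefrac{\fpco}{\delta_{\max}}-\nicefrac{\Flc}{2}>0$ via the fundamental theorem of calculus, (F1), \eqref{eq:N11}, and the identity $\F(u^n)=-\delta(u^n)^{-1}\F'(u^n)(u^{n+1}-u^n)$. The only differences are cosmetic (sign convention in the descent inequality, and spelling out the weak-continuity argument that the paper merely asserts).
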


\begin{proof}
We aim at employing Proposition~\ref{pro:kacanovgeneral} as before. By virtue of~\eqref{eq:N11}, \eqref{eq:N12}, and~(N4), we obtain
\[
a(u;v,v)\ge\fpco\delta_{\max}^{-1}\norm{v}_X^2,\qquad u,v\in X,
\]
and
\[
a(u;v,w)\le\fpbd\delta_{\min}^{-1}\norm{v}_X\norm{w}_X,\qquad u,v,w\in X,
\]
which are the coercivity and boundedness conditions~\eqref{eq:coercive} and~\eqref{eq:continuity}, with
\begin{equation}\label{eq:Nab}
\alpha=\nicefrac{\fpco}{\delta_{\max}},\qquad
\beta=\nicefrac{\fpbd}{\delta_{\min}},
\end{equation}
respectively. Next, we remark that the maps~$u\mapsto a(u;u,\cdot)=\delta(u)^{-1}\F'(u)u$ and~$u\mapsto \F(u)$ are both continuous, when $X'$ is endowed with the weak topology, by (N2) and (N4), and by (F1), respectively. Therefore, by the same arguments as in the proof of Theorem~\ref{thm:kacanov}, it suffices to show that there exists a constant $C>0$ such that 
 \begin{align} \label{eq:Hdifference}
  \H(u^n)-\H(u^{n+1}) \geq C \norm{u^{n+1}-u^n}_X^2, \qquad n \geq 0.
 \end{align}
To this end, we define the function $\varphi(t):=\H(u^n+t(u^{n+1}-u^n))$, $t\in[0,1]$, and observe that 
\begin{align*}                                                                                                                                                                                        
\varphi'(t)
=\dprod{\H'(u^n+t(u^{n+1}-u^n)),u^{n+1}-u^n}
=\dprod{\F(u^n+t(u^{n+1}-u^n)),u^{n+1}-u^n}.
\end{align*}
Then, the fundamental theorem of calculus implies that
\begin{align*}
 \H(u^{n})-\H(u^{n+1})
&= -\int_0^1 \dprod{\F(u^n+t(u^{n+1}-u^n)),u^{n+1}-u^n} \dt\\
&= -\int_0^1 \dprod{\F(u^{n}+t(u^{n+1}-u^{n}))-\F(u^{n}),u^{n+1}-u^{n}} \dt \\
& \quad - \dprod{\F(u^{n}),u^{n+1}-u^{n}}.
\end{align*}
By the definition of the Newton iteration~\eqref{eq:newtonstrong}, it holds that $\F(u^{n})=\dpa(u^n)^{-1}\F'(u^{n})(u^{n}-u^{n+1})$, $n\ge 0$. Thus, with the aid of~(F1) and~\eqref{eq:N11}, it follows that 
\begin{align*}
 \H(u^{n})-\H(u^{n+1})&\geq-\Flc \int_0^1  t \norm{u^{n+1}-u^{n}}_X^2 \dt + \dpa(u^n)^{-1}\dprod{\F'(u^{n})(u^{n+1}-u^{n}),u^{n+1}-u^{n}} \\
 & \geq -\frac{\Flc}{2}\norm{u^{n+1}-u^{n}}_X^2 + \fpco\dpa(u^n)^{-1} \norm{u^{n+1}-u^{n}}_X^2.
\end{align*}
If $\delta(u^n) \leq \delta_{\max}<\nicefrac{2\fpco}{\Flc}$, then 
\begin{equation}\label{eq:C}
\frac{\fpco}{\dpa(u^n)} - \frac{\Flc}{2} \geq \frac{\fpco}{\dpa_{\max}} - \frac{\Flc}{2}=:C>0, \qquad n \geq 0.
\end{equation} 
We conclude that~\eqref{eq:Hdifference} is satisfied. 
\end{proof}

\begin{remark}[Classical Newton scheme]\label{rem:newton1}
Recalling~\eqref{eq:N11} with $u=u^0$, and applying~\cite[Theorem~3.3.23]{Necas:86}, we deduce the bound $\norm{\F'(u^0)^{-1}v}_{X}\le\fpco^{-1}\norm{v}_{X'}$ for all~$v\in X'$. Furthermore, assume that $\F$ is Fr\'{e}chet differentiable and~$\F'$ is Lipschitz continuous, i.e. there exists a constant~$\LFp>0$ such that
\[
\norm{\F'(v)-\F'(w)}_{X'}\le\LFp\norm{v-w}_X\qquad\forall v,w\in X.
\]
This leads to
\begin{align*}
 \norm{\F'(u^0)^{-1}(\F'(u)-\F'(v))}_X \leq \frac{\LFp}{\fpco}  \norm{u-v}_X\qquad\forall u,v\in X.
\end{align*}
Moreover, if the initial guess~$u^0\in X$ in~\eqref{eq:newtonstrong} is sufficiently close to the solution~$u^\star\in X$ of~\eqref{eq:F=0} in the sense that  
$\norm{\F(u^0)}_{X'} < \nicefrac{\fpco^2}{2 \LFp}$, then
we infer that
\begin{align*}
 \norm{\F'(u^0)^{-1}\F(u^0)}_X \leq \frac{1}{\fpco} \norm{\F(u^0)}_{X'} <\frac{\fpco}{2 \LFp}.
\end{align*}
Referring to \cite[Theorem 2.1]{Deuflhard:04}, it follows that the classical Newton iteration with~$\delta(u^n)=1$ in~\eqref{eq:newtonstrong} is well-defined, converges to a solution of \eqref{eq:F=0}, and converges \emph{quadratically}.
\end{remark}

\begin{remark}\label{rem:adnewton}
The proof of Theorem~\ref{thm:newton} is crucially based on~\eqref{eq:Hdifference}. We emphasize that this bound may be satisfied even if the damping parameter~$\delta(u^n)$ in~\eqref{eq:newtonstrong} is larger than $\nicefrac{2\fpco}{\Flc}$. This is particularly important when~$\nicefrac{2\fpco}{\Flc}\le 1$, and the choice~$\delta(u^n)=1$ (leading to local quadratic convergence, cf.~Remark~\ref{rem:newton1}) is not admissible \emph{a priori}. In this case, we may fix~$\epsilon>0$ small, and aim to \emph{a posteriori} attain the bound, for~$n\ge 0$,
\begin{align} \label{eq:newtoncondition}
\H(u^n)-\H(u^{n+1}) \geq \epsilon \norm{u^{n+1}-u^n}_X^2.
\end{align}  
To this end, we may pursue, for instance, the adaptive damping parameter selection approach proposed in~\cite[\S 3.1]{Deuflhard:04}. More precisely, in each iterative step, we define an initial value for $\delta(u^{n})$ by the following \emph{prediction} strategy:
\begin{align*}
\dpa^{n,0}=\begin{cases} \min\left(\nicefrac{\dpa(u^{n-1})}{\kappa},1\right) & \text{if } \dpa(u^{n-2}) \leq \dpa(u^{n-1}), \\
\dpa(u^{n-1}) & \text{else}. \end{cases}
\end{align*}
where $0<\kappa<1$ is a fixed (correction) factor. Here, we set $\dpa(u^{-2})=\dpa(u^{-1})=\delta^0$, with~$\delta^0$ an initial choice. 
If $u^{n+1}$ is obtained by the damped Newton method with damping parameter $\dpa^{n,i}$, for some~$i\ge 0$, then we need to verify wether or not \eqref{eq:newtoncondition} is satisfied. If not, then we adjust the damping parameter according to the \emph{correction} strategy
\begin{align}\label{eq:i}
\dpa^{n,i+1}=\max\left(\fpco(\epsilon+\nicefrac{\Flc}{2})^{-1}, \kappa \dpa^{n,i}\right),\qquad i\ge 0.
\end{align}
Subsequently, we will compute $u^{n+1}$ for the new choice~$\delta^{n,i+1}$. This process is repeated until~\eqref{eq:newtoncondition} is true, say after~$i^n$ iterations of~\eqref{eq:i}. At this point, we let~$\delta(u^n):=\delta^{n,i^n}$. 
Evidently, in view of~\eqref{eq:C}, we remark that~\eqref{eq:newtoncondition} will certainly hold once~$\delta^{n,i}\le\fpco(\epsilon+\nicefrac{\Flc}{2})^{-1}$. Moreover, in the Galerkin setting, we note that~\eqref{eq:N11} can be verified numerically at the cost of an eigenvalue problem. Finally, if the values of the constants $\fpco$ and $\Flc$ are not easily accessible, we can simply use the (possibly pessimistic) damping parameter $\dpa^{n,i+1}:=\kappa\dpa^{n,i}$.
\end{remark}

\begin{remark}
We emphasize that the proof of Theorem~\ref{thm:newton} works for much more general preconditioning operators~$\A$. Assume that $\F$ satisfies (F1), (F2), and (N3), the mapping $u \mapsto \A(u)u=a(u;u,\cdot)$ is continuous w.r.t.~the weak topology on $X'$, and the bilinear form induced by $\A$ fulfills~\eqref{eq:coercive} and \eqref{eq:continuity}. If $\alpha>\nicefrac{\Flc}{2}$, then the crucial property~\eqref{eq:Hdifference} holds with $C:=\alpha-\nicefrac{\Flc}{2}>0$; indeed, this can be shown as in the proof of Theorem~\ref{thm:newton}, and the convergence of the method can be proved similarly as before. In particular, our unified iteration scheme does also recover Newton-like methods, e.g., the case $\A(u):=\delta \F'(u_0)$ for some initial guess $u_0 \in X$, with a small enough damping parameter $\delta>0$.        
\end{remark}


\section{Galerkin approach and a posteriori error analysis} \label{sec:aposteriorianal}

The numerical solution of~\eqref{eq:F=0} is based on a finite-dimensional subspace~$X_N\subset X$, and on the iterative linearization Galerkin (ILG) formulation~\eqref{eq:itN1}, with a given initial guess~$u_N^0\in X_N$. Since~$X_N\subset X$, the assumptions in Section~\ref{sec:abstract} guarantee the existence of~$\un{n+1}\in X_N$ in each iteration step.

In this section, we will pursue two different strategies for the derivation of a posteriori error estimates for $\norm{u^\star-\un{n+1}}_X$, where $u^\star \in X$ is the unique solution of \eqref{eq:F=0}. In both approaches an \textit{elliptic reconstruction} technique, cf.~\cite{LakkisMakridakis:06,MakridakisNochetto:03}, will be employed. In the first method we use an elliptic reconstruction for the solution of the \emph{linear problem~\eqref{eq:itN1}}, and the second strategy is based on applying a similar idea for a nonlinear discrete problem equivalent to~\eqref{eq:itN1}. We will refer to this methods as the \emph{linear} and \emph{nonlinear elliptic reconstruction}, respectively.


\subsection{A posteriori error analysis based on a \emph{linear} elliptic reconstruction} 

For the sake of a general a posteriori error analysis, using a linear elliptic reconstruction, we suppose that there exists a computable bound $\eta(\un{n+1},\un{n})$ for the residual
\begin{align} \label{eq:linresbound}
 \sup_{\substack{v \in X \\ \norm{v}_X=1}} \left\{a(\un{n};\un{n+1},v)-\dprod{f(\un{n}),v}\right\} \leq \eta(\un{n+1},\un{n}).
\end{align}
We remark that, in the context of finite element methods for linear elliptic problems, there is a large body of literature focusing on the development of such estimates; see, e.g., \cite{AinsworthOden:96,Verfurth:13}.

\begin{theorem}\label{thm:aposteriorierror}
Suppose that {\rm  (F1)} and {\rm (F2)}, cf.~Section~\ref{sec:abstract}, as well as~\eqref{eq:coercive} and~\eqref{eq:continuity} hold true. Then, we have the a posteriori error bound
\begin{align}\label{eq:linestimator}
\norm{u^\star-\un{n+1}}_X \leq \frac{\beta}{\alpha \nu} \eta(\un{n+1},\un{n}) + \frac{\beta+\Flc}{\nu} \norm{\un{n+1}-\un{n}}_X,
\end{align}
where $u^\star$ is the unique solution of \eqref{eq:F=0}.
\end{theorem}

\begin{proof}
Due to \eqref{eq:coercive} and~\eqref{eq:continuity} there exists a unique $\widetilde{u}^{n+1} \in X$ such that
\begin{align}\label{eq:reconit}
a(u_N^n;\rec{n+1},v)=\dprod{f(\un{n}),v} \qquad \forall v \in X.
\end{align} 
We note that $\rec{n+1}$ is a reconstruction in the sense that $u_N^{n+1}\in X_N$ is the Galerkin projection of $\rec{n+1}$. By using the assumption (F2), we find that 
\begin{align*}
\nu \norm{u^\star-\un{n+1}}_X^2 &\leq \dprod{\F(u^\star)-\F(\un{n+1}),u^\star-\un{n+1}}
=- \dprod{\F(\un{n+1}),u^\star-\un{n+1}},
\end{align*}
since~$u^\star\in X$ is the solution of~\eqref{eq:F=0}. Hence, 
\begin{align*}
\nu \norm{u^\star-\un{n+1}}_X^2 &\leq -a(\un{n};\un{n+1},u^\star-\un{n+1})+\dprod{f(\un{n}),u^\star-\un{n+1}} \\
& \quad + a(\un{n};\un{n+1}-\un{n},u^\star-\un{n+1})\\
& \quad + a(\un{n};\un{n},u^\star-\un{n+1})- \dprod{f(\un{n}),u^\star-\un{n+1}} \\
& \quad - \dprod{\F(\un{n+1}),u^\star-\un{n+1}}.
\end{align*}
Using~\eqref{eq:reconit} and~\eqref{eq:f(u)}, this estimate transforms into
\begin{align*}
\nu \norm{u^\star-\un{n+1}}_X^2 &\leq a(\un{n};\rec{n+1}-\un{n+1},u^\star-\un{n+1})
 + a(\un{n};\un{n+1}-\un{n},u^\star-\un{n+1}) \\
& \quad - \dprod{\F(\un{n+1})-\F(\un{n}),u^\star-\un{n+1}}.
\end{align*}
Applying~\eqref{eq:continuity} and~(F1), we find that 
\begin{align*}
\nu \norm{u^\star-\un{n+1}}_X^2 & \leq \beta \norm{\rec{n+1}-\un{n+1}}_X \norm{u^\star-\un{n+1}}_X + \beta \norm{\un{n+1}-\un{n}}_X \norm{u^\star-\un{n+1}}_X\\
&  \quad + \Flc \norm{\un{n+1}-\un{n}}_X \norm{u^\star-\un{n+1}}_X.
\end{align*}
Dividing by~$\norm{u^\star-\un{n+1}}_X$ yields
\begin{align}\label{eq:boundthmlinear}
\nu\norm{u^\star-\un{n+1}}_X \leq \beta\norm{\rec{n+1}-\un{n+1}}_X + (\beta+\Flc)\norm{\un{n+1}-\un{n}}_X.
\end{align}
Moreover, by the coercivity property~\eqref{eq:coercive}, for~$\un{n+1}\neq\rec{n+1}$, we note that
\begin{align*}
\alpha \norm{\un{n+1}-\rec{n+1}}_X
&\leq \frac{a(\un{n};\un{n+1}-\rec{n+1},\un{n+1}-\rec{n+1})}{\norm{\un{n+1}-\rec{n+1}}_X} \le\sup_{\substack{v \in X \\ \norm{v}_X=1}} a(\un{n};\un{n+1}-\rec{n+1},v).
\end{align*}
Involving \eqref{eq:reconit} and~\eqref{eq:linresbound}, we arrive at
\begin{align} \label{eq:recdiffbound}
\alpha \norm{\un{n+1}-\rec{n+1}}_X
&\le \sup_{\substack{v \in X \\ \norm{v}_X=1}}  \left(a(\un{n};\un{n+1},v)-\dprod{f(\un{n}),v}\right)
\le\eta(\un{n+1},\un{n}).
\end{align}
Inserting this estimate into \eqref{eq:boundthmlinear}, finishes the proof.
\end{proof}

\begin{remark} \label{rem:linestimator}
We emphasize that the estimator~\eqref{eq:linestimator} permits to bound the error $\norm{u^\star-\un{n+1}}_X$ separately in terms of the \emph{discretization} error indicator, $\nicefrac{\beta}{\alpha \nu} \eta(\un{n+1},\un{n})$, and of the \emph{linearization} error indicator, $\nicefrac{(\beta+\Flc)}{\nu} \norm{u_N^{n+1}-u_N^n}_X$. Let us discuss these error contributions in more detail: First, recall that $\un{n+1}$ is the Galerkin projection of $\rec{n+1}$ in the sense of the Galerkin orthogonality property
 \begin{align*}
  a(\un{n};\rec{n+1}-\un{n+1},v)=0 \qquad \forall v \in X_N.
 \end{align*}
Thus, the quantity $\norm{\rec{n+1} -\un{n+1}}_X$ is an indicator for the quality of approximation of the Galerkin discretization. Here, invoking~\eqref{eq:recdiffbound}, we see that
\[
\norm{\rec{n+1} -\un{n+1}}_X \leq \alpha^{-1} \eta(\un{n+1},\un{n}),
\]
wherefore it is reasonable to interpret the $\eta$-term as the discretization error contribution in the total estimator. Secondly, it holds that $\norm{\un{n+1}-\un{n}}_X \to 0$ for $n \to 0$, which underlines the convergence of the iterative linearization; thereby, this term can be seen to quantify the linearization effect in the ILG approximation. 
\end{remark}

\begin{remark}
The constants for the estimator in Theorem~\ref{thm:aposteriorierror} for the Zarantonello iteration can be slightly improved. This is due to the fact that the preconditioning operator~$\A$ is constant in this case; cf.~\cite[Proposition~2.2]{CongreveWihler:17}.
\end{remark}


\subsection{A posteriori error analysis based on a \emph{nonlinear} elliptic reconstruction} \label{sec:aposterioriNGM}

In this section we devise an a posteriori error estimate for the \emph{linear} Galerkin iteration~\eqref{eq:itN1} based on applying the reconstruction technique to a \emph{nonlinear} discrete problem equivalent to \eqref{eq:itN1}. We underline that the nonlinear problem~\eqref{eq:newnonlinear}, exactly as in the case of the linear elliptic reconstruction from~\eqref{eq:reconit}, is of purely theoretical relevance in the derivation of the estimator, and does not need to be solved in the actual computations. 

We define an operator $\psi_N: X \to X_N$, where, for fixed $w \in X$, we let $\psi_N(w)$ to be the Riesz representative of~$\F(w) \in X_N'$ with respect to the inner product in~$X$, i.e. 
\begin{equation}\label{eq:R1}
(\psi_N(w),v)_X=\dprod{\F(w),v} \qquad \forall v \in X_N.
\end{equation}
Note that, if $\u_N$ is the solution of the \emph{nonlinear} Galerkin approximation of~\eqref{eq:F=0weak} with respect to the discrete space~$X_N$, i.e.
\begin{equation}\label{eq:itNnonl}
\u_N\in X_N:\qquad\dprod{\F(\u_N),v}=0\qquad\forall v\in X_N,
\end{equation}
then it holds that $\psi_N(\u_N)=0$. 

For each $n \geq 0$, we define the \emph{nonlinear} elliptic reconstruction $\rec{n+1} \in X$ of the solution~$u_N^{n+1}\in X_N$ of~\eqref{eq:itN1} by
\begin{align}\label{eq:newnonlinear}
\dprod{\F(\rec{n+1}),v}=(\psi_N(\un{n+1}),v)_X \qquad \forall v \in X.
\end{align}
By construction of the operator $\psi_N$, it holds that $\un{n+1}$ is the Galerkin approximation of~\eqref{eq:newnonlinear}, i.e.
\begin{equation*}
\dprod{\F(\rec{n+1})-\F(\un{n+1}),v}=0 \qquad \forall v \in X_N.
\end{equation*}
Then, with the aid of~(F2), we infer that
\begin{align*}
\nu\norm{\rec{n+1}-\un{n+1}}_X^2
&\le\dprod{\F(\rec{n+1})-\F(\un{n+1}),\rec{n+1}-\un{n+1}}.
\end{align*}
Hence,
\begin{align*}
\nu\norm{\rec{n+1}-\un{n+1}}_X
&\le\sup_{\stackrel{w\in X}{\|w\|_X=1}}\dprod{\F(\rec{n+1})-\F(\un{n+1}),w}\\
&\le\sup_{\stackrel{w\in X}{\|w\|_X=1}}\left\{(\psi_N(\un{n+1}),w)_X-\dprod{\F(\un{n+1}),w}\right\}.
\end{align*}
Now, suppose that there exists a computable bound $\eta(\un{n+1})$ such that
\[
\sup_{\stackrel{w\in X}{\|w\|_X=1}}\left\{(\psi_N(\un{n+1}),w)_X-\dprod{\F(\un{n+1}),w}\right\}
\le\eta(\un{n+1}).
\]
Then,
\begin{align} \label{eq:errestnonlin}
\norm{\rec{n+1}-\un{n+1}}_X \leq \nu^{-1}\eta(\un{n+1}).
\end{align}
Similarly as in the linear case, in the specific context of the finite element method for elliptic PDE, such residual bounds can be obtained by standard techniques, see, e.g.,~\cite{AinsworthOden:96,Verfurth:13}. This will be carried out in Section~\ref{sec:ILFEM} for quasilinear elliptic PDE. 

\begin{theorem} \label{thm:aposterioriNG}
Given~{\rm (F1)} and~{\rm (F2)} (cf.~Section~\ref{sec:abstract}), there holds the a posteriori error bound
\begin{align*}
 \norm{u^\star-\un{n+1}}_X \leq \frac{1}{\Fsm}\left(\eta(\un{n+1})+ \norm{\psi_N(\un{n+1})}_{X}\right),
\end{align*}
where $u^\star$ is the exact solution of \eqref{eq:F=0}.
\end{theorem}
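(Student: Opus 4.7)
The plan is to decompose the error via triangle inequality around the nonlinear elliptic reconstruction:
\[
\norm{u^\star-\un{n+1}}_X \le \norm{u^\star-\rec{n+1}}_X + \norm{\rec{n+1}-\un{n+1}}_X.
\]
The second term is already controlled by the estimate~\eqref{eq:errestnonlin} derived just above the statement of the theorem, which furnishes the contribution $\nu^{-1}\eta(\un{n+1})$.

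For the first term, I would exploit the fact that $\rec{n+1}$ solves the continuous nonlinear problem~\eqref{eq:newnonlinear} on all of $X$, combined with strong monotonicity~(F2). Specifically, since $u^\star$ satisfies $\F(u^\star)=0$, I would write
\[
\nu\norm{u^\star-\rec{n+1}}_X^2 \le \dprod{\F(u^\star)-\F(\rec{n+1}),u^\star-\rec{n+1}} = -\dprod{\F(\rec{n+1}),u^\star-\rec{n+1}},
\]
and then use~\eqref{eq:newnonlinear} with the admissible test function $v = u^\star-\rec{n+1}\in X$ to replace the duality pairing by $-(\psi_N(\un{n+1}),u^\star-\rec{n+1})_H$.

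From there, Cauchy--Schwarz in $H$ combined with the continuous embedding $X\hookrightarrow H$ (with constant $C_E$) yields
\[
\nu\norm{u^\star-\rec{n+1}}_X^2 \le \norm{\psi_N(\un{n+1})}_H\,\norm{u^\star-\rec{n+1}}_H \le C_E\norm{\psi_N(\un{n+1})}_H\,\norm{u^\star-\rec{n+1}}_X.
\]
Dividing by $\norm{u^\star-\rec{n+1}}_X$ (handling the trivial case separately) gives $\norm{u^\star-\rec{n+1}}_X \le \nu^{-1}C_E\norm{\psi_N(\un{n+1})}_H$, and adding the two contributions completes the argument.

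The step requiring the most care is justifying the test-function choice in~\eqref{eq:newnonlinear}: one needs $\rec{n+1}\in X$ to actually exist as a solution of the nonlinear problem, which is guaranteed by~(F1) and~(F2) since $\F$ is Lipschitz and strongly monotone, and the data $\psi_N(\un{n+1})\in X_N\subset X\hookrightarrow H$ defines an admissible right-hand side in $X^\star$. Once this is in place, the remaining manipulations are mechanical applications of monotonicity and the embedding inequality.
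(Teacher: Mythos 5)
Your argument is correct and follows essentially the same route as the paper's proof: triangle inequality around the nonlinear reconstruction $\rec{n+1}$, the bound~\eqref{eq:errestnonlin} for the first contribution, and strong monotonicity~(F2) combined with~\eqref{eq:newnonlinear}, Cauchy--Schwarz in $H$, and the embedding~$X\hookrightarrow H$ for the second. Your additional remark on the well-posedness of the reconstruction~\eqref{eq:newnonlinear} under~(F1)--(F2) is a sensible point of care that the paper leaves implicit.
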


We note that, in the bound above, $\Fsm^{-1}\eta(\un{n+1})$ is an indicator for the discretization error by a similar argument as in Remark~\ref{rem:linestimator}, and $\Fsm^{-1} \norm{\psi_N(\un{n+1})}_{H}$ controls the linearization error. Indeed, since $\psi_N(\u_N)=0$ and $\{\un{n}\}_{n \geq 0}$ converges to the solution $\u_N$ of~\eqref{eq:itNnonl} by our analysis in Section~\ref{sec:itlin}, we see that $\norm{\psi_N(\un{n+1})}_X\to 0$ as~$n\to\infty$.

\begin{proof}
 By invoking the triangle inequality and \eqref{eq:errestnonlin}, we find that
\[
  \norm{u^\star-\un{n+1}}_X \leq \nu^{-1}\eta(\un{n+1}) + \norm{u^\star-\rec{n+1}}_X.
\]
Moreover, due to (F2), we observe that 
\begin{align*}
 \Fsm \norm{u^\star-\rec{n+1}}^2_X &\leq \dprod{\F(u^\star)-\F(\rec{n+1}),u^\star-\rec{n+1}} = \dprod{\F(\rec{n+1}),\rec{n+1}-u^\star}.
 \end{align*}
By using~\eqref{eq:newnonlinear}, and upon applying the Cauchy-Schwarz inequality, this leads to
\[
 \Fsm \norm{u^\star-\rec{n+1}}^2_X \leq (\psi_N(\un{n+1}),\rec{n+1}-u^\star)_X  \leq \norm{\psi_N(\un{n+1})}_X \norm{\rec{n+1}-u^\star}_X.
\]
This yields the claim.
\end{proof}

\begin{remark}
We compare the a posteriori error estimators from Theorem~\ref{thm:aposteriorierror} and Theorem~\ref{thm:aposterioriNG}: The proof of Theorem~\ref{thm:aposteriorierror}---and thus the constants in the bound~\eqref{eq:linestimator}---strongly depend on the choice of the preconditioning operator $\A$, i.e. on the specific iterative linearization method. Moreover, the same comment applies for the computable bound $\eta(\un{n+1},\un{n})$. In contrast, the estimator from Theorem~\ref{thm:aposterioriNG}, resulting from the nonlinear elliptic construction, as well as the computable bound $\eta(\un{n})$ are completely independent of the iteration scheme, and merely relies on the underlying PDE problem. We note further that the a posteriori error estimator from Theorem~\ref{thm:aposteriorierror} allows for more general reflexive Banach spaces~$X$, whereas Theorem~\ref{thm:aposterioriNG} requires a Hilbert space setting.   
\end{remark}


\section{An abstract ILG procedure}\label{sec:adaptivealgorithm}

The estimates from Theorems~\ref{thm:aposteriorierror} and~\ref{thm:aposterioriNG} allow to control the error between the solution of~\eqref{eq:F=0} and the discrete system~\eqref{eq:itN1} with respect to two individual terms, one of which expresses the error of the linearization, and will be denoted by $\mathcal{E}_{{\rm Linear},N}^{n}$, and the other, which we signify by $\mathcal{E}_{{\rm Galerkin},N}^{n}$, bounds the Galerkin discretization error. In a finite element context, the latter error will typically be composed of local contributions for each element; this, in turn, enables to refine the mesh locally. The algorithm, which will be presented below, uses an \emph{adaptive interplay} between those two controlling terms. More precisely, on a given Galerkin space, we iterate as long as the linearization error dominates and, in addition, until it is, in a certain way, smaller than a given bound depending on~the number of Galerkin space enrichments, $N$. Once the linearization error is small enough, and is up to a factor $\vartheta$ less than the one arising from the Galerkin method, we enrich the Galerkin space according to the local error indicators in order to attain a smaller discretization error. Subsequently, we will perform the linearization on the enriched space. In this way, the goal of the ILG algorithm is to compute an approximation of the solution of \eqref{eq:F=0} which, on the one hand, is sufficiently accurate, and, on the other hand, is attained from a minimal number of iterations.

\subsection{Adaptive ILG algorithm}
For the purpose of this section, we assume that our ILG Algorithm~\ref{alg:abstractadaptivealgorithm}, to be presented below, generates a sequence of hierarchically enriched Galerkin spaces, $X_0 \subset X_1 \subset X_2 \subset \dots$, on each of which we perform at least one iterative step. Furthermore, we will make use of a prescribed positive function $\sigma:\mathbb{N} \to (0,\infty)$ which satisfies
\begin{align} \label{eq:sigmaproperty}
 \sigma(N) \to 0 \ \text{ for } \ N \to \infty.
\end{align}
Its role is to ensure that the linearization error tends to zero for an increasing number~$N$ of Galerkin space enrichments. For instance, in the context of the finite element method, a sensible choice is $\sigma(N)=\mathcal{O}(|\mathcal{T}_N|^{-s})$, where $|\mathcal{T}_N|$ signifies the number of elements in the mesh, and $s$ is the expected convergence rate; cf.~Section~\ref{sec:numexp} below. Recall that, for any fixed $N \geq 0$, our theory in Section~\ref{sec:itlin} guarantees, under certain conditions, that the difference $\norm{u_N^n-u_N^{n-1}}_X$ tends to zero for increasing $n$; in particular, it can be made smaller than $\sigma(N)$ for $n$ large enough. 

An adaptive ILG procedure for the interactive reduction of discretization and linearization errors is proposed in Algorithm~\ref{alg:abstractadaptivealgorithm}. We note that this algorithm can be performed with any of the iterative procedures from Section~\ref{sec:particulariterations}, and with either the error estimators obtained from the linear or nonlinear elliptic reconstructions from Section~\ref{sec:aposteriorianal}. The input and output arguments as well as the components of the implemented algorithm may, of course, depend on the error estimator and the specific iterative linearization scheme applied.

\begin{algorithm}
\caption{Adaptive ILG algorithm}
\label{alg:abstractadaptivealgorithm} 
\begin{algorithmic}[1]
\State Prescribe a tolerance $\tol>0$, and an adaptivity parameter $\vartheta>0$. Set $N:=0$ and $n:=0$. Start with an initial Galerkin space $X_0 \subset X$, and an initial guess $u^0_0 \in X_0$.
\Repeat
\State Set $\mathcal{E}_{{\rm Linear},N}^{n}:=1$ and $\mathcal{E}_{{\rm Galerkin},N}^n:=0$.
\While {$\mathcal{E}_{{\rm Galerkin},N}^n \leq \vartheta \mathcal{E}_{{\rm Linear},N}^{n}$ or $\norm{u_N^n-u_N^{n-1}}_X > \sigma(N)$}
\State Perform a single iterative linearization step to obtain~$u_N^{n+1}$ from~$u_N^{n}$; cf.~\eqref{eq:itN1}.
\State Estimate the linearization error $\mathcal{E}_{{\rm Linear},N}^{n+1}$ and the Galerkin error indicator $\mathcal{E}_{{\rm Galerkin},N}^{n+1}$.
\State Update~$n\gets n+1$.
\EndWhile
\State \multiline{Let $u^{n^\star}_N:=u_N^n \in X_N$, and enrich the Galerkin space $X_N$ appropriately based on the error indicator $\mathcal{E}_{{\rm Galerkin},N}^n$ in order to obtain $X_{N+1}$.}
\State Set $\mathcal{E}_{\rm{tot}}:=\mathcal{E}_{{\rm Linear},N}^{n}+\mathcal{E}_{{\rm Galerkin},N}^n$.
\State Define $u_{N+1}^0 := u^{n^\star}_N$ by inclusion $X_{N+1} \hookleftarrow X_N$.
\State Update $N\gets N+1$, and set~$n:=0$.
\Until {$\mathcal{E}_{{\rm tot}} < \tol$.}\\
\Return the sequence of discrete solutions $u^{n^\star}_N \in X_N$. 
\end{algorithmic}
\end{algorithm}

\subsection{A remark on convergence}
Given a Galerkin space~$X_N$, we recall the solution~$\u_N\in X_N$ of \eqref{eq:itNnonl}. Furthermore, we let, as in the Algorithm~\ref{alg:abstractadaptivealgorithm}, $u_N^{n^\star}\in X_N$ be the final approximation on $X_N$ (i.e.~before the Galerkin space is enriched). We establish the convergence of $u^{n^\star}_N$ to the unique solution $u^\star$ of \eqref{eq:F=0} under the following  assumption:
 \begin{enumerate}[(AG)]
  \item The hierarchically enriched Galerkin spaces $X_0 \subset X_1 \subset X_2\subset\ldots$ generated by Algorithm~\ref{alg:abstractadaptivealgorithm} are such that the iterative Galerkin approximations $\u_N \in X_N$ from~\eqref{eq:itNnonl} converge to the exact solution $u^\star \in X $ of \eqref{eq:F=0} for ~$N\to\infty$. 
 \end{enumerate}
 
 \begin{proposition}
If $\F$ from~\eqref{eq:F=0} fulfills~{\rm (F1)} and~{\rm (F2)} (cf.~Section~\ref{sec:abstract}), and the Galerkin method satisfies~{\rm (AG)}, then Algorithm~\ref{alg:abstractadaptivealgorithm}based on an iterative linearization scheme~\eqref{eq:itN1}, with $a(u;\cdot,\cdot)$ satisfying the properties~\eqref{eq:coercive} and~\eqref{eq:continuity}, generates a sequence $\{u_N^{n^\star}\}_{N \geq 0}$ which converges to the unique solution $u^\star \in X$ of \eqref{eq:F=0}.
 \end{proposition}

\begin{proof}
Using (F2) and involving \eqref{eq:itNnonl}, it holds that
 \begin{align*}
  \Fsm \norm{\u_N-u_N^{n^\star-1}}_X^2 &\leq \dprod{\F(\u_N)-\F(u_N^{n^\star-1}),\u_N-u_N^{n^\star-1}} = \dprod{\F(u_N^{n^\star-1}),u_N^{n^\star-1}-\u_N}.
 \end{align*}
Invoking \eqref{eq:f(u)}, \eqref{eq:itN1}, and \eqref{eq:continuity}, we obtain
\begin{align*}
 \Fsm \norm{\u_N-\un{n^\star-1}}_X^2 \leq a(\un{n^\star-1};\un{n^\star-1}-\un{n^\star},\un{n^\star-1}-\u_N) \leq \beta \norm{\un{n^\star}-\un{n^\star-1}}_X \norm{\un{n^\star-1}-\u_N}_X,
\end{align*}
and thus
\begin{align*} 
\norm{\u_N-u_N^{n^\star-1}}_X \leq \frac{\beta}{\Fsm} \norm{\un{n^\star}-\un{n^\star-1}}_X.
\end{align*}
By the triangle inequality, this leads to 
\begin{align}\label{eq:bounduNun}
 \norm{\u_N-\un{n^\star}}_X \leq \norm{\u_N-\un{n^\star-1}}_X+\norm{\un{n^\star}-\un{n^\star-1}}_X \leq \left(\frac{\beta}{\Fsm}+1\right)\norm{\un{n^\star}-\un{n^\star-1}}_X.
\end{align}
Notice that the stopping criterion for the while loop in Algorithm~\ref{alg:abstractadaptivealgorithm} implies that
 \begin{align} \label{eq:assdifference}
  \norm{\un{n^*}-\un{n^*-1}}_X \leq \sigma(N) \qquad \forall N \geq 0,
 \end{align}
 with $\sigma$ satisfying~\eqref{eq:sigmaproperty}. Then, invoking the triangle inequality, as well as~\eqref{eq:bounduNun} and~\eqref{eq:assdifference}, yields \begin{align*}
  \norm{u^\star-\un{n^\star}}_X \leq \norm{u^\star-\u_N}_X+\norm{\u_N-\un{n^\star}}_X \leq \norm{u^\star-\u_N}_X+\left(\frac{\beta}{\Fsm}+1\right)\sigma(N).
 \end{align*}
The first term on the right-hand side tends to zero for $N \to \infty$ by virtue of~(AG), and the same holds true for the second term due to~\eqref{eq:sigmaproperty}. We deduce that $u_N^{n^\star} \to u^\star$ as $N \to \infty$. 
\end{proof}

In our subsequent paper \cite{HeidWihler:19} we further analyze the convergence of the adaptive ILG algorithm. In fact, we establish the linear convergence rate of our algorithm, with a slightly different a posteriori error estimator, under reasonable assumptions. For instance, in the context of finite element discretizations of second-order PDE in divergence form, cf.~Section~\ref{sec:scl}, we state in \cite{HeidWihler:19} an a posteriori error estimator which guarantees the linear convergence regime.

\section{Application to second-order PDE in divergence form} \label{sec:scl}

In this section, we will apply our analytical findings to the quasilinear elliptic PDE problem
\begin{align} \label{eq:operatorscl}
u\in X:\qquad \F(u):= - \nabla \cdot \left\{\mu\left(\left|\nabla u\right|^2\right) \nabla{u}\right\}-g=0\qquad\text{in }X'.
\end{align}
Here, $\Omega \subset \mathbb{R}^d$, for $d \in \mathbb{N}$, is an open and bounded domain with Lipschitz boundary $\Gamma:= \partial \Omega$, and $X:=H_0^1(\Omega)$ is the standard Sobolev space of $H^1$-functions on $\Omega$ with zero trace along the boundary $\Gamma$; the inner product and norm on~$X$ are defined, respectively, by~$(u,v)_X:=(\nabla u,\nabla v)_{L^2(\Omega)}$ and~$\norm{u}_X:=\|\nabla u\|_{L^2(\Omega)}$, for $u,v\in X$. Equations of the form~\eqref{eq:operatorscl} are widely used in mathematical models of physical applications including, for instance, hydro- and gas-dynamics, or plasticity; we refer to~\cite[\S69.2--69.3] {Zeidler:88} and~\cite[\S1.1]{Astala:09} for a discussion of the physical meaning. We suppose that $g \in X'=H^{-1}(\Omega)$ in~\eqref{eq:operatorscl} is given, and $\mu \in C^1([0,\infty))$ fulfills
\begin{align} \label{en:assmu}
m_\mu(t-s) \leq \mu(t^2)t-\mu(s^2)s \leq M_\mu (t-s), \qquad t \geq s \geq 0,
\end{align}
with constants $m_\mu, M_\mu>0$. In particular, upon setting $s=0$, we observe that 
\begin{align} \label{eq:mubound}
 m_\mu \leq \mu(t^2) \leq M_\mu \qquad \forall t \geq 0.
\end{align}
Under condition~\eqref{en:assmu} it can be shown that the nonlinear operator~$\F$ from~\eqref{eq:operatorscl} satisfies the properties~(F1) and~(F2) with
\begin{align}\label{eq:nuL}
\nu=m_{\mu},\qquad\Flc=3M_{\mu};
\end{align} 
see~\cite[Proposition~25.26]{Zeidler:90}.

We note the weak form of the boundary value problem~\eqref{eq:operatorscl} in $X$:
\begin{align}\label{eq:sclweak}
u\in X:\qquad \int_\Omega \muf{u} \nabla u \cdot \nabla v \dx = \dprod{g,v} \qquad \forall v \in X.
\end{align}

\subsection{Convergence of iterative linearizations}\label{sec:itscl}

In the sequel, we will investigate the convergence of the various iteration schemes from Section~\ref{sec:particulariterations} as applied to the PDE~\eqref{eq:operatorscl}. The convergence of the Zarantonello iteration follows immediately from Theorem~\ref{thm:zarantonello}.

\begin{proposition}
If $\mu$ satisfies~\eqref{en:assmu} and $\F$ is given by \eqref{eq:operatorscl}, then the Zarantonello iteration \eqref{eq:zarantonelloit}, i.e.
\[
u^{n+1}\in X:\qquad-\Delta u^{n+1}=-\Delta u^n+\delta\nabla \cdot \left\{\mu\left(\left|\nabla u^n\right|^2\right) \nabla{u^n}\right\}+\delta g,\qquad n\ge 0,
\] 
converges to the unique solution of \eqref{eq:operatorscl} for any $\delta \in \left]0,\nicefrac{2 m_\mu}{9 M^2_\mu}\right[$.
\end{proposition}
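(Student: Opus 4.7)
The proof is essentially an invocation of Theorem~\ref{thm:zarantonello} with the parameters already computed for this concrete operator. The plan is to check that the hypotheses (F1) and (F2) of that theorem are available with the explicit constants $\nu = m_\mu$ and $\Flc = 3M_\mu$, and then to observe that the admissible interval $\left]0,\nicefrac{2\nu}{\Flc^2}\right[$ from Theorem~\ref{thm:zarantonello} matches precisely $\left]0,\nicefrac{2 m_\mu}{9 M_\mu^2}\right[$.

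Concretely, the first step is to cite the fact established immediately before the statement (equation~\eqref{eq:nuL}, quoting \cite[Proposition~25.26]{Zeidler:90}): under the two-sided bound~\eqref{en:assmu} on $\mu$, the operator $\F$ from~\eqref{eq:operatorscl}, understood in the weak sense~\eqref{eq:sclweak}, is Lipschitz continuous with constant $\Flc=3M_\mu$ and strongly monotone with constant $\Fsm=m_\mu$. No further work is needed to verify (F1) and (F2).

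The second step is to translate the abstract iteration~\eqref{eq:zarantonelloit} into the strong form claimed in the proposition. Since $X=H_0^1(\Omega)$ is equipped with the inner product $(u,v)_X=(\nabla u,\nabla v)_{L^2(\Omega)}$, the Riesz--Fréchet isometry $\J:X\to X^\star$ coincides with $-\Delta$ (as an operator $H_0^1(\Omega)\to H^{-1}(\Omega)$). Thus~\eqref{eq:zarantonelloit}, which in strong form reads $u^{n+1}=u^n-\delta\,\J^{-1}\F(u^n)$, becomes
\[
-\Delta u^{n+1}=-\Delta u^n-\delta\F(u^n)=-\Delta u^n+\delta\nabla\!\cdot\!\left\{\mu\left(|\nabla u^n|^2\right)\nabla u^n\right\}+\delta g,
\]
which is the displayed equation in the proposition.

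Finally, applying Theorem~\ref{thm:zarantonello} with $\Fsm=m_\mu$ and $\Flc=3M_\mu$ yields convergence of $\{u^n\}_{n\ge0}$ to the unique solution $u^\star$ of~\eqref{eq:operatorscl} for every
\[
\delta\in\left]0,\tfrac{2\Fsm}{\Flc^2}\right[=\left]0,\tfrac{2m_\mu}{9M_\mu^2}\right[,
\]
which is exactly the asserted range. There is no genuine obstacle here; the only subtlety worth flagging is the identification of $\J^{-1}$ with the inverse Laplacian, but this is immediate from the choice of inner product on $X=H_0^1(\Omega)$.
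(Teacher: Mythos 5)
Your proposal is correct and follows exactly the paper's route: the paper likewise treats this as an immediate consequence of Theorem~\ref{thm:zarantonello}, using the constants $\Fsm=m_\mu$ and $\Flc=3M_\mu$ from~\eqref{eq:nuL} so that $\nicefrac{2\Fsm}{\Flc^2}=\nicefrac{2m_\mu}{9M_\mu^2}$. Your additional identification of the Riesz map with $-\Delta$ to recover the strong form is a correct (and welcome) explicit remark that the paper leaves implicit.
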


In order to study the Ka\v{c}anov iteration method for~\eqref{eq:operatorscl}, let us define, for~$u\in X$, the linear preconditioning operator 
\begin{align} \label{eq:kacanovobjects}
\A(u)v:=-\nabla \cdot \left\{\muf{u} \nabla v\right\},\qquad v\in X.
\end{align}
In addition to~\eqref{en:assmu}, we assume that $\mu$ is monotone decreasing, i.e.
\begin{equation}\label{en:assmu2}
\mu'(t) \leq 0\qquad\forall t \geq 0.
\end{equation} 

\begin{proposition}
Let $\mu$ satisfy~\eqref{en:assmu} and \eqref{en:assmu2}. Then, the Ka\v{c}anov iteration~\eqref{eq:kacanovstrong}, i.e.
\[
u^{n+1}\in X:\qquad-\nabla \cdot \left\{\muf{u^n} \nabla u^{n+1}\right\}=g,\qquad n\ge 0,
\]
converges to the unique solution of \eqref{eq:operatorscl}.
\end{proposition}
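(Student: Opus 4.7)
The plan is to apply Theorem~\ref{thm:kacanov} with the preconditioning operator~\eqref{eq:kacanovobjects}, which corresponds to the bilinear form
\[
a(u;v,w)=\int_\Omega\muf{u}\nabla v\cdot\nabla w\dx.
\]
First I would verify the abstract conditions on $\A$ from Section~\ref{sec:abstract}: symmetry of $a(u;\cdot,\cdot)$ is immediate, while the two-sided bound~\eqref{eq:mubound} on $\mu$ directly yields the coercivity~\eqref{eq:coercive} and continuity~\eqref{eq:continuity} with $\alpha=m_\mu$ and $\beta=M_\mu$.

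The central construction is the candidate potential. With $\Psi$ denoting an antiderivative of $\mu$, I set
\[
\G(u):=\frac12\int_\Omega\Psi(|\nabla u|^2)\dx,
\]
and a direct Gâteaux differentiation gives $\dprod{\G'(u),v}=\int_\Omega\muf{u}\nabla u\cdot\nabla v\dx=a(u;u,v)$, which establishes the first part of~(K1). For the continuity and strong monotonicity of $\G'$, I would exploit the identity $\G'(u)=\F(u)+g$: by~(F1) and~(F2) applied to $\F$ with the constants from~\eqref{eq:nuL}, $\G'$ inherits Lipschitz continuity and strong monotonicity with constant $c_0=m_\mu$, completing~(K1).

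To verify~(K2), the required inequality reduces, after expansion, to the pointwise bound $\Psi(s)-\Psi(t)\ge\mu(s)(s-t)$ for all $s,t\ge 0$. Writing $\Psi(s)-\Psi(t)=\int_t^s\mu(\tau)\,\d\tau$ and invoking~\eqref{en:assmu2}, which tells us that $\mu$ is monotone decreasing (equivalently, $\Psi$ is concave), this estimate follows by distinguishing the cases $s\ge t$ and $s<t$ and bounding $\mu(\tau)$ on the corresponding integration interval. Integrating the pointwise inequality over $\Omega$ with $s=|\nabla u|^2$ and $t=|\nabla v|^2$ delivers~(K2).

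The decisive ingredient is the monotonicity assumption~\eqref{en:assmu2}: it encodes precisely the concavity of the energy density $\Psi$ that is needed to secure the descent step~\eqref{eq:H2} exploited in Theorem~\ref{thm:kacanov}. I do not expect any serious obstacle beyond correctly identifying $\Psi$ and handling the sign in the concavity inequality; once (K1), (K2), and the abstract conditions on $\A$ are in place, Theorem~\ref{thm:kacanov} applies and yields convergence of the Kačanov sequence $\{u^n\}_{n\ge 0}$ to the unique solution of~\eqref{eq:operatorscl}.
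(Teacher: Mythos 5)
Your proposal is correct and follows essentially the same route as the paper: verifying coercivity and continuity of $a(u;\cdot,\cdot)$ with $\alpha=m_\mu$, $\beta=M_\mu$ via \eqref{eq:mubound}, introducing the potential $\G(u)=\int_\Omega\psi(|\nabla u|^2)\dx$ with $\psi'=\tfrac12\mu$, obtaining (K1) from $\G'(u)=\F(u)+g$ together with (F1)--(F2), and proving (K2) by the same pointwise concavity estimate based on \eqref{en:assmu2} before invoking Theorem~\ref{thm:kacanov}. No substantive differences from the paper's argument.
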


\begin{proof}
We will show that the assumptions of Theorem~\ref{thm:kacanov} are satisfied. To this end, for~$\A$ from~\eqref{eq:kacanovobjects}, and any~$u\in X$, we define the symmetric bilinear form~$a(u;v,w):=\dprod{\A(u)v,w}$, for $v,w\in X$. Then, using~\eqref{eq:mubound} in combination with the Cauchy-Schwarz inequality shows the coercivity and continuity properties~\eqref{eq:coercive} and~\eqref{eq:continuity} with
\begin{equation}\label{eq:Kab}
\alpha=m_\mu,\qquad \beta=M_\mu,
\end{equation}
respectively.
Furthermore, we introduce the potential $\G:X \to  \mathbb{R}$ by 
\begin{equation}\label{eq:psi}
\G(u):=\int_\Omega \psi\left(\left|\nabla u\right|^2\right) \dx,\qquad
\text{with}\quad 
\psi(s):=\frac12\int_0^s \mu(t) \dt. 
\end{equation}
For~$u\in X$, taking the Gateaux derivative of $\G$, we find that
\begin{align*}
\dprod{\G'(u),v}&=\int_\Omega 2 \psi'\left(\left|\nabla u\right|^2\right)\nabla u \cdot \nabla v \dx =\int_\Omega \muf{u} \nabla u \cdot \nabla v \dx =a(u;u,v),
\end{align*}
for any~$v\in X$. Thus, we infer that $\G'(u)=a(u;u,\cdot)=\F(u)+g$. Recalling~(F2),  this implies the strong monotonicity property~\eqref{eq:Gmono} with~$c_0=m_\mu$, and we conclude that (K1) holds true. In addition, due to~\eqref{en:assmu2}, 
for any $t\ge s\ge 0$, it holds that 
\begin{align*}
\psi(t)-\psi(s)
=\frac12 \int_s^t \mu(\tau) \, \d \tau \geq \frac12 (t-s)\mu(t),
\end{align*}
and similarly for~$s\ge t\ge 0$,
\begin{align*}
\psi(t)-\psi(s)
=-\frac12 \int_t^s \mu(\tau) \, \d \tau \geq -\frac12 (s-t)\mu(t)
=\frac12(t-s)\mu(t).
\end{align*}
Hence, for any~$u,v\in X$, we have
\begin{align*}
\G(u)-\G(v) 
&\geq \frac12 \int_\Omega \muf{u} \left(\left|\nabla u\right|^2-\left| \nabla v \right|^2\right) \dx 
= \frac12 \left(a(u;u,u)-a(u;v,v)\right),
\end{align*}
which shows (K2).
\end{proof}

Finally, we turn our attention to the damped Newton iteration. 

\begin{proposition} \label{prop:newton}
Let $\mu$ satisfy \eqref{en:assmu} and \eqref{en:assmu2}. Moreover, suppose that the damping parameter $\delta:\,X\to[\delta_{\min},\delta_{\max}]$ is a continuous functional, for some constants~$\delta_{\min},\delta_{\max}$, with~$0<\delta_{\min}\le\delta_{\max}<\nicefrac{2 m_\mu}{3 M_\mu}$. Then, the damped Newton iteration~\eqref{eq:newtonstrong} for the nonlinear PDE~\eqref{eq:operatorscl} converges to its unique solution in~$X$.
\end{proposition}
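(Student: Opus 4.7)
The plan is to verify the hypotheses of Theorem~\ref{thm:newton}. Conditions~(F1) and~(F2) are already provided by~\eqref{eq:nuL} with $\Fsm=m_\mu$ and $\Flc=3M_\mu$, while~(N4) is part of the assumption. Once~(N1) is established with $\fpco=m_\mu$, the threshold $\delta_{\max}<\nicefrac{2\fpco}{\Flc}$ of Theorem~\ref{thm:newton} reduces precisely to the hypothesis $\delta_{\max}<\nicefrac{2m_\mu}{3M_\mu}$. It thus remains to verify~(N1)--(N3).

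For~(N1), a direct differentiation of $\F$ produces
\begin{align*}
\dprod{\F'(u)v,w}=\int_\Omega\left\{\mu(|\nabla u|^2)\nabla v\cdot\nabla w+2\mu'(|\nabla u|^2)(\nabla u\cdot\nabla v)(\nabla u\cdot\nabla w)\right\}\dx,
\end{align*}
from which the symmetry~\eqref{eq:symF} is immediate. The key observation for coercivity is the pointwise bound
\begin{align*}
m_\mu\le\mu(t^2)+2t^2\mu'(t^2)\le M_\mu\qquad\forall t\ge 0,
\end{align*}
which follows by applying the mean value theorem to the scalar map $t\mapsto\mu(t^2)t$ (together with a limiting argument at $t=0$) and invoking~\eqref{en:assmu}. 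Combining this with $\mu'\le 0$ from~\eqref{en:assmu2} and the elementary estimate $(\nabla u\cdot\nabla v)^2\le|\nabla u|^2|\nabla v|^2$ yields $\dprod{\F'(u)v,v}\ge m_\mu\|v\|_X^2$, so that $\fpco=m_\mu$. The boundedness~\eqref{eq:N12} follows along the same lines, using $\mu\le M_\mu$ together with the resulting sign bound $0\le-2t^2\mu'(t^2)\le M_\mu-m_\mu$ and Cauchy--Schwarz, producing some $\fpbd>0$ independent of~$u$.

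For~(N2) and~(N3), I propose the potentials
\begin{align*}
\G(u):=\int_\Omega\left\{|\nabla u|^2\mu(|\nabla u|^2)-\psi(|\nabla u|^2)\right\}\dx,\qquad\H(u):=\int_\Omega\psi(|\nabla u|^2)\dx-\dprod{g,u},
\end{align*}
with $\psi$ as in~\eqref{eq:psi}. A direct differentiation confirms $\G'(u)=\F'(u)u$ in~$X^\star$ and $\H'(u)=\F(u)$; the weak continuity of $\G'$ required in~(N2) follows from the continuity of $\mu$ and $\mu'$ together with dominated convergence. With (N1)--(N4) in force and the threshold condition verified, Theorem~\ref{thm:newton} applies and delivers the claim. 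I expect the main obstacle to be the uniform coercivity of $\F'$, where the two-sided estimate~\eqref{en:assmu} and the monotonicity~\eqref{en:assmu2} of~$\mu$ must be used simultaneously to absorb the \emph{negative} contribution of $\mu'$; the remaining verifications amount to direct calculations with the explicit potentials displayed above.
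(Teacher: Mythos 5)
Your proposal is correct and takes essentially the same route as the paper: it verifies (N1)--(N4) for Theorem~\ref{thm:newton} using the explicit representation of $\F'(u)$, the pointwise bound $m_\mu\le\mu(t^2)+2t^2\mu'(t^2)\le M_\mu$ derived from \eqref{en:assmu}, the Cauchy--Schwarz absorption of the negative $\mu'$-term to get $\fpco=m_\mu$ (and $\fpbd=2M_\mu-m_\mu$), and the potentials $\H$ and $\G$ --- indeed your explicit $\G(u)=\int_\Omega\{|\nabla u|^2\mu(|\nabla u|^2)-\psi(|\nabla u|^2)\}\dx$ is exactly the paper's choice $\G=\dprod{\F(\cdot),\cdot}-\H$ written out. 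The only step you treat more lightly is the weak continuity of $u\mapsto\F'(u)u$, which the paper proves in Lemma~\ref{lemma:munewton} by extracting a subsequence with a.e.\ convergent gradients, applying dominated convergence with the uniform bound above as dominating function, and then upgrading to the whole sequence; your appeal to ``continuity of $\mu,\mu'$ plus dominated convergence'' is the right idea but should spell out these steps, since norm convergence in $X$ alone does not give a.e.\ convergence of the gradients.
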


We will prove this proposition by showing that the assumptions of Theorem~\ref{thm:newton} are satisfied. For this purpose we require the following auxiliary result.

\begin{lemma}\label{lemma:munewton}
If $\mu$ satisfies \eqref{en:assmu}, then the operator~$u\mapsto\F'(u)u$ is continuous from~$X$ to~$X'$ with respect to the weak topology on~$X'$.
\end{lemma}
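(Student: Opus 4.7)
The plan is to compute $\F'(u)u$ explicitly, exploit assumption~\eqref{en:assmu} to obtain a uniform pointwise bound on the coefficient it produces, and then invoke a standard subsequence-plus-dominated-convergence argument to pass to the limit.

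First I would observe that the Gâteaux derivative of $\F$ from~\eqref{eq:operatorscl} is characterized in weak form by
\[
\dprod{\F'(u)v,w}=\int_\Omega\left\{2\mu'(|\nabla u|^2)(\nabla u\cdot\nabla v)(\nabla u\cdot\nabla w)+\mu(|\nabla u|^2)\nabla v\cdot\nabla w\right\}\dx,
\]
for $u,v,w\in X$. Specializing to $v=u$ and grouping terms yields
\[
\dprod{\F'(u)u,w}=\int_\Omega\lambda\!\left(|\nabla u|^2\right)\nabla u\cdot\nabla w\dx,\qquad\text{where}\quad \lambda(s):=\mu(s)+2s\mu'(s).
\]
The crucial observation is that, for $t\ge 0$,
\[
\lambda(t^2)=\frac{\operator{d}}{\operator{d}t}\bigl[\mu(t^2)t\bigr],
\]
so that dividing the two-sided inequality in~\eqref{en:assmu} by $t-s$ and sending $s\to t^-$ gives the uniform bound $m_\mu\le\lambda(s)\le M_\mu$ for all $s\ge 0$. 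Hence the map $u\mapsto \lambda(|\nabla u|^2)\nabla u$ sends $X$ boundedly into $L^2(\Omega)^2$.

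Next I would show the stronger statement that $u\mapsto\lambda(|\nabla u|^2)\nabla u$ is in fact \emph{strongly} continuous from $X$ to $L^2(\Omega)^2$; weak continuity of $\F'(u)u$ into $X^\star$ then follows at once by Cauchy--Schwarz. Suppose $u_k\to u$ in $X$, i.e.\ $\nabla u_k\to\nabla u$ in $L^2(\Omega)^2$. By the standard subsequence principle it suffices to prove that every subsequence has a further subsequence along which $\lambda(|\nabla u_k|^2)\nabla u_k\to\lambda(|\nabla u|^2)\nabla u$ in $L^2(\Omega)^2$. Along a suitable sub-subsequence (not relabelled), $\nabla u_k\to\nabla u$ almost everywhere in $\Omega$, and there exists $h\in L^2(\Omega)$ with $|\nabla u_k|\le h$ a.e. Since $\mu\in C^1([0,\infty))$, both $\mu$ and $\mu'$ are continuous, so $\lambda$ is continuous and $\lambda(|\nabla u_k|^2)\nabla u_k\to\lambda(|\nabla u|^2)\nabla u$ a.e.\ in $\Omega$. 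The uniform bound on $\lambda$ gives the integrable majorant $M_\mu h\in L^2(\Omega)$, and Lebesgue's dominated convergence theorem yields the desired $L^2$-convergence.

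Finally, for any $w\in X$,
\[
\left|\dprod{\F'(u_k)u_k-\F'(u)u,w}\right|\le\twon{\lambda(|\nabla u_k|^2)\nabla u_k-\lambda(|\nabla u|^2)\nabla u}{\Omega}\cdot\|w\|_X\to 0,
\]
which gives the weak convergence $\F'(u_k)u_k\rightharpoonup\F'(u)u$ in $X^\star$ and completes the argument. The only real obstacle is the pointwise bound on $\lambda$, and the identification $\lambda(t^2)=\frac{\operator{d}}{\operator{d}t}[\mu(t^2)t]$ extracts it painlessly from~\eqref{en:assmu}; everything else is dominated convergence plus the subsequence principle.
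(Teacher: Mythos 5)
Your proof is correct and follows essentially the same route as the paper's: the explicit weak representation of $\F'(u)u$, the uniform bound $m_\mu\le\lambda\le M_\mu$ extracted by differentiating \eqref{en:assmu}, extraction of an a.e.-convergent subsequence of gradients, dominated convergence, and the subsequence principle. The only (harmless) organizational difference is that by dominating $|\nabla u_k|$ itself with an $L^2$ majorant you establish strong continuity of $u\mapsto\lambda(|\nabla u|^2)\nabla u$ into $L^2(\Omega)^2$, hence norm continuity of $u\mapsto\F'(u)u$ into $X^\star$, which is slightly stronger than the weak continuity obtained in the paper via its two-term splitting of the difference.
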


\begin{proof}
By taking the limit~$s\nearrow t$ in \eqref{en:assmu}, we infer that $m_\mu \leq \frac{\mathrm{d}}{\dt}\left(\mu(t^2)t\right) \leq M_\mu$, and, thereby, 
\begin{equation}\label{eq:auxlem}
m_\mu \leq 2 \mu'(t^2)t^2+\mu(t^2) \leq M_\mu\qquad\forall t\ge 0.
\end{equation}
Moreover, a simple but lengthy calculation shows that 
\begin{equation}\label{eq:rep}
\dprod{\F'(u)v,w}=\int_{\Omega} 2 \mu'(|\nabla u|^2)(\nabla u \cdot \nabla v)(\nabla u \cdot \nabla w) \dx + \int_{\Omega} \mu(|\nabla u|^2)\nabla v \cdot \nabla w \dx,
\end{equation}
for any~$u,v,w\in X$. Consider a sequence~$\{u^k\}_{k\ge 0}\subset X$ which converges to a limit~$u\in X$, i.e.
\begin{equation}\label{eq:cp1}
\norm{u-u^k}_X\to 0, \qquad k\to\infty.
\end{equation} 
Since $X=H_0^1(\Omega)$, we find that $\nabla u^k \to \nabla u$ in $L^2(\Omega)$ for $k \to \infty$. Thus, there is a subsequence such that
\begin{equation}\label{eq:cp2}
\nabla u^{k'}\to \nabla u\quad\text{a.e.~in~$\Omega$ for }k'\to\infty, 
\end{equation}
see, e.g., \cite[Theorem 3.12]{Rudin:87}.
Hence, defining the function~$\omega(t):=2\mu'(t)t+\mu(t)$, $t\ge 0$, it holds
\begin{align*}
\dprod{\F'(u)u-\F'(u^{k'})u^{k'},w}
&=\int_\Omega\left(\omega(|\nabla u|^2)-\omega(|\nabla u^{k'}|^2)\right)\nabla u\cdot\nabla w\dx\\
&\quad+\int_\Omega\omega(|\nabla u^{k'}|^2)\nabla(u-u^{k'})\cdot\nabla w\dx.
\end{align*}
We note that both terms on the right-hand side tend to~$0$ as~$k'\to\infty$: Indeed, for the first integral this follows from the continuity of~$\omega$, \eqref{eq:cp2}, \eqref{eq:auxlem}, and the dominated convergence theorem; for the second integral, we recall~\eqref{eq:auxlem} and~\eqref{eq:cp1}. Finally, referring to~\cite[Proposition~10.13(2)]{Zeidler:86}, we conclude the weak convergence of the entire sequence, i.e. $\F'(u^{k})u^{k}\rightharpoonup\F'(u)u$ as~$k\to\infty$. This finishes the proof.
\end{proof}

\begin{proof}[Proof of Proposition~\ref{prop:newton}]
For~$v=w$ in \eqref{eq:rep} we have that
\[
 \dprod{\F'(u)v,v}=\int_{\Omega} 2 \mu'(|\nabla u|^2)|\nabla u \cdot \nabla v|^2 + \int_{\Omega} \mu(|\nabla u|^2)|\nabla v|^2 \dx.
\]
Exploiting~\eqref{en:assmu2}, and using the Cauchy-Schwarz inequality, we notice that
\begin{align*}
 2 \mu'(|\nabla u|^2) |\nabla u \cdot \nabla v|^2 \geq 2 \mu'(|\nabla u|^2) |\nabla u|^2 |\nabla v|^2.
\end{align*}
It follows that
\begin{align*}
\dprod{\F'(u)v,v}
 &\ge \int_\Omega \left(2 \mu'(|\nabla u|^2) |\nabla u|^2+\muf{u}\right) |\nabla v|^2 \dx.
\end{align*}
Applying~\eqref{eq:auxlem} implies that $\dprod{\F'(u)v,v}
\geq m_\mu \norm{v}_X^2$ for any~$u,v\in X$; this shows~\eqref{eq:N11} with
\begin{equation}\label{eq:Na}
\fpco=m_\mu.
\end{equation}
In addition, in view of~\eqref{eq:nuL}, we observe that~$\nicefrac{2\fpco}{\Flc}=\nicefrac{2m_\mu}{3M_\mu}>\delta_{\max}$, as required in Theorem~\ref{thm:newton}.
Furthermore, application of the Cauchy-Schwarz inequality, and involving~\eqref{en:assmu2}, yields 
\begin{align*}
 \dprod{\F'(u)v,w} 
 &\leq \int_\Omega \left(\left|2 \mu'(|\nabla u|^2)\right| \left|\nabla u\right|^2+\muf{u}\right) |\nabla v||\nabla w| \dx\\
  &= -\int_\Omega \left(2 \mu'(|\nabla u|^2) \left|\nabla u\right|^2+\muf{u}\right) |\nabla v||\nabla w| \dx\\
&\quad  +2\int_\Omega \muf{u} |\nabla v||\nabla w| \dx.
\end{align*}
Employing~\eqref{eq:auxlem} and~\eqref{eq:mubound}, this leads to
\begin{align*}
 \dprod{\F'(u)v,w} &\leq (2 M_\mu-m_\mu) \int_\Omega |\nabla v| |\nabla w| \dx \leq (2 M_\mu-m_\mu) \norm{v}_X \norm{w}_X,
\end{align*}
which gives~\eqref{eq:N12} with
\begin{equation}\label{eq:Nb}
\fpbd=2M_\mu-m_\mu.
\end{equation} 
In order to prove~(N3), let us define the functional $\operator{H}:X \to  \mathbb{R}$ by 
\begin{align*}
\operator{H}(u):=\int_\Omega \psi\left(\left|\nabla u\right|^2\right) \dx-\dprod{g,u},\qquad u\in X,
\end{align*}
with~$\psi$ as in~\eqref{eq:psi}. It holds that
\begin{align*}
\dprod{\operator{H}'(u),v}=\int_\Omega \muf{u} \nabla u \cdot \nabla v\dx-\dprod{g,v} =\dprod{\F(u),v},
\end{align*}
for all $v \in X$. Finally, to establish~(N2), we introduce the functional $\G:X \to \mathbb{R}$ by $\G(u):=\mathcal{F}(u)-\operator{H}(u)$, where $\mathcal{F}(u):=\dprod{\F(u),u}$, $u\in X$. For~$u\in X$, the Gateaux derivative of $\mathcal{F}$ is given by
\begin{align*}
 \dprod{\mathcal{F}'(u),v}
 &= \dprod{\F'(u)u,v}+\dprod{\F(u),v}\qquad\forall v\in X.
\end{align*}
It follows that $\G'(u)=\mathcal{F}'(u)-\operator{H}'(u)=\F'(u)u+\F(u)-\F(u)=\F'(u)u$. Finally, due to Lemma~\ref{lemma:munewton} the mapping $u \mapsto \G'(u)=\F'(u)u$ is continuous with respect to the weak topology on $X'$. 
\end{proof}

\subsection{Iterative linearized FEM}\label{sec:ILFEM}

For the sake of discretizing \eqref{eq:sclweak}, and thereby, of obtaining an ILG formulation for~\eqref{eq:operatorscl}, we will use a conforming finite element framework. To illustrate our approach we deal with a physical domain $\Omega \subset \mathbb{R}^2$; we remark, however, that the discussion below can, in principle, be generalized to higher dimensions. We consider regular and shape-regular meshes $\mathcal{T}_h$ that partition the domain~$\Omega$ into open and disjoint triangles~$K\in\mathcal{T}_h$ such that $\overline{\Omega}=\bigcup_{K \in \mathcal{T}_h} \overline K$.
We denote by $h_K:=\diam{K}$ the diameter of $K \in \mathcal{T}_h$, and let $h:=\max_{K \in \mathcal{T}_h} h_K$. Moreover, we consider the finite element space 
\begin{align}\label{eq:Xh}
X_h:=\left\{v \in H^1_0(\Omega): v|_K \in \mathcal{P}_p(K) \ \forall K \in \mathcal{T}_h\right\},
\end{align}
where, for fixed~$p\in\mathbb{N}$, we signify by $\mathcal{P}_p(K)$ the space of all polynomials of total degree at most $p \geq 1$ on $K\in\mathcal{T}_h$. 

Within the adaptive ILG framework, we will consider a sequence of meshes~$\{\mathcal{T}_N\}_{N\ge0}$, whereby we start with an initial conforming triangulation~$\mathcal{T}_0$ of $\Omega$. All subsequent meshes are obtained by refinement, i.e. for~$N\ge 0$, the mesh $\mathcal{T}_{N+1}$ is a hierarchical refinement of $\mathcal{T}_N$. Moreover, we will denote by $X_N$ the finite element space associated to the mesh $\mathcal{T}_N$.

For an edge $e \subset \partial K^+ \cap \partial K^{-}$, which is the intersection of two neighbouring elements $K^{\pm} \in \mathcal{T}_N$, we signify by $\jmp{\bm v}|_e=\bm{v}^{+}|_e \cdot \bm{n}_{K^+}+\bm{v}^{-}|_e\cdot \bm{n}_{K^{-}}$ the jump of a (vector-valued) function $\bm{v}$ along~$e$, where $\bm{v}^{\pm}|_e$ denote the traces of the function $\bm{v}$ on the edge $e$ taken from the interior of $K^{\pm}$, respectively, and $\bm{n}_{K^{\pm}}$ are the unit outward normal vectors on $\partial K^{\pm}$, respectively.

\subsubsection{A posteriori error analysis via linear elliptic reconstruction}

In this section, we discuss the a posteriori error estimate from Theorem~\ref{thm:aposteriorierror} in the specific context of the nonlinear PDE~\eqref{eq:operatorscl} and the finite element framework presented above. Introducing the residual
\[
\R(u;v,w):=a(u;v,w)-\dprod{f(u),w},\qquad u,v\in X_N,\, w\in X,
\]
it is fairly straightforward to verify that, for all of the three iterative linearization schemes from Section~\ref{sec:itscl}, and for~$g\in L^2(\Omega)$ in~\eqref{eq:operatorscl}, it holds the special form
\[
\R(\un{n};\un{n+1},w)=-\int_{\Omega}\mathbf{q}^n_N \cdot \nabla w \dx+ \int_{\Omega}p^n_N w \dx \qquad\forall w \in X,
\]
with some $p^n_N \in L^2(\Omega)$ and $\mathbf{q}^n_N \in H^1(\Omega)^2$, which can be represented explicitly.
Then, recalling~\eqref{eq:itN1}, we may conclude that
\[
\R(\un{n};\un{n+1},w)
=\R(\un{n};\un{n+1},w-w_N)
=-\int_{\Omega}\mathbf{q}^n_N \cdot \nabla (w-w_N) \dx+ \int_{\Omega}p^n_N (w-w_N) \dx,
\]
for any~$w_N\in X_N$. Therefore, choosing~$w_N$ to be a quasi-interpolant of~$w$, and pursuing a standard residual-based a posteriori error analysis (see, e.g., \cite{Verfurth:13}), we deduce the upper bound
\begin{align*}
\sup_{\substack{w \in X \\ \norm{w}_X=1}} \R(\un{n};\un{n+1},w) \leq \coc\left(\sum_{K \in \mathcal{T}_N} \eta_K^2\right)^{\nicefrac{1}{2}},
\end{align*}
where~$\coc>0$ is an interpolation constant (only depending on the polynomial degree~$p$ and on the shape-regularity of the mesh), and
\begin{align}
\eta_K^2&= h_K^{2} \twon{ \nabla \cdot \mathbf{q}^n_N+ p^n_N}{K}^2 +  \frac{1}{2} h_K\twon{\jmp{\mathbf{q}^n_N}}{\partial K \setminus \Gamma}^2,\qquad K\in\mathcal{T}_N,\label{eq:eta1}
\end{align}
is a computable error indicator.

\begin{theorem} \label{thm:aposterioriLSCL}
Let $\F$ be defined by \eqref{eq:operatorscl} with $\mu$ fulfilling~\eqref{en:assmu} and \eqref{en:assmu2}, and let $X_N\subset H^1_0(\Omega)$ be a conforming finite element space as in~\eqref{eq:Xh} on a mesh~$\mathcal{T}_N$. If $u^\star$ is the unique solution of \eqref{eq:operatorscl}, and $\{\un{n}\}_{n \geq 0}$ is a sequence of ILG solutions obtained by any of the iterative linearization procedures from Section~\ref{sec:itscl} on $X_N$, then it holds the a posteriori estimate
\[
  \norm{u^\star-\un{n+1}}_X \leq \frac{\beta \coc}{\alpha m_\mu} \left(\sum_{K \in \mathcal{T}_N} \eta_K^2\right)^{\nicefrac{1}{2}} + \frac{\beta+3M_\mu}{m_\mu} \norm{\un{n+1}-\un{n}}_X,
\]
where $\coc>0$ is a constant, and
\[
(\alpha,\beta)=\begin{cases}
(\delta^{-1},\delta^{-1})&\text{for the Zarantonello iteration, cf.~\eqref{eq:Zab}},\\
(m_\mu,M_\mu)&\text{for the Ka\v{c}anov iteration, cf.~\eqref{eq:Kab}},\\
(\nicefrac{m_\mu}{\delta_{\max}},\nicefrac{(2M_\mu-m_\mu)}{\delta_{\min}})&\text{for the Newton iteration, cf.~\eqref{eq:Nab}, \eqref{eq:Na}, and~\eqref{eq:Nb}},\\
\end{cases}
\]
and~$\eta_K$, for~$K\in\mathcal{T}_N$, is defined in~\eqref{eq:eta1}.
\end{theorem}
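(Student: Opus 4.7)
The plan is pure assembly: the abstract bound of Theorem~\ref{thm:aposteriorierror} reads
\[
\|u^\star-\un{n+1}\|_X\le\frac{\beta}{\alpha\nu}\eta(\un{n+1},\un{n})+\frac{\beta+\Flc}{\nu}\|\un{n+1}-\un{n}\|_X,
\]
so I need only substitute the appropriate values of $\nu,\Flc,\alpha,\beta,\eta$ for the PDE \eqref{eq:operatorscl}. Hypotheses (F1) and (F2) for $\F$ follow from \eqref{en:assmu} with $\nu=m_\mu$ and $\Flc=3M_\mu$ as recorded in \eqref{eq:nuL}; this immediately produces the linearization prefactor $(\beta+3M_\mu)/m_\mu$ and the discretization prefactor $\beta/(\alpha m_\mu)$.

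Next, for each iterative scheme I would identify the pair $(\alpha,\beta)$ already computed in Section~\ref{sec:itscl}: for Zarantonello, \eqref{eq:Zab} gives $\alpha=\beta=\delta^{-1}$; for Ka\v{c}anov, the proof of Proposition~\ref{prop:kacanov} combined with \eqref{eq:mubound} delivers \eqref{eq:Kab}, i.e.\ $(\alpha,\beta)=(m_\mu,M_\mu)$; for the damped Newton scheme, Proposition~\ref{prop:newton} together with \eqref{eq:Na}--\eqref{eq:Nb} yields $\fpco=m_\mu$ and $\fpbd=2M_\mu-m_\mu$, which, inserted into \eqref{eq:Nab}, produces $\alpha=m_\mu/\delta_{\max}$ and $\beta=(2M_\mu-m_\mu)/\delta_{\min}$. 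These are exactly the three pairs listed in the theorem.

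The only nontrivial step is justifying the computable residual bound $\eta(\un{n+1},\un{n})\le\coc(\sum_K\eta_K^2)^{1/2}$ asserted in \eqref{eq:linresbound}. The discussion preceding the theorem shows that, in all three schemes, the linear residual admits the representation
\[
\R(\un{n};\un{n+1},w)=a(\un{n};\un{n+1},w)-\dprod{f(\un{n}),w}=-\int_\Omega\mathbf{q}_N^n\cdot\nabla w\dx+\int_\Omega p_N^n\,w\dx
\]
for suitable $p_N^n\in L^2(\Omega)$ and $\mathbf{q}_N^n\in H^1(\Omega)^2$ obtained from $\A[\un{n}]$ and $f(\un{n})=\A[\un{n}]\un{n}-\F(\un{n})$. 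Galerkin orthogonality \eqref{eq:itN1} then permits the replacement $w\mapsto w-w_N$ for any $w_N\in X_N$. Choosing $w_N$ to be a Cl\'ement/Scott--Zhang quasi-interpolant of $w\in X$, integrating by parts element-wise, and invoking the standard local interpolation estimates (plus the bounded-overlap property of the interpolation patches) together with Cauchy--Schwarz yields
\[
\sup_{\|w\|_X=1}\R(\un{n};\un{n+1},w)\le\coc\Bigl(\sum_{K\in\mathcal{T}_N}\eta_K^2\Bigr)^{1/2},
\]
with $\eta_K$ as in \eqref{eq:eta1}. Taking this as $\eta(\un{n+1},\un{n})$ in the abstract bound completes the proof.

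The main obstacle is conceptual rather than analytical: one must verify that the three superficially different linearized equations share the common residual structure with volume term $p_N^n$ and flux $\mathbf{q}_N^n$, which amounts to a case-by-case algebraic unwinding of $\A[\cdot]$ and $f(\cdot)$. Once this is done, the remainder is a textbook residual-based finite element estimate combined with insertion of the constants already established earlier in the paper.
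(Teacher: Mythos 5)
Your proposal is correct and follows exactly the paper's route: the paper's proof of Theorem~\ref{thm:aposterioriLSCL} is likewise a direct application of Theorem~\ref{thm:aposteriorierror} with $\nu=m_\mu$, $\Flc=3M_\mu$ from~\eqref{eq:nuL}, the scheme-specific pairs $(\alpha,\beta)$ from~\eqref{eq:Zab}, \eqref{eq:Kab}, and \eqref{eq:Nab}--\eqref{eq:Nb}, and the residual bound $\eta(\un{n+1},\un{n})\le\coc(\sum_K\eta_K^2)^{\nicefrac12}$ established in the discussion preceding the theorem via Galerkin orthogonality and quasi-interpolation. No gaps.
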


\begin{proof}
The result follows from Theorem~\ref{thm:aposteriorierror}, whereby we replace the constants $\nu$ and~$\Flc$ from~\eqref{eq:nuL}, and insert the values of $\alpha$ and $\beta$ from~\eqref{eq:coercive} and~\eqref{eq:continuity} for the respective iterative schemes from Section~\ref{sec:itscl}.
\end{proof}

\subsubsection{Error estimator via \emph{nonlinear} elliptic reconstruction} 

Following our abstract analysis in Section~\ref{sec:aposterioriNGM}, we consider the residual
\[
\R(\un{n+1}):=\sup_{\stackrel{w\in X}{\|w\|_X=1}}\left\{(\psi_N(\un{n+1}),w)_X-\dprod{\F(\un{n+1}),w}\right\}.
\]
Noticing~\eqref{eq:R1}, for any~$w_N\in X_N$, we have
\[
\R(\un{n+1}):=\sup_{\stackrel{w\in X}{\|w\|_X=1}}\left\{(\psi_N(\un{n+1}),w-w_N)_X-\dprod{\F(\un{n+1}),w-w_N}\right\}.
\]
Then, for $g\in L^2(\Omega)$ in~\eqref{eq:operatorscl}, and~$w_N\in X_N$ an appropriate quasi-interpolant of~$w\in H^1_0(\Omega)$, we employ a standard residual-based a posteriori error analysis (see, e.g., \cite{Verfurth:13}) to infer the upper bound
\begin{align*}
\R(\un{n+1}) \leq \coc \left(\sum_{K \in \mathcal{T}_N} \eta_K^2\right)^{\nicefrac{1}{2}},
\end{align*}
where $\coc$ is a quasi-interpolation constant, and 
\begin{equation}\label{eq:eta2}
\begin{split}
\eta^2_K&=h_K^2 \twon{\Delta \psi_N(\un{n+1})+g+ \nabla \cdot \left\{\muf{\un{n+1}} \nabla \un{n+1}\right\}}{K}^2 \\
& \quad + \frac{1}{2} h_K \twon{ \jmp{\nabla \psi_N(\un{n+1})+\muf{\un{n+1}} \nabla \un{n+1}}}{\partial K \setminus \Gamma}^2,
\end{split}
\end{equation}
for any $K \in \mathcal{T}_N$. Then, invoking Theorem~\ref{thm:aposterioriNG} and recalling~\eqref{eq:nuL}, we obtain the following result.

\begin{theorem} \label{thm:nonlinearV1}
Given the same assumptions as in Theorem~\ref{thm:aposterioriLSCL}, then it holds the a posteriori error estimate
\begin{align*}
 \norm{u^\star-\un{n+1}}_X \leq \frac{\coc}{m_\mu} \left(\sum_{K \in \mathcal{T}_N} \eta_K^2\right)^{\nicefrac{1}{2}}+\frac{1}{m_\mu} \norm{\psi_N(\un{n+1})}_{L^2(\Omega)},
\end{align*}
where $u^\star$ is the unique solution of \eqref{eq:operatorscl}, $\coc$ is a constant,
and~$\eta_K$, for~$K\in\mathcal{T}_N$, is given in~\eqref{eq:eta2}. 
\end{theorem}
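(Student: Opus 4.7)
The plan is to recognize that Theorem~\ref{thm:nonlinearV1} is the concrete instantiation of the abstract a~posteriori bound in Theorem~\ref{thm:aposterioriNG} for the PDE~\eqref{eq:operatorscl} with the finite element space~\eqref{eq:Xh}. The ingredients are already assembled immediately before the statement: the residual bound has been sketched, and Theorem~\ref{thm:aposterioriNG} yields
\[
\norm{u^\star-\un{n+1}}_X \leq \frac{1}{\nu}\eta(\un{n+1})+\frac{C_E}{\Fsm}\norm{\psi_N(\un{n+1})}_H.
\]
Therefore, I would first read off the constants from the present setting, using~\eqref{eq:nuL} to replace $\Fsm=\nu$ by $m_\mu$, identifying $H=L^2(\Omega)$, and invoking the Poincar\'e--Friedrichs inequality~\eqref{eq:pf} to set $C_E=\pcc$. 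What remains is to substantiate the bound $\eta(\un{n+1})\le\coc\,(\sum_K\eta_K^2)^{\nicefrac12}$ with $\eta_K$ as in~\eqref{eq:eta2}.

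The second step is the residual bound itself. Starting from
\[
\R(\un{n+1})=\sup_{\|w\|_X=1}\left\{(\psi_N(\un{n+1}),w)_H-\dprod{\F(\un{n+1}),w}\right\},
\]
the defining property~\eqref{eq:R1} of $\psi_N$ yields Galerkin orthogonality: the quantity inside the supremum vanishes for every $w\in X_N$. Hence we may replace $w$ by $w-w_N$ for any quasi-interpolant $w_N\in X_N$. I would then insert the explicit form $\F(v)=-\nabla\cdot\{\mu(|\nabla v|^2)\nabla v\}-g$, integrate by parts element-wise (this requires $g\in L^2(\Omega)$ for the interior residual to lie in $L^2$), and collect volume contributions with integrand $\psi_N(\un{n+1})+g+\nabla\cdot\{\mu(|\nabla\un{n+1}|^2)\nabla\un{n+1}\}$ on each $K$, together with edge contributions involving the normal flux jumps $\jmp{\mu(|\nabla\un{n+1}|^2)\nabla\un{n+1}}$ over interior edges; these are precisely the terms appearing in~\eqref{eq:eta2}.

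The third step is purely technical: choosing $w_N$ to be a Cl\'ement- or Scott--Zhang-type quasi-interpolant and applying the standard local estimates
\[
\twon{w-w_N}{K}\le C\,h_K\,\twon{\nabla w}{\omega_K},\qquad \twon{w-w_N}{e}\le C\,h_e^{\nicefrac12}\,\twon{\nabla w}{\omega_e},
\]
followed by Cauchy--Schwarz and the finite overlap of the patches $\omega_K,\omega_e$, yields
\[
\R(\un{n+1})\le \coc\,\bigl(\textstyle\sum_{K\in\mathcal{T}_N}\eta_K^2\bigr)^{\nicefrac12}\,\|w\|_X,
\]
with a constant $\coc>0$ depending only on the polynomial degree $p$ and the shape-regularity of the mesh. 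Inserting this into the abstract bound finishes the proof.

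The main obstacle is essentially bookkeeping rather than a genuine difficulty: one has to track that the generic abstract residual estimator $\eta(\un{n+1})$ from Section~\ref{sec:aposterioriNGM} coincides with the computable sum of local indicators~\eqref{eq:eta2}, and that the interpolation constants are uniform along the hierarchy $\mathcal{T}_0\subset\mathcal{T}_1\subset\ldots$ arising in the adaptive loop; this uniformity is guaranteed by shape-regularity of the refinement strategy. Everything else is a direct consequence of Theorem~\ref{thm:aposterioriNG} together with the constants already computed in Section~\ref{sec:itscl}.
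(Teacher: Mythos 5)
Your proposal is correct and follows essentially the same route as the paper: the residual is bounded via the Galerkin orthogonality implied by~\eqref{eq:R1}, elementwise integration by parts and quasi-interpolation to obtain $\coc\bigl(\sum_{K\in\mathcal{T}_N}\eta_K^2\bigr)^{\nicefrac12}$ with $\eta_K$ as in~\eqref{eq:eta2}, and the result then follows by inserting this into the abstract bound of Theorem~\ref{thm:aposterioriNG} with $\nu=m_\mu$ from~\eqref{eq:nuL} and $C_E=\pcc$ from~\eqref{eq:pf}. No gaps.
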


\subsection{Numerical Experiments} \label{sec:numexp}

In this section, we test the adaptive ILG Algorithm~\ref{alg:abstractadaptivealgorithm} in the context of the iterative linearized FEM for second-order PDE in divergence form discussed in Section~\ref{sec:scl}. We perform a series of numerical experiments to compare the various iterative linearization procedures from Section \ref{sec:particulariterations} and to validate the \textit{a posteriori} error estimators from Section~\ref{sec:ILFEM}. For all our experiments, we consider the L-shaped domain $\Omega=(-1,1)^2 \setminus ([0,1] \times [-1,0])$, and an initial mesh consisting of 192 uniform triangles. Moreover, we will always choose the initial guess to be $u^0 \equiv 0$, and run the algorithm until the number of elements exceeds $10^6$. On a given mesh, we perform at least one iterative linearization step, and continue until the linearization error is at most half as large as the discretization error, i.e.~we let $\vartheta=2$ in Algorithm~\ref{alg:abstractadaptivealgorithm}. Furthermore, for a given constant~$\Upsilon>0$, we let
\[
\sigma(N):=\Upsilon\left|\mathcal{T}_N\right|^{-\nicefrac12}\norm{u_0^1}_X, \qquad N \geq 0,
\]
which relates to the expected convergence rate of~$\mathcal{O}(|\mathcal{T}_N|^{-\nicefrac12})$; in our experiments below the choice~$\Upsilon=10$ has proved to be a sensible value. Moreover, we set the constant factors for the discretization and linearization estimators appearing in the right-hand sides of the a posteriori error bounds to~1 (cf.~Theorems~\ref{thm:aposterioriLSCL} and \ref{thm:nonlinearV1}). In the adaptive process, we mark the elements for refinement by use of the D\"orfler marking strategy, see \cite{Doerfler:96}, and process them by the newest vertex bisection method, see \cite{Mitchell:91}. The true error $\norm{u^\star-u_N^n}_X$ and the error estimator will be displayed each time before a mesh refinement is undertaken. Our implementation is based on the Matlab package~\cite{FunkenPraetoriusWissgott:11}, with the necessary modifications. 

In the Experiments~\ref{sec:smoothsolution}--\ref{sec:increasingdiffusion} below we consider the different iterative procedures discussed in Section~\ref{sec:particulariterations}. For the problems under consideration, our computations consistently indicate that, in the a posteriori error estimates from Theorem~\ref{thm:aposterioriLSCL} and Theorem~\ref{thm:nonlinearV1}, the discretization part  clearly dominantes the linearization contribution. Not surprisingly, after a brief initial mesh refinement phase, the algorithm only undertakes one iterative linearization step per space enrichment, i.e. our algorithm is highly efficient for the proposed examples. Moreover, both the discretization and linearization error indicators generally converge at the expected rate of $\mathcal{O}(|\mathcal{T}_N|^{-\nicefrac{1}{2}})$. More precisely, this holds true for any iterative scheme except for the damped Newton method (in combination with the a posteriori error estimator from Theorem~\ref{thm:nonlinearV1}), where the linearization error estimator exhibits an even higher convergence rate; this may result from the local quadratic convergence property of the Newton iteration.

\subsubsection{Smooth solution}\label{sec:smoothsolution}
We consider the nonlinear diffusion coefficient $\mu(t)=(t+1)^{-1}+\nicefrac{1}{2}$,
for $t \geq 0$, and select~$g$ in~\eqref{eq:operatorscl} such that the analytical solution of \eqref{eq:sclweak} is given by the smooth function
$u^\star(x,y)=\sin(\pi x) \sin(\pi y)$. It is straightforward to verify that $\mu$ fulfills the requirements \eqref{en:assmu} and \eqref{en:assmu2} from Section~\ref{sec:scl}, so that the convergence of the three iterative procedures from Section~\ref{sec:particulariterations} is guaranteed. The parameter $\dpa$ in the Zarantonello iteration~\eqref{eq:zarantonelloit} is chosen to be $0.85$ as this seems to be close to optimal. The initial damping parameter on the initial mesh for the damped Newton method is chosen to be $\dpa^{0}=1$ in Remark~\ref{rem:adnewton}; moreover, throughout all our experiments, the factor $\kappa$ for the correction and prediction strategy of the damping parameter is set to be $\nicefrac{1}{2}$. 

In Figure~\ref{fig:smoothsolution}, for each of the three iterative linearization schemes presented in Section~\ref{sec:itscl}, we plot the error $\norm{u^\star-u_N^n}_X$ and both error estimators from Theorems~\ref{thm:aposterioriLSCL} and~\ref{thm:nonlinearV1} against the number~$|\mathcal{T}_N|$ of elements in the mesh. In addition, we display the effectivity indices for each experiment, i.e. the ratio of the error estimator and the true error; we see that they are roughly bounded between 2 and~4. Furthermore, we notice that (nearly) optimal convergence rates $\mathcal{O}\left(|\mathcal{T}_N|^{-\nicefrac{1}{2}}\right)$ are achieved in all plots.

\begin{figure}[htb] 
 \subfloat[Zarantonello iteration with the a posteriori error bound from Theorem~\ref{thm:aposterioriLSCL}.]{\includegraphics[width=0.499\textwidth]{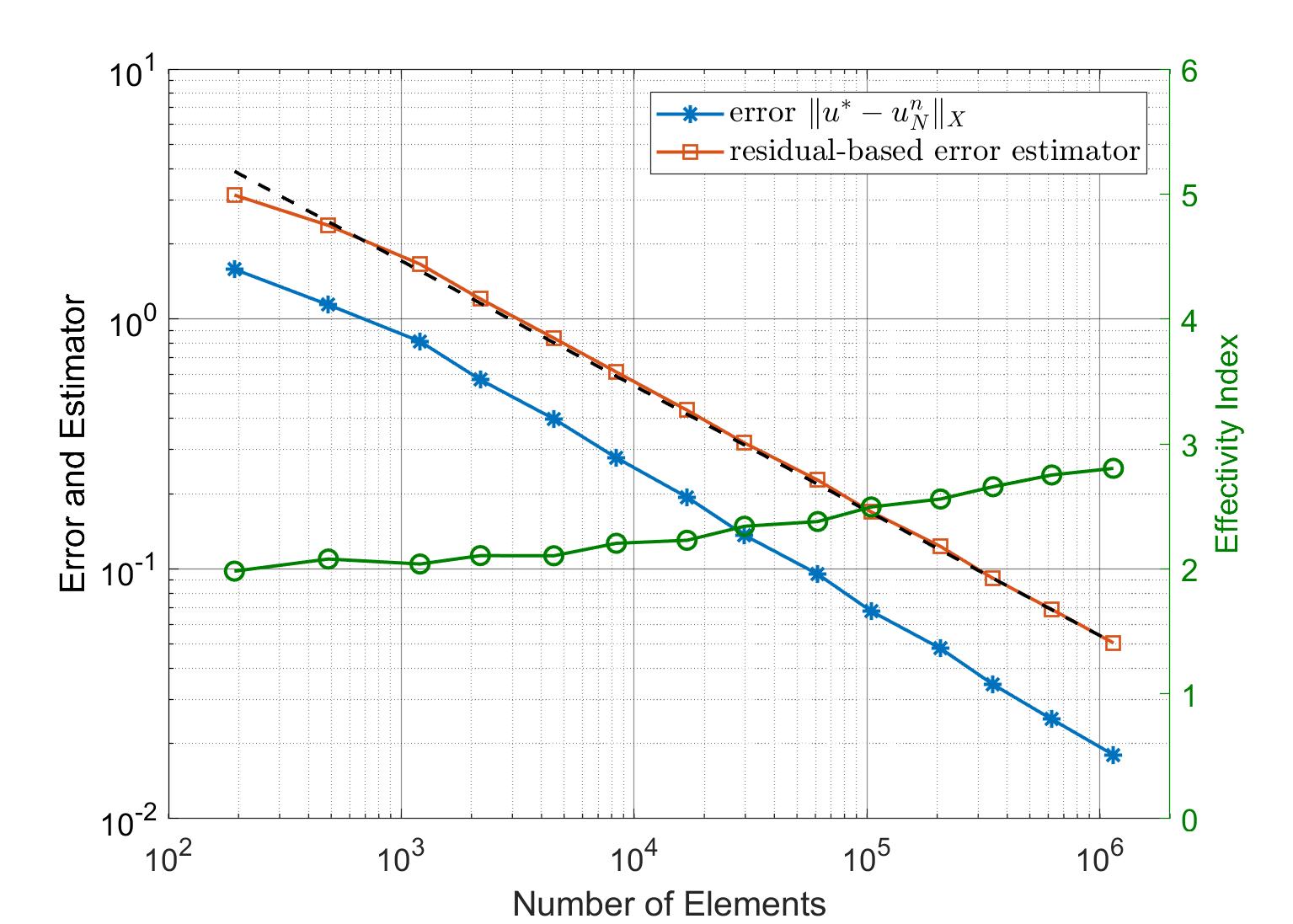}}\hfill
 \subfloat[Zarantonello iteration with the a posteriori error bound from Theorem~\ref{thm:nonlinearV1}.]{\includegraphics[width=0.499\textwidth]{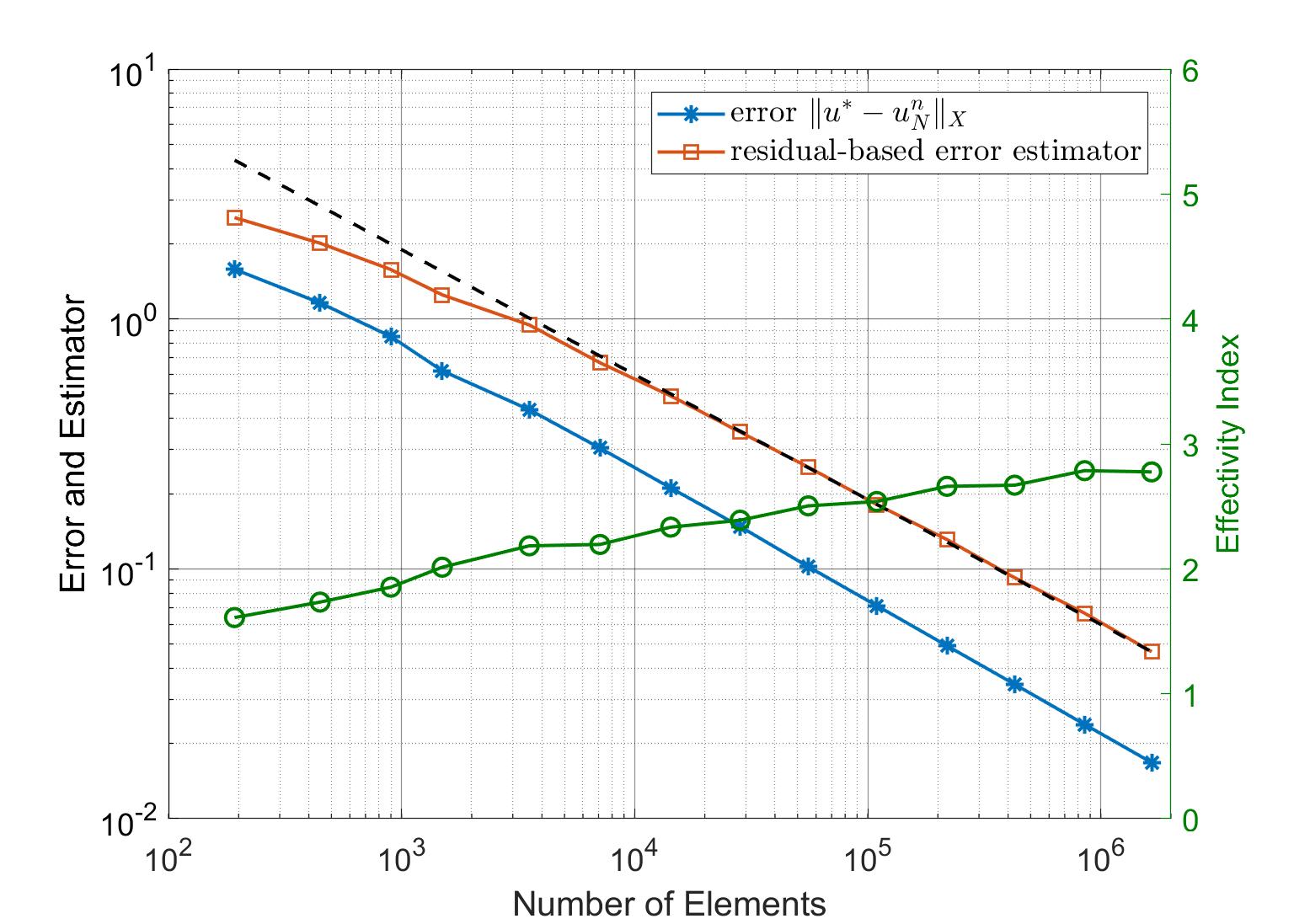}}\\[-1ex]
 \subfloat[Ka\v{c}anov iteration with the a posteriori error bound from Theorem~\ref{thm:aposterioriLSCL}.]{\includegraphics[width=0.499\textwidth]{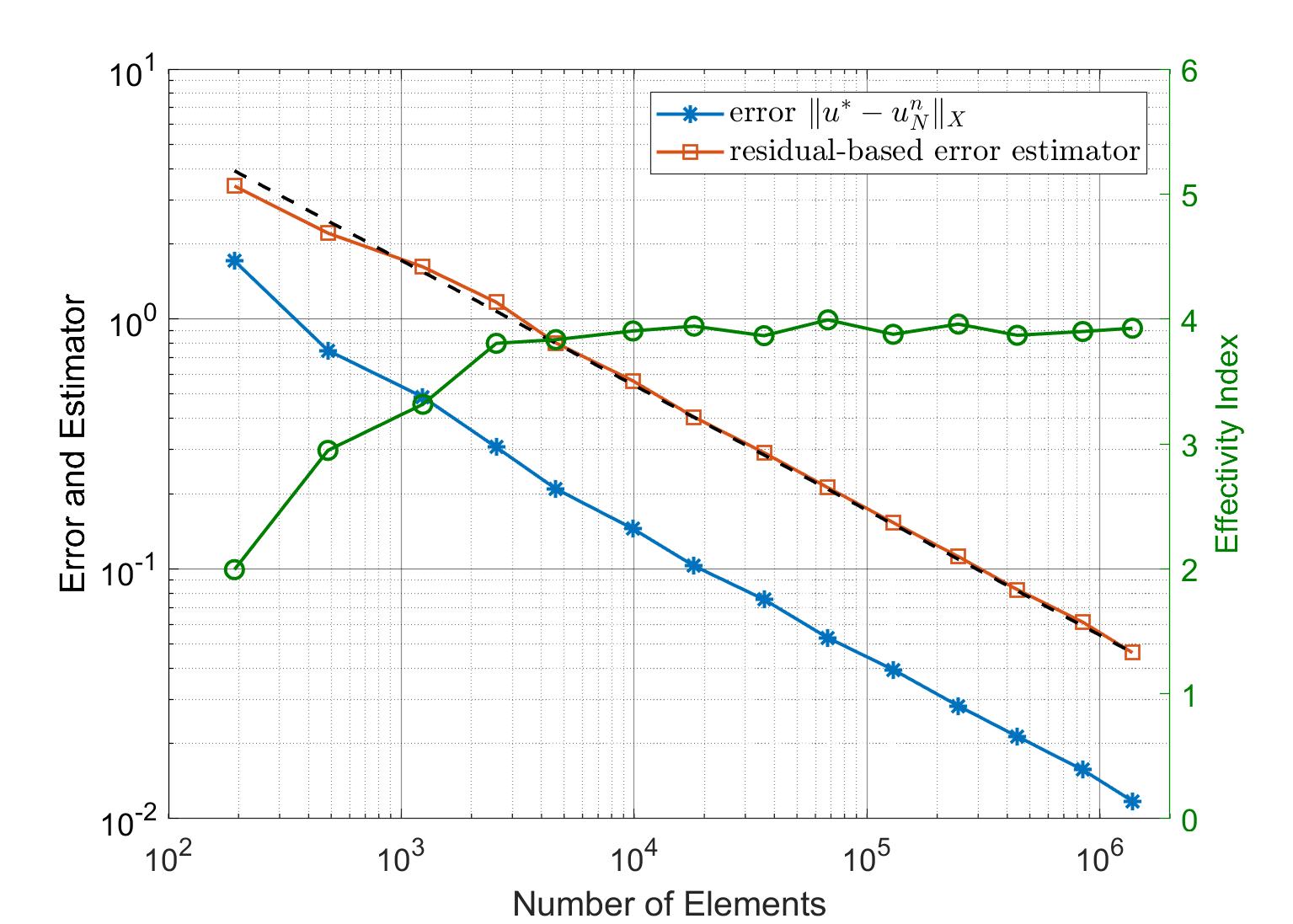}}\hfill 
 \subfloat[Ka\v{c}anov iteration with the a posteriori error bound from Theorem~\ref{thm:nonlinearV1}.]{\includegraphics[width=0.499\textwidth]{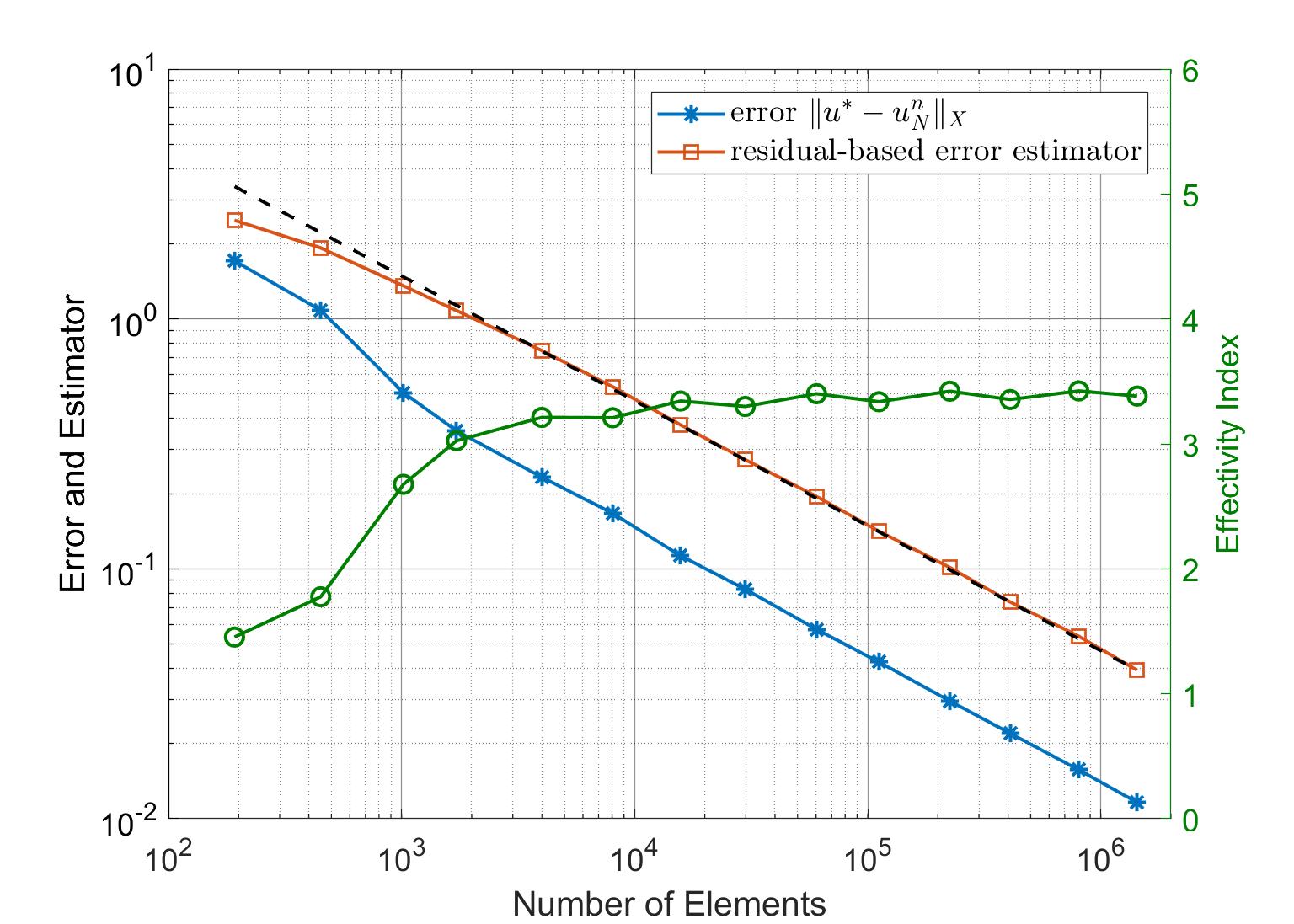}}\\[-1ex]
 \subfloat[Damped Newton iteration with the a posteriori error bound from Theorem~\ref{thm:aposterioriLSCL}.]{\includegraphics[width=0.499\textwidth]{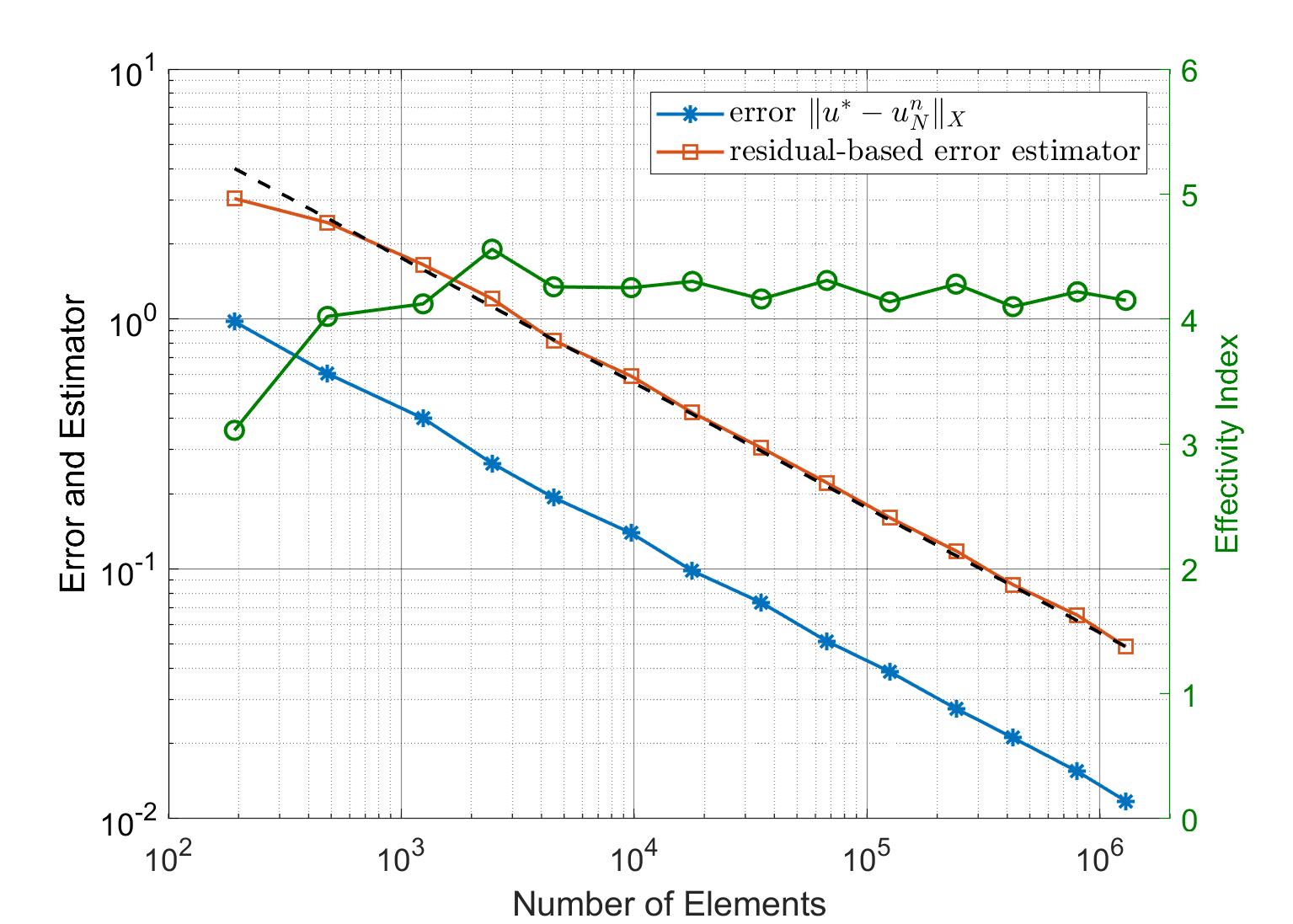}}\hfill
 \subfloat[Damped Newton iteration with the a posteriori error bound from Theorem~\ref{thm:nonlinearV1}.]{\includegraphics[width=0.499\textwidth]{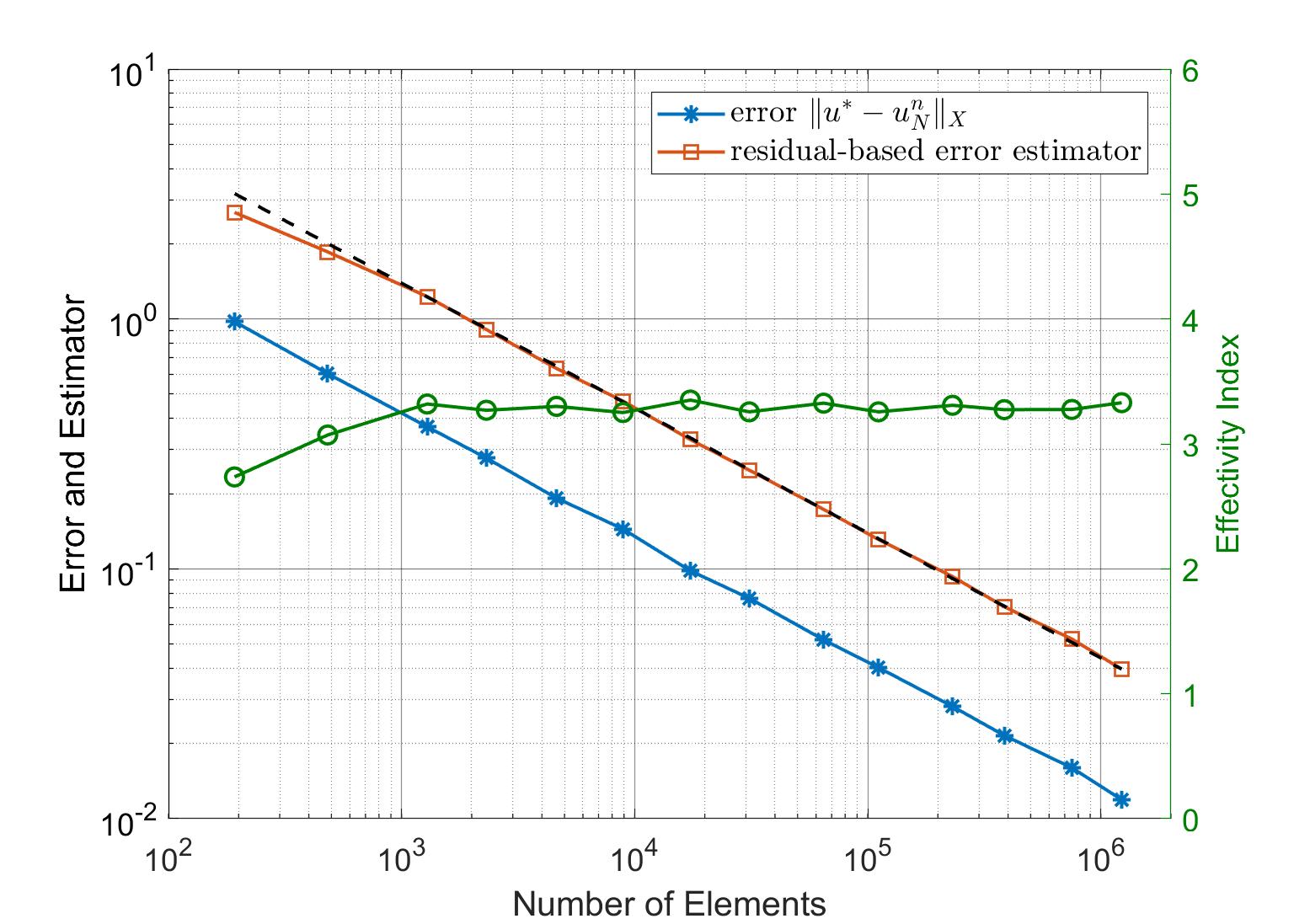}}
 \caption{Experiment~\ref{sec:smoothsolution}: Performance data for the error estimators from Theorem~\ref{thm:aposterioriLSCL} (left) and Theorem~\ref{thm:nonlinearV1} (right) for the Zarantonello, Ka\v{c}anov and Newton iterations.}
 \label{fig:smoothsolution}
\end{figure}

\subsubsection{Nonsmooth solution}\label{sec:nonsmoothsolution}
In our second experiment, we consider the nonlinear diffusion parameter $\mu(t)=1+\mathrm{e}^{-t}$, for $t \geq 0$. Again, it is easily seen that~$\mu$ satisfies the assumptions~\eqref{en:assmu} and~\eqref{en:assmu2}. We choose $g$ in \eqref{eq:operatorscl} such that the analytical solution is given by 
\begin{align} \label{eq:nonsmoothsolution}
u^\star(r,\varphi)=r^{\nicefrac{2}{3}}\sin\left(\nicefrac{2\varphi}{3}\right)(1-r \cos(\varphi))(1+r \cos(\varphi))(1- r \sin(\varphi))(1+r \sin(\varphi))\cos(\varphi),
\end{align}
where $r$ and $\varphi$ are polar coordinates. This is the prototype singularity for (linear) second-order elliptic problems with homogeneous Dirichlet boundary conditions in the L-shaped domain; in particular, we note that the gradient of~$u^\star$ is unbounded at the origin. As before, in Figure~\ref{fig:nonsmoothsolution}, we plot the error $\norm{u^\star-u_N^n}_X$, the error estimators from Theorems~\ref{thm:aposterioriLSCL} and~\ref{thm:nonlinearV1}, as well as the effectivity indices versus the number~$|\mathcal{T}_N|$ of elements in the mesh for each of the three iterative linearization schemes from Section~\ref{sec:ILFEM}. We let $\dpa=0.5$ for the Zarantonello iteration, and use the initial damping parameter $\dpa^{0}=1$ for the Newton method as in Experiment~\ref{sec:smoothsolution}. As before, we observe that optimal rates of convergence are attained in all six cases. 

\begin{figure}[htb] 
 \subfloat[Zarantonello iteration with the a posteriori error bound from Theorem~\ref{thm:aposterioriLSCL}.]{\includegraphics[width=0.499\textwidth]{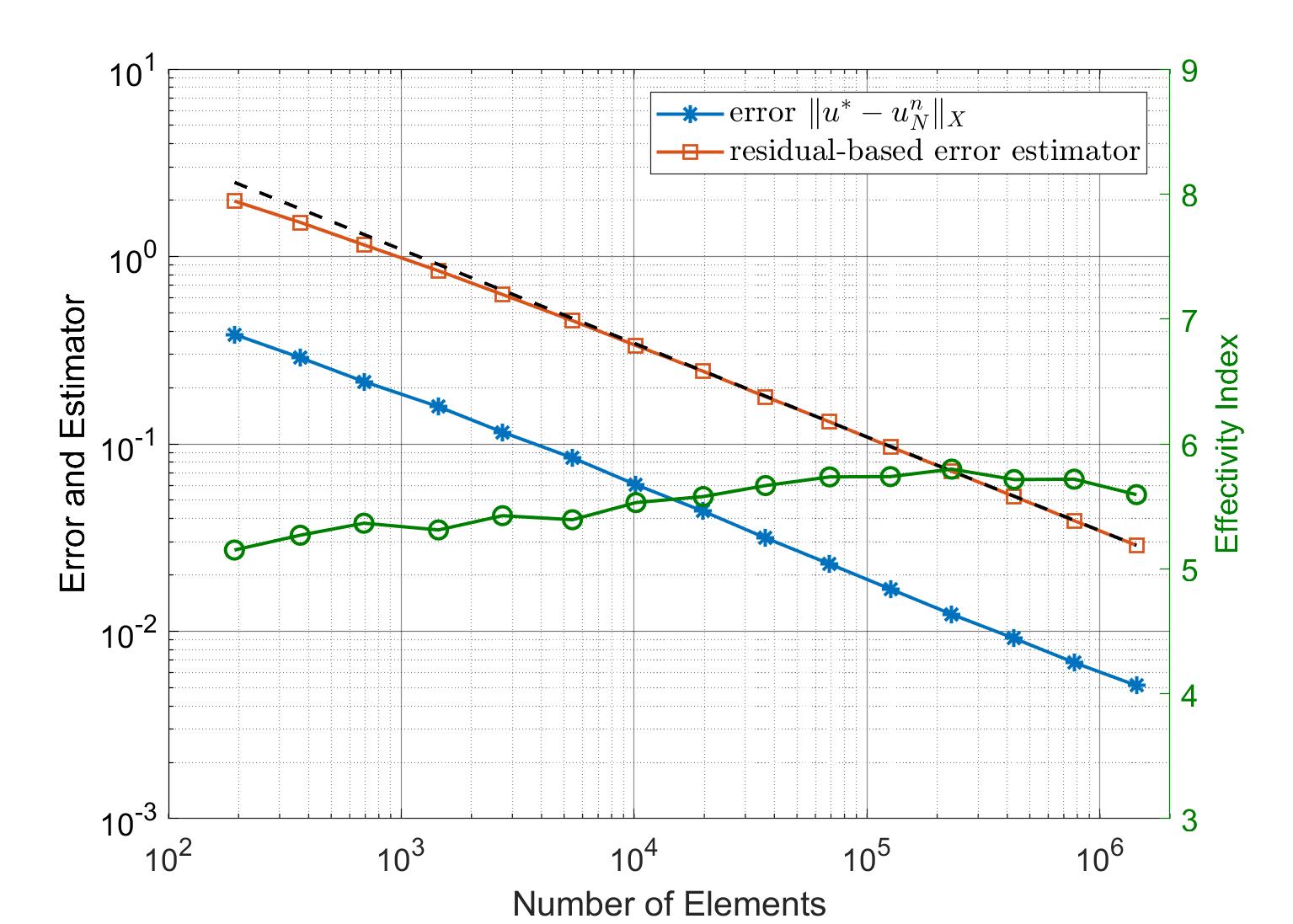}}\hfill
 \subfloat[Zarantonello iteration with the a posteriori error bound from Theorem~\ref{thm:nonlinearV1}.]{\includegraphics[width=0.499\textwidth]{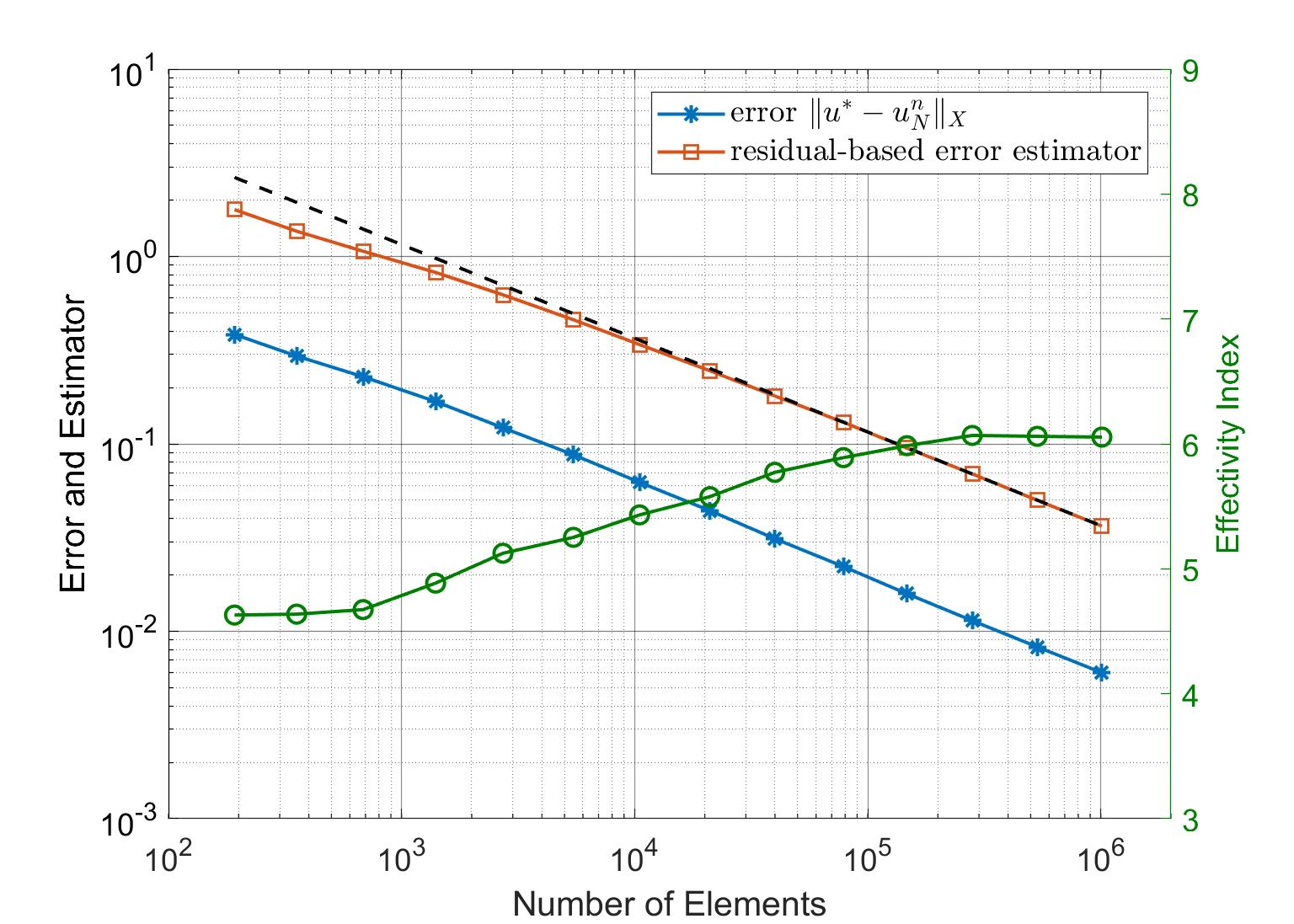}}\\[-1ex]
 \subfloat[Ka\v{c}anov iteration with the a posteriori error bound from Theorem~\ref{thm:aposterioriLSCL}.]{\includegraphics[width=0.499\textwidth]{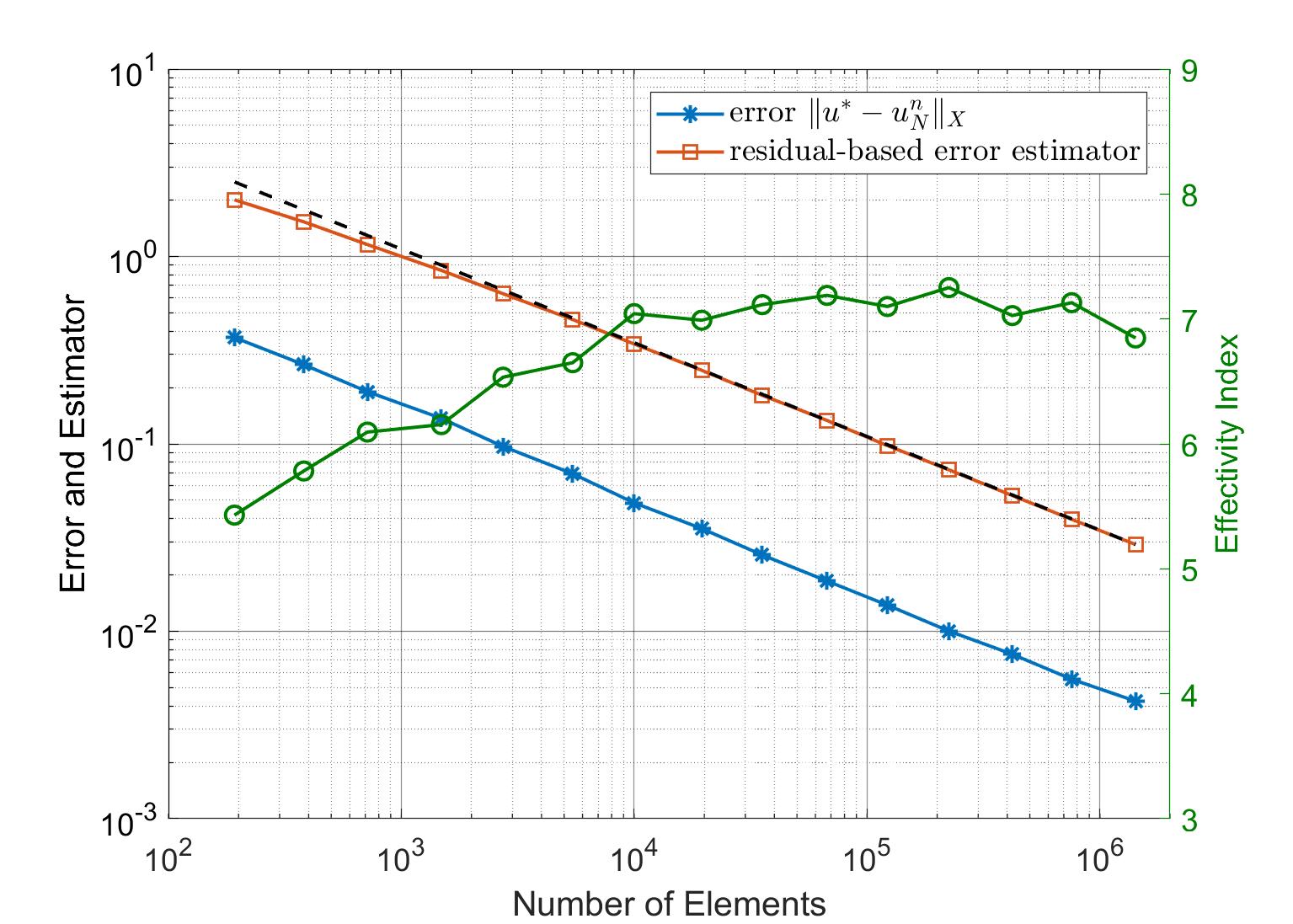}}\hfill 
 \subfloat[Ka\v{c}anov iteration with the a posteriori error bound from Theorem~\ref{thm:nonlinearV1}.]{\includegraphics[width=0.499\textwidth]{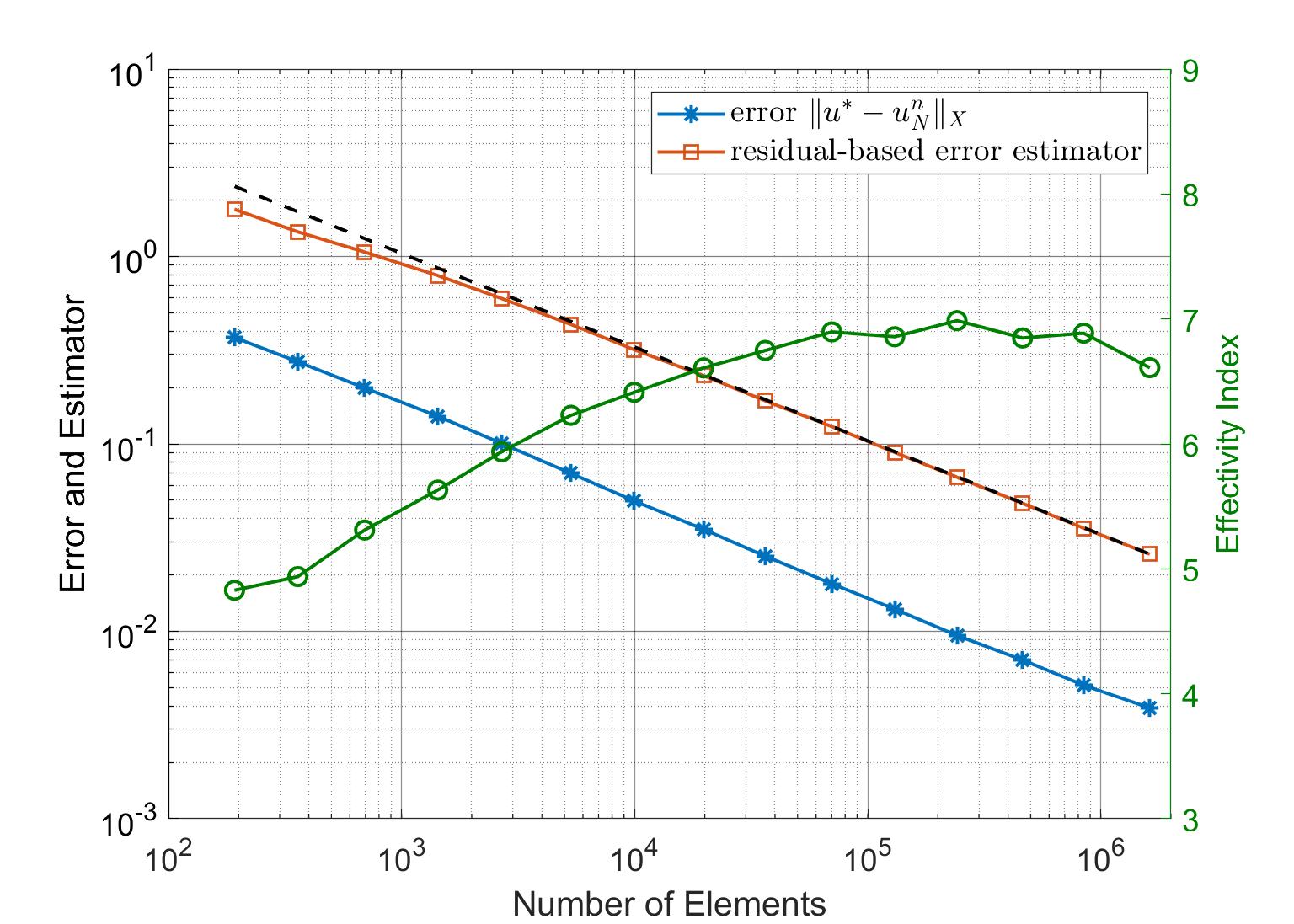}}\\[-1ex]
 \subfloat[Damped Newton iteration with the a posteriori error bound from Theorem~\ref{thm:aposterioriLSCL}.]{\includegraphics[width=0.499\textwidth]{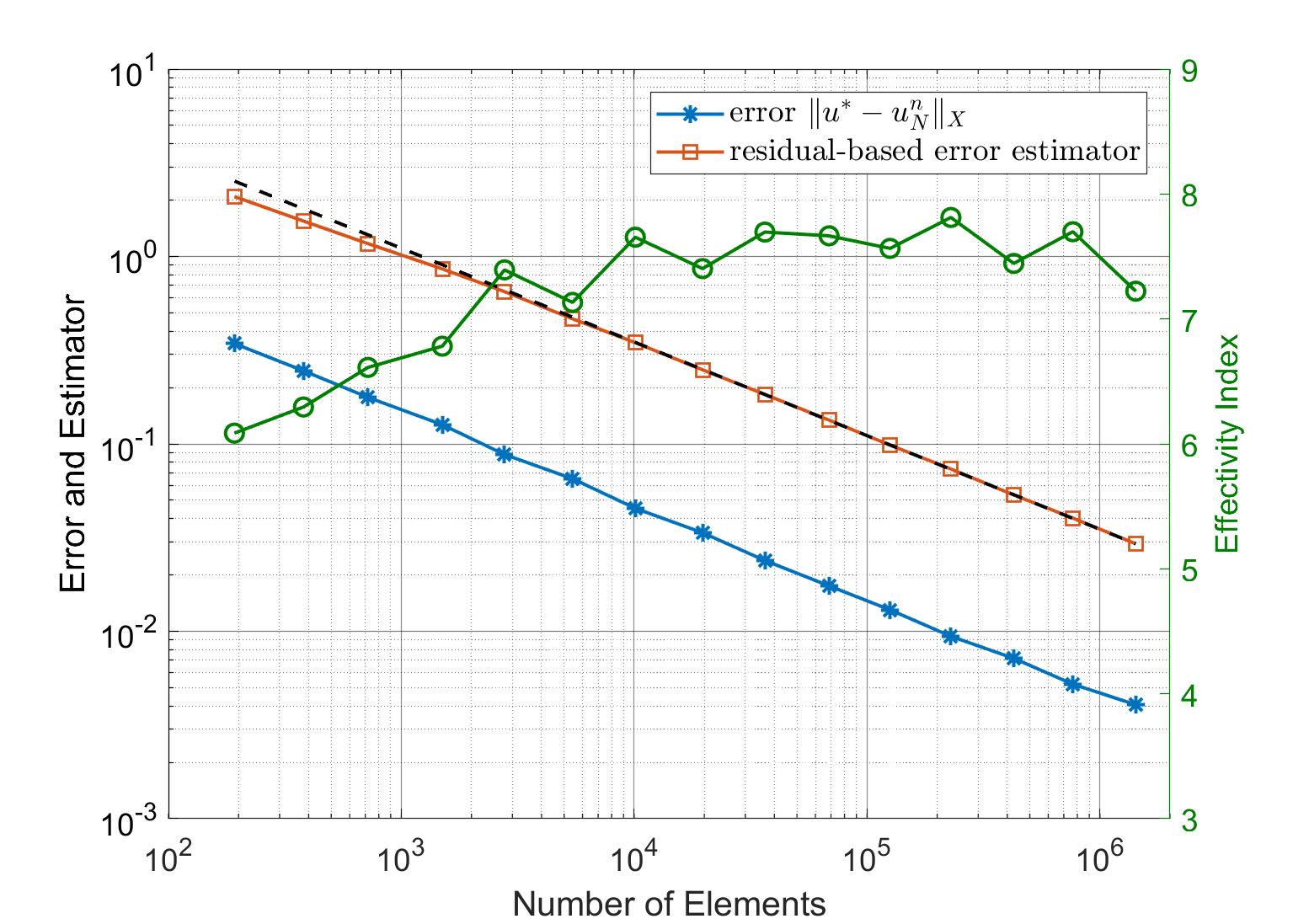}}\hfill
 \subfloat[Damped Newton iteration with the a posteriori error bound from Theorem~\ref{thm:nonlinearV1}.]{\includegraphics[width=0.499\textwidth]{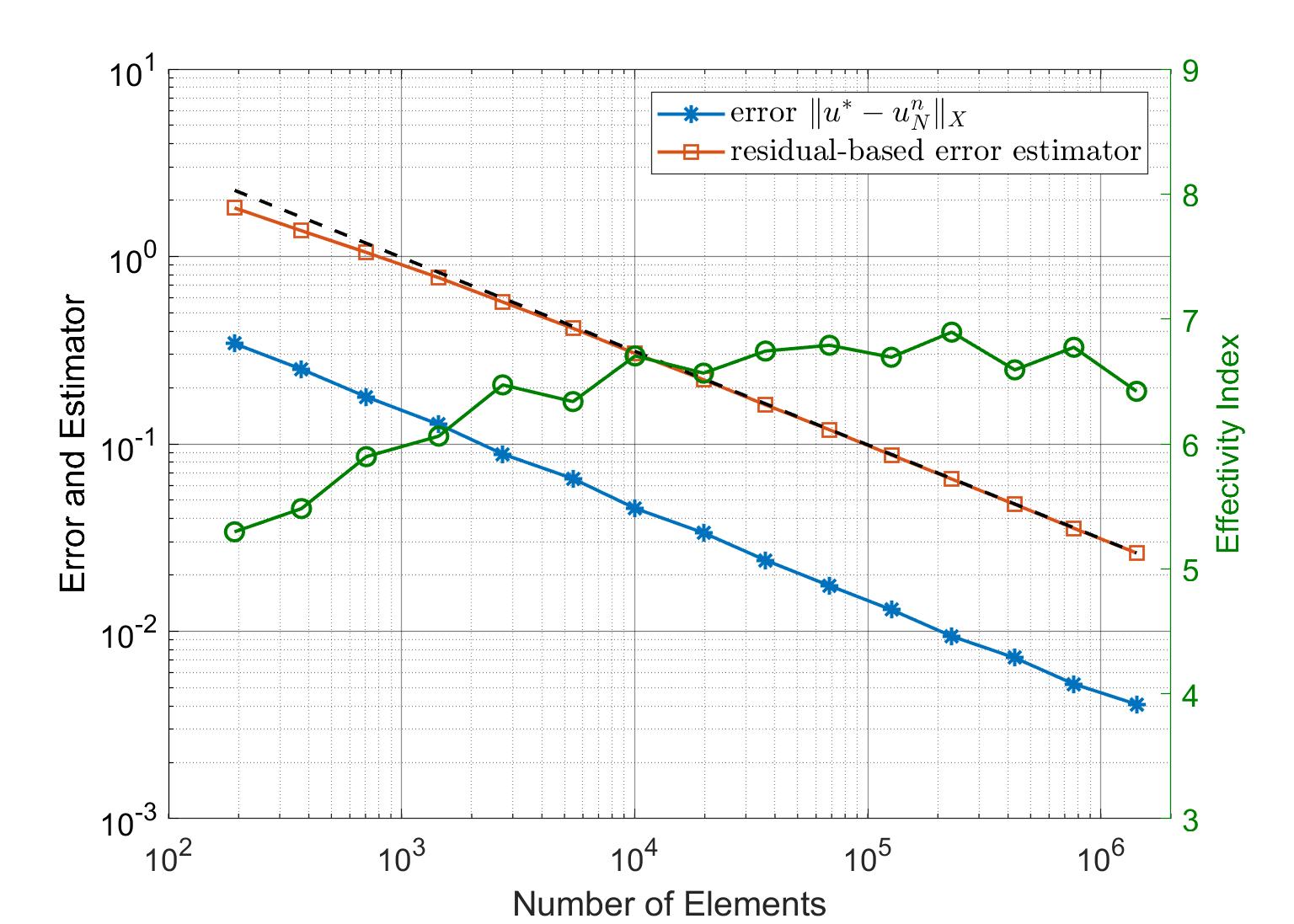}}
 \caption{Experiment~\ref{sec:nonsmoothsolution}: Performance data for the error estimators from Theorem~\ref{thm:aposterioriLSCL} (left) and Theorem~\ref{thm:nonlinearV1} (right) for the Zarantonello, Ka\v{c}anov and Newton iterations.}
 \label{fig:nonsmoothsolution}
\end{figure}

\subsubsection{Nonsmooth solution with monotone increasing diffusion} \label{sec:increasingdiffusion}
Finally, we consider the nonlinear diffusivity function~$\mu(t)=2-\mathrm{e}^{-t}$, for $t \geq 0$. Again, we choose $g$ in \eqref{eq:operatorscl} such that the analytical solution is given by the nonsmooth function~\eqref{eq:nonsmoothsolution}. Since $\mu$ is monotone increasing, it does not have the property \eqref{en:assmu2}, which is needed to guarantee the convergence of the Ka\v{c}anov iteration and of the damped Newton method. It still fulfills, however, the assumption \eqref{en:assmu}, which, in turn, is sufficient to guarantee the convergence of the Zarantonello method. In this experiment, we choose the damping parameter for the Zarantonello method to be $\dpa=0.4$, and the initial damping parameter in the Newton method to be $\dpa^{0}=1$. We see from the plots in Figure~\ref{fig:increasingdiffusion} that the Ka\v{c}anov and damped Newton methods converge, even with optimal order, which indicates that the property~\eqref{en:assmu2} does not seem to be necessary for the current example and the initial setup chosen here. We emphasize that this observation for the Ka\v{c}anov method was already made in \cite{GarauMorinZuppa:11}.

\begin{figure}[htb] 
 \subfloat[Zarantonello iteration with the a posteriori error bound from Theorem~\ref{thm:aposterioriLSCL}.]{\includegraphics[width=0.499\textwidth]{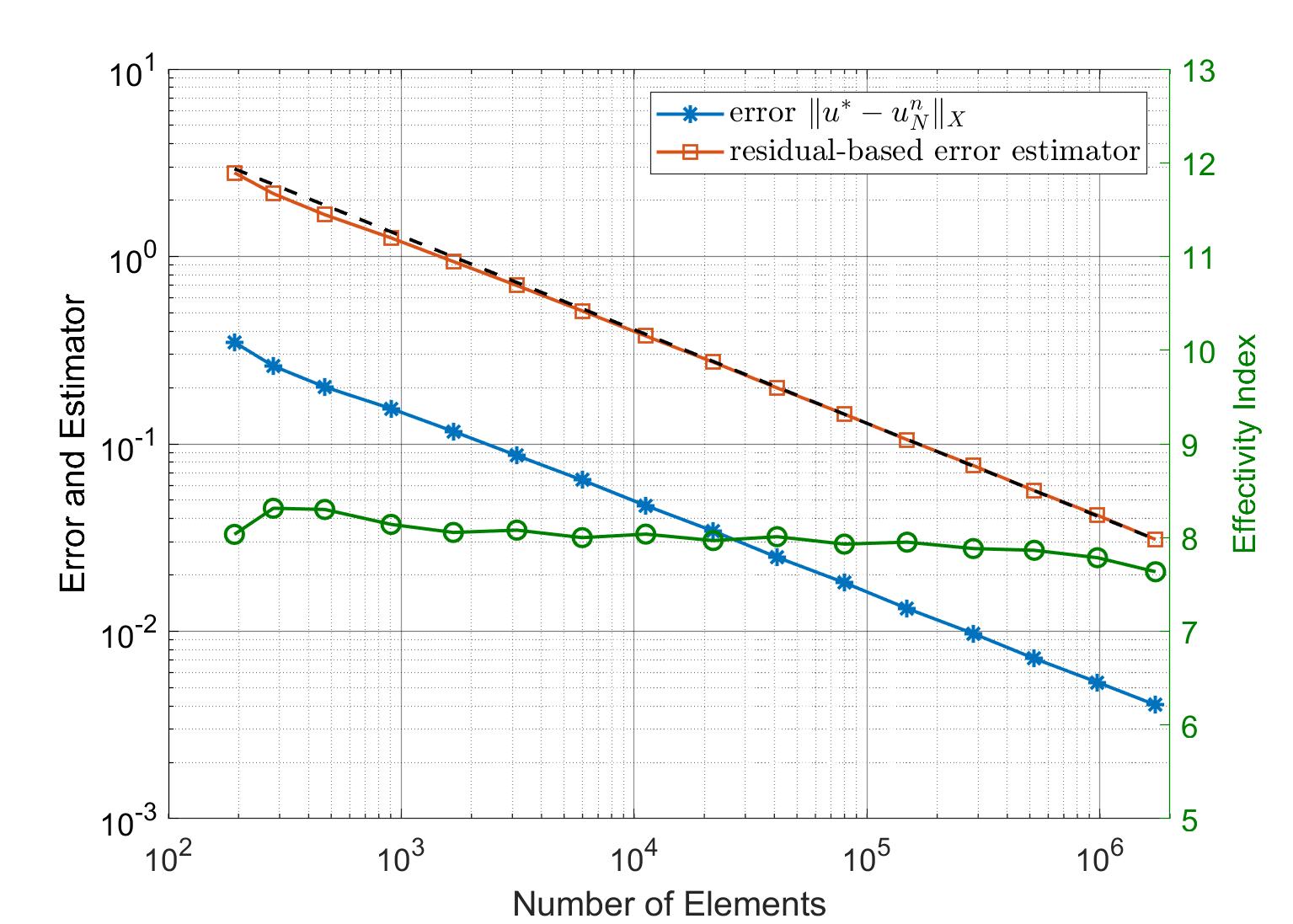}}\hfill
 \subfloat[Zarantonello iteration with the a posteriori error bound from Theorem~\ref{thm:nonlinearV1}.]{\includegraphics[width=0.499\textwidth]{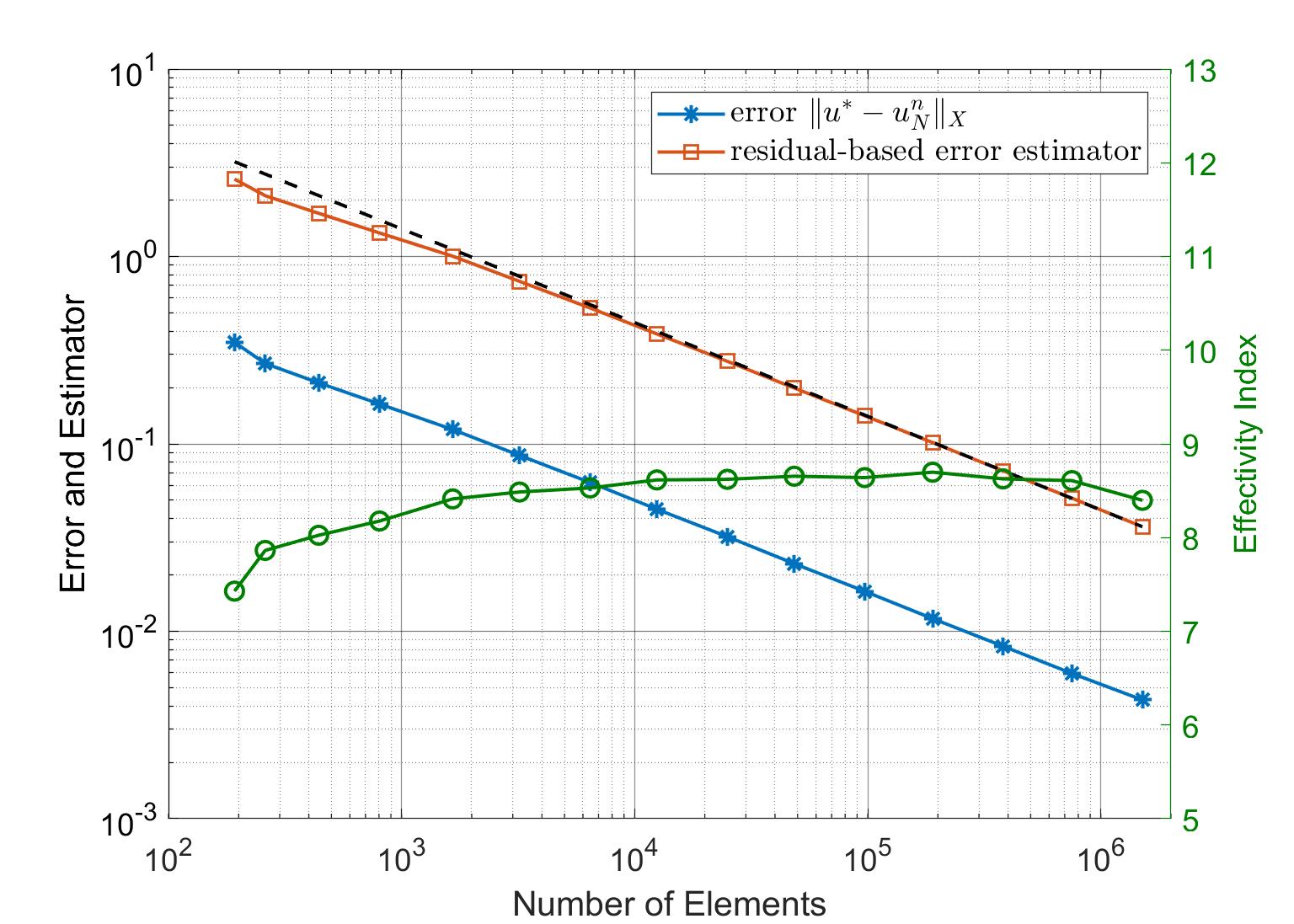}}\\[-1ex]
 \subfloat[Ka\v{c}anov iteration with the a posteriori error bound from Theorem~\ref{thm:aposterioriLSCL}.]{\includegraphics[width=0.499\textwidth]{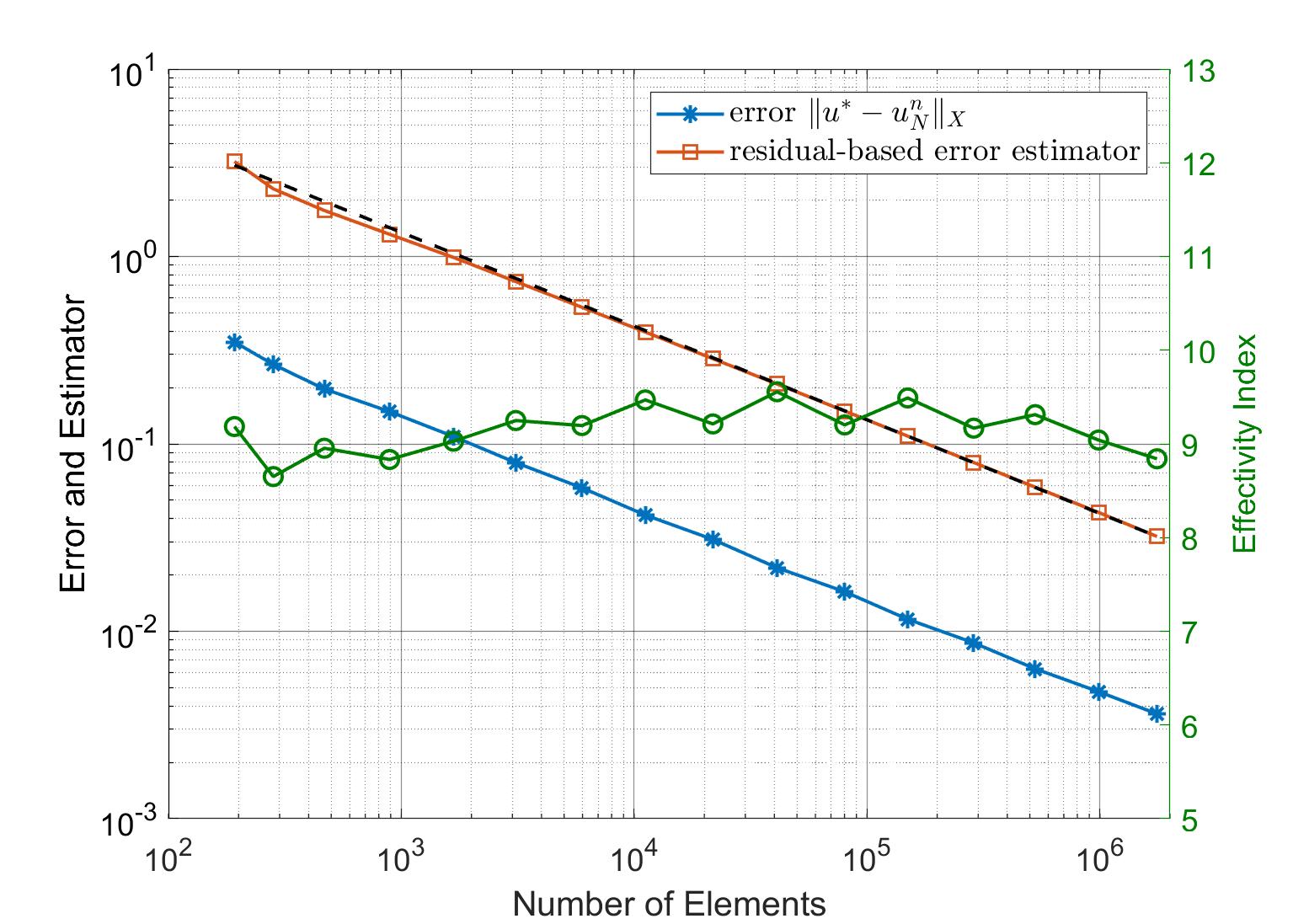}}\hfill 
 \subfloat[Ka\v{c}anov iteration with the a posteriori error bound from Theorem~\ref{thm:nonlinearV1}.]{\includegraphics[width=0.499\textwidth]{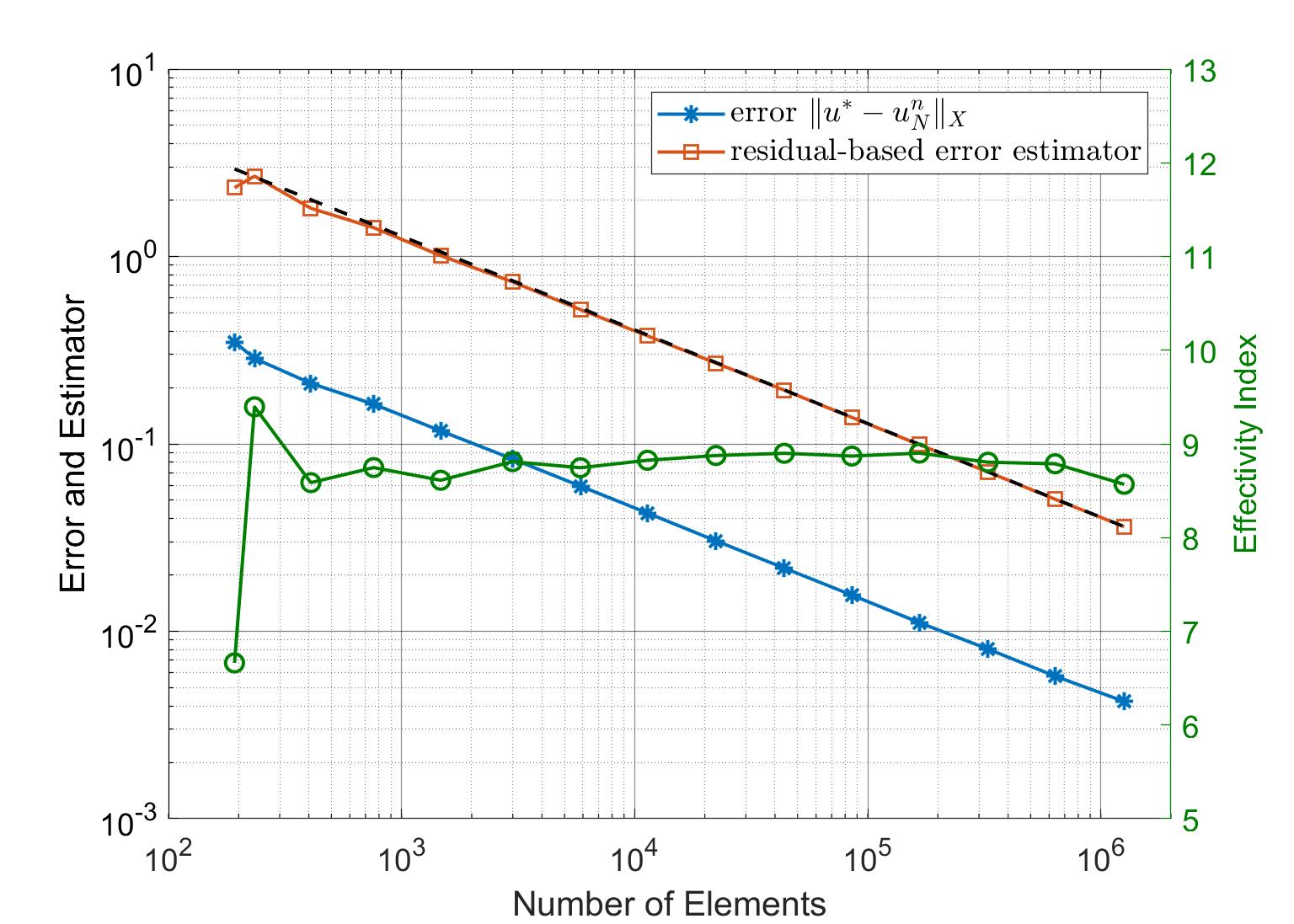}}\\[-1ex]
 \subfloat[Damped Newton iteration with the a posteriori error bound from Theorem~\ref{thm:aposterioriLSCL}.]{\includegraphics[width=0.499\textwidth]{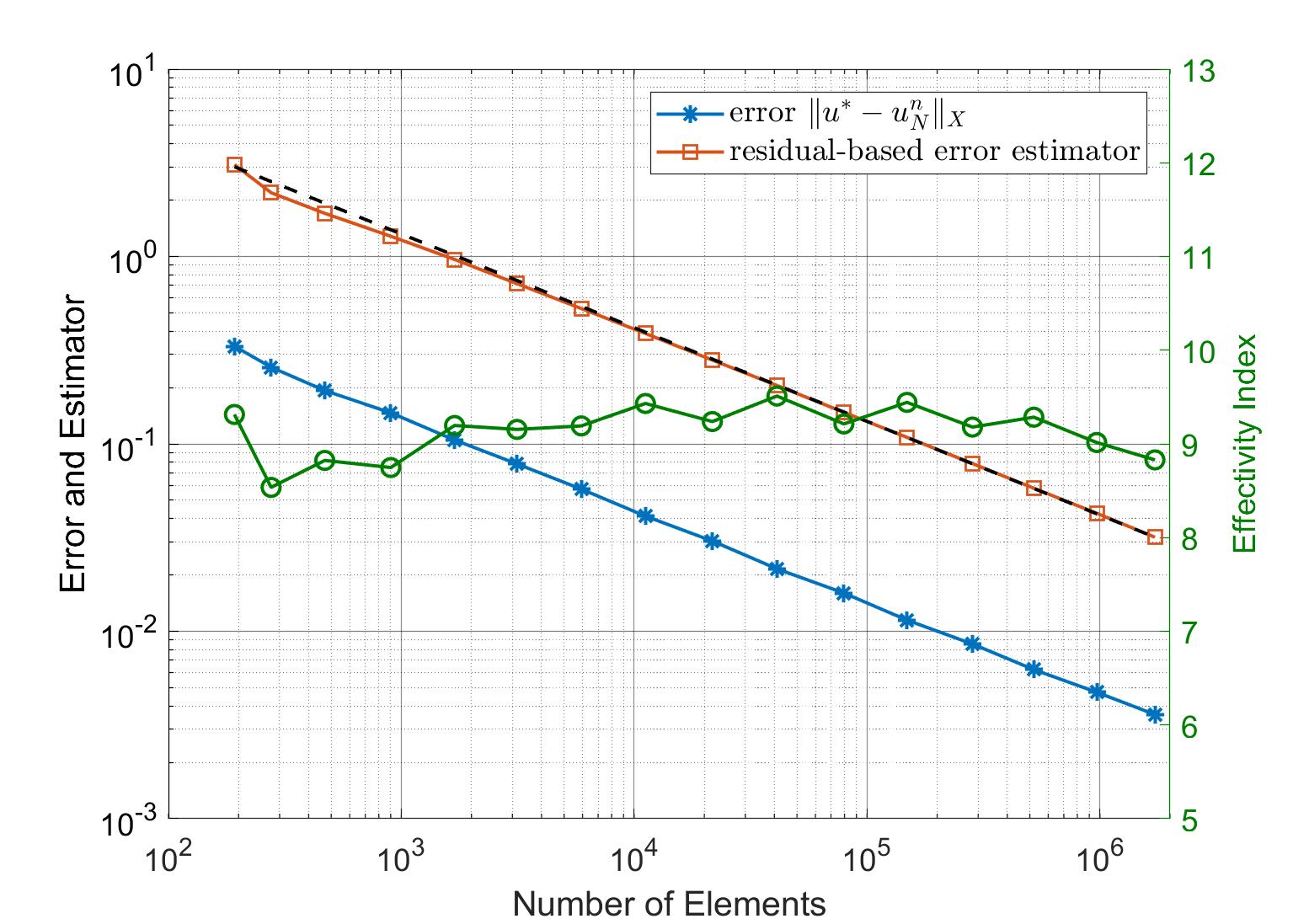}}\hfill
 \subfloat[Damped Newton iteration with the a posteriori error bound from Theorem~\ref{thm:nonlinearV1}.]{\includegraphics[width=0.499\textwidth]{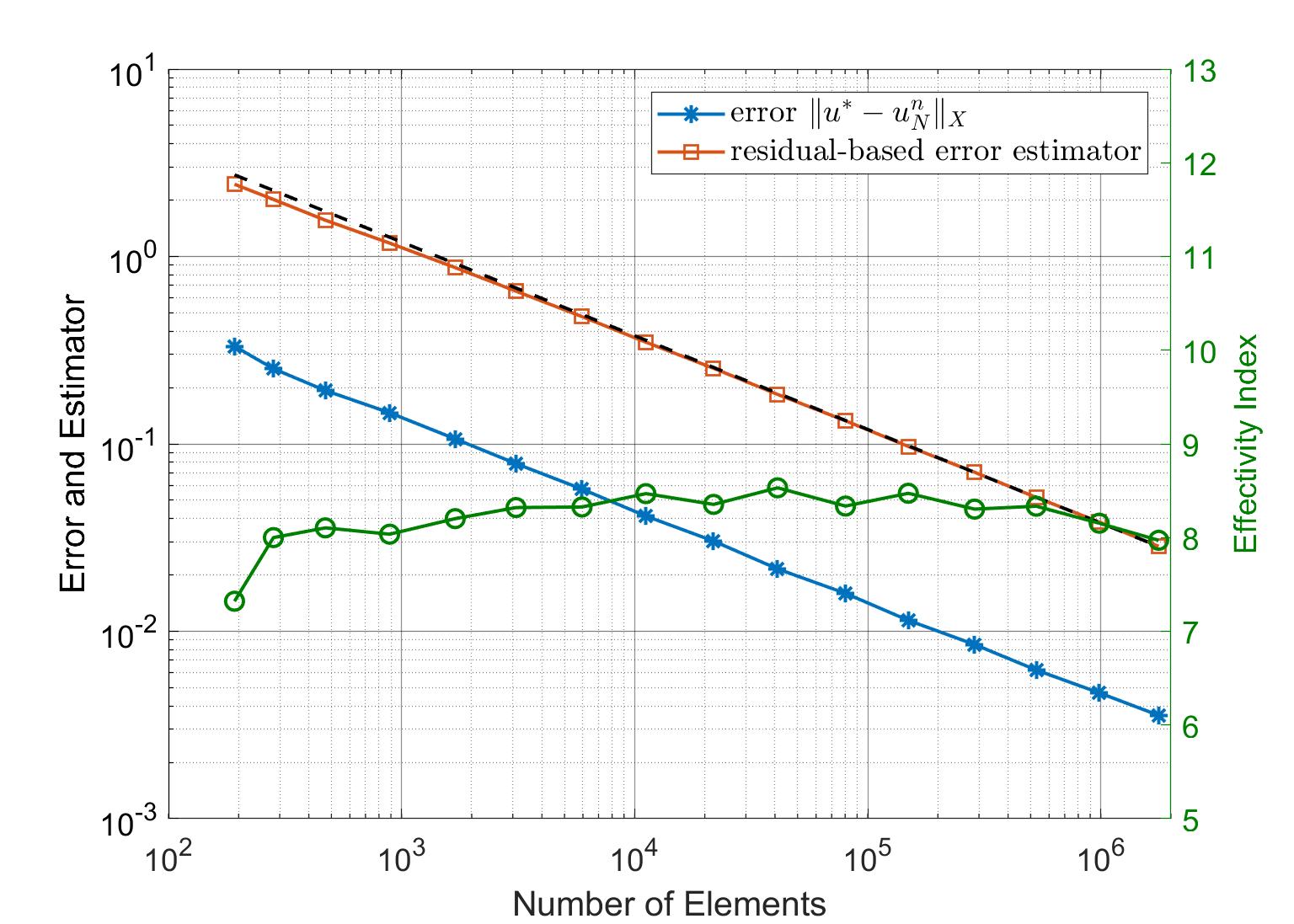}}
 \caption{Experiment~\ref{sec:increasingdiffusion}: Performance data for the error estimators from Theorem~\ref{thm:aposterioriLSCL} (left) and Theorem~\ref{thm:nonlinearV1} (right) for the Zarantonello, Ka\v{c}anov and Newton iterations.}
 \label{fig:increasingdiffusion}
\end{figure}

\bibliographystyle{amsplain}
\bibliography{references}
\end{document}